\documentclass[a4paper, 11pt]{article}

\usepackage{amsmath,amsthm,amssymb,enumerate}
\usepackage[margin=2cm]{geometry}
\usepackage{tabularx}
\usepackage{xspace}
\usepackage{mathrsfs}

\usepackage[hidelinks,bookmarksnumbered,bookmarksopen=true,]{hyperref}
\usepackage[capitalise]{cleveref}

\theoremstyle{plain}
\newtheorem{theorem}{Theorem}

\newtheorem{conjecture}[theorem]{Conjecture}
\newtheorem{corollary}[theorem]{Corollary}
\newtheorem{definition}[theorem]{Definition}

\newtheorem{lemma}[theorem]{Lemma}

\newtheorem{proposition}[theorem]{Proposition}

\newtheorem{observation}[theorem]{Observation}

\theoremstyle{definition}
\newtheorem{claim}{Claim}
\newtheorem*{claim*}{Claim}

\newenvironment{pocd}[1]{\begin{proof}[Proof of {C}laim #1.]}{\end{proof}}

\DeclareMathOperator{\lrwd}{\mathsf{lrwd}}

\DeclareMathOperator{\rk}{\mathsf{rk}}
\DeclareMathOperator{\cutrk}{\mathsf{cutrk}}
\DeclareMathOperator{\rep}{\mathsf{rep}}
\DeclareMathOperator{\Twin}{\mathsf{Twin}}
\DeclareMathOperator{\Rep}{\mathsf{\mathcal{R}}}
\DeclareMathOperator{\paths}{\mathsf{paths}}
\DeclareMathOperator{\lettericity}{\mathsf{letr}}

\def\ie{i.e.\xspace}
\def\wrt{\emph{w.r.t.}\xspace}
\def\eg{\emph{e.g.}\xspace}

\def\mso{\texttt{MSO}\xspace}

\def\edg{\mathsf{edg}}

\def\cA{\mathcal{A}}

\def\cC{\mathcal{C}}

\def\Ind{\mathsf{Ind}}

\def\compl#1{\overline{#1}}

\newcommand{\CG}{\mathsf{CG}}
\newcommand{\alphabet}{\Sigma}

\DeclareMathOperator{\inter}{int}

\title{Lettericity of graphs: \\ an FPT algorithm and a bound on the size of obstructions}
\author{Bogdan Alecu \and Mamadou Moustapha Kant\'{e} \and Vadim Lozin  \and Viktor Zamaraev}


\date{}

\begin{document}
	
\maketitle

\begin{abstract}
Lettericity is a graph parameter responsible for many attractive structural properties. 
In particular, graphs of bounded lettericity have bounded linear clique-width and they are 
well-quasi-ordered by induced subgraphs. The latter property implies that any hereditary class 
of graphs of bounded lettericity can be described by finitely many forbidden induced subgraphs.
This, in turn, implies,  in a non-constructive way,  polynomial-time recognition of such classes.
However, no constructive algorithms and no specific bounds on the size of forbidden graphs 
are available up to date. In the present paper, we develop an algorithm that recognizes 
$n$-vertex graphs of lettericity at most $k$ in time $f(k)n^3$ and show that any minimal graph 
of lettericity more than $k$ has at most $2^{O(k^2\log k)}$ vertices. 
\end{abstract}

 \section{Introduction}
 
 Lettericity is a graph parameter introduced  by Petkov\v sek \cite{Petkovsek2002} in 2002.
 This notion remained unnoticed until 2011, and since then it has attracted much attention 
 in the literature due to several reasons. One of them is that graphs of
 bounded lettericity are well-quasi-ordered by the induced subgraph relation, which is not 
 the case for general graphs. An additional reason for the interest in lettericity is its relation to 
 other notions within graph theory and beyond. In particular, when restricted to permutation 
 graphs, bounded lettericity becomes equivalent to geometric griddability of permutations, as 
 was shown in \cite{AlecuFKLVZ2022}. 
 
 In the world of graph parameters, lettericity is situated between neighbourhood diversity
 and linear clique-width in the sense that bounded neighbourhood diversity implies bounded
 lettericity, which in turn implies bounded linear clique-width, but neither of these implications
 is reversible in general. Graphs of bounded neighbourhood diversity have a simple structure,
 which leads to an easy computation of this parameter. However, computing linear clique-width
 is NP-complete. More specifically, given a graph $G$ and a positive integer $k$, it is NP-complete
 to decide whether the linear clique-width of $G$ is at most $k$ \cite{CW}.
 On the other hand, for a fixed value of $k$, the computational complexity of recognizing 
 graphs of linear clique-width at most $k$ is an open problem for all $k\ge 4$ \cite{LCW3}. 

 For lettericity, the latter problem is known to be polynomial-time solvable, which follows from 
 the well-quasi-orderability of graphs of bounded lettericity under the induced subgraph relation.
 This property implies that, for each $k$, the class of graphs of lettericity at most $k$ can be
 described by a finite set of minimal forbidden induced subgraphs (obstructions). However, no
 constructive algorithms to solve the problem in polynomial time and no specific bounds on the size
 of obstructions are available to date. In the present paper, we fix this issue and present 
 the first polynomial-time algorithm to recognize graphs of lettericity at most $k$, for each 
 fixed value of $k$. Moreover, we show that this recognition problem is fixed-parameter tractable,
 when parameterized by $k$. We also provide an upper bound on the size of obstructions. 
 
 The organization of the paper is as follows. In Section~\ref{sec:pre}, we introduce basic
 terminology and notation used in the paper. In Section~\ref{subsec:mso-definability}, we show 
 that bounded lettericity is an MSO-definable property and derive from this conclusion 
 an implicit bound on the size of obstructions and the existence of an FPT algorithm that
 recognises $n$-vertex graphs of lettericity at most $k$ in time $f(k)n^3$. An alternative dynamic programming algorithm which does not use Courcelle's algorithm (and with a better function $f(k)$ than the one from Section~\ref{subsec:mso-definability}) can be found in the appendix.  
 In Section~\ref{sec:size-obs}, we obtain an explicit bound on the size of obstructions 
 by showing that any minimal graph of lettericity more than $k$ has at most $2^{O(k^2\log k)}$
 vertices. Section~\ref{sec:con} concludes the paper with a number of open questions.
	
	\section{Preliminaries}
    \label{sec:pre}

	
	\paragraph{Common notation.}
	We denote by $\mathbb{N}$ the set of non-negative integers, and for a positive integer $n$, we let $[n]$ be the set $\{1,\ldots, n\}$. If $M$ is a matrix
	with rows and columns indexed by a set $V$, and $X,Y\subseteq V$, we denote by $M[X,Y]$ the sub-matrix of $M$ whose rows and columns are indexed, respectively,
	by $X$ and $Y$. A \emph{partition} of a set $V$ is a collection of disjoint non-empty subsets of $V$ that cover $V$, and the elements of a partition are
	called \emph{blocks}.  An \emph{ordered partition} of $V$ is a sequence $(V_1, \ldots, V_k)$, for some positive integer $k$, such that $\{V_1,\ldots, V_k\}$ is
	a partition of $V$. For an ordered partition $\bar{V}=(V_1,\ldots,V_k)$ of $V$, let $\ell_{\bar{V}}:V\to [k]$ be the function such that $\ell_{\bar{V}}(x)=i$ if $x\in V_i$.
	
	For a directed or undirected graph $G$, we denote by $V(G)$ the vertex set of $G$.
	For an undirected graph $G$, the set of its edges is denoted by $E(G)$. If two vertices $x, y$ of $G$ are connected by an edge, we denote this edge as $xy$ or $yx$.
	The neighbourhood of a vertex $x$ in $G$ is denoted by $N_G(x)$, or $N(x)$ whenever $G$ is clear from the context. 
	A vertex $x \in V(G)$ \emph{distinguishes} vertices $y,z \in V(G)$ if it is adjacent to one of them
	and is not adjacent to the other; otherwise, $x$ does not distinguish $y$ and $z$.
	A connected component of $G$ is a maximal connected subgraph of $G$.
	For disjoint subsets $X$ and $Y$ of $V(G)$, we let $G[X]$ and
	$G[X,Y]$ be, respectively, the subgraphs $(X,(X\times X)\cap E(G))$ and $(X\cup Y,(X\times Y)\cap E(G))$. The complement of a graph $G$, denoted by $\overline{G}$, is the graph with vertex set $V(G)$, and
	$xy\in E(\overline{G})$ if and only if $xy\notin E(G)$. If $G$ is a bipartite graph, then we denote it by $(X,Y,E)$ where $\{X,Y\}$ is the bipartition of its
	vertex set into two independent sets. A bipartite graph $G=(X,Y,E)$ is a \emph{chain graph} if $X$ (equivalently $Y$) admits a linear ordering $x_1,x_2,...,x_n$
	such that $N_G(x_n)\subseteq N_G(x_{n-1}) \subseteq \cdots \subseteq N_G(x_2)\subseteq N_G(x_1)$; and $G$ is \emph{homogeneous} if it is either complete bipartite or edgeless.

	For a \emph{directed graph} $G$, the set of its directed edges, aslo called \emph{arcs}, which are ordered pairs of vertices, is denoted as $A(G)$. 
	An arc from $x$ to $y$ is denoted as $(x,y)$.
	For a vertex $x$, its \emph{in-neighbour set} is  the set $N_G^-(x):=\{y\mid (y,x)\in A(G)\}$ and its \emph{out-neighbour set} is $N_G^+(x)=\{y\mid (x,y)\in
	A(G)\}$. A \emph{circuit} in a directed graph $G$ is a sequence $(x_1,\ldots,x_p)$ of distinct vertices such that $(x_i,x_{i+1})\in A(G)$, for all $1\leq i \leq
	p-1$, and $(x_p,x_1)\in A(G)$. 
	A linear ordering $\leq$ of the vertices of a directed graph $G$ is called a \emph{topological ordering of $G$} if for every directed edge $(x,y)$ it holds that $x \leq y$.
	The \emph{underlying} graph of $G$ is the undirected graph obtained from $G$ by first removing edge orientations, and then removing multiple edges and loops, if any.
	A connected component in a directed graph $G$ is the directed subgraph of $G$ induced by the vertex set of a connected component of the underlying graph of $G$.
	
	For a directed or undirected graph $G$ and a set of vertices $S \subseteq V(G)$, we denote by $G[S]$ and $G\setminus S$ the subgraphs of $G$ induced by $S$ and $V(G)\setminus S$, respectively.
	
	A \emph{class of graphs}, also known as a \emph{graph property}, is a set of graphs closed under isomorphism.
	A graph class is \emph{hereditary} if it is closed under removing vertices.
	Any hereditary class can be defined by a finite or infinite set of \emph{obstructions},
	i.e.\ minimal graphs that do not belong to the class. More precisely, a graph $H$ is an obstruction for a hereditary class $\mathcal{C}$ if $H \not\in \mathcal{C}$, but $H\setminus \{v\} \in \mathcal{C}$ for every vertex $v$ of $H$.


	\paragraph{Letter graphs.}
	Let $\alphabet$ be a finite alphabet. A directed graph $D = (\alphabet, A)$ is called a \textit{decoder}
	over the alphabet $\alphabet$.
	An $n$-vertex graph $G=(V,E)$ is called a \textit{letter graph over decoder} $D=(\alphabet,A)$ if there exists
	a \textit{letter assignment} $\ell : V \rightarrow \alphabet$ and an \textit{ordering} $c: V \rightarrow [n]$
	of the vertices of $G$ such that two vertices $x,y \in V$ are adjacent in $G$
	if and only if:
	\begin{enumerate}
		\item either $(\ell(x), \ell(y)) \in A$ and $c(x) < c(y)$, 
		\item or $(\ell(y), \ell(x)) \in A$ and $c(y) < c(x)$.
	\end{enumerate}
	We say that the pair $(\ell, c)$ is a \textit{letter realisation} of $G$
	over decoder $D$, and that $G$ is a letter graph over decoder $D$. 
	If $|\alphabet| = k$, then $(\ell, c)$ is called a \emph{$k$-letter realisation} of $G$.
	A graph $G$ is a \emph{$k$-letter graph} if it admits a $k$-letter realisation.
	The \emph{lettericity} $\lettericity(G)$ of $G$ is the minimum $k$ such that $G$ is a $k$-letter graph. 
	
	For a letter realisation $(\ell,c)$ of an $n$-vertex graph $G$ over $D=(\Sigma,A)$, we denote by $w(\ell,c)$ the word $w_1w_2\cdots w_n$ over $\Sigma$ with
	$w_i=\ell(c^{-1}(i))$. 
	A \emph{word realisation of $G$ over $D$} is any word $w$ such that 
	$w=w(\ell,c)$ for some letter realisation $(\ell,c)$ of $G$ over $D$.

	We denote by ${\cal L}_D$ the class of all letter graphs over decoder $D$, and by
	${\cal L}_k$ we denote the class of all $k$-letter graphs, that is
	\begin{equation}\label{eq:union}
		{\cal L}_k = \bigcup_{D=(\alphabet,A) : |\alphabet| \leq k} {\cal L}_D.
	\end{equation}
	
	\noindent
	Two decoders $D_1$ and $D_2$ are \textit{equivalent} if ${\cal L}_{D_1} ={\cal L}_{D_2}$.
	In this case we write $D_1 \sim D_2$.
	In a directed graph, two vertices are called \emph{twins} if they have the same sets of in- and out-neighbours.
	The following observation is straightforward 
	
	\begin{observation}\label{obs:twin-decoder}
		Let $D=(\alphabet,A)$ be a decoder, and let $x,y \in \alphabet$ be twins in $D$. Then $D \sim D \setminus \{x\}$.
	\end{observation}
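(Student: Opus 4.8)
The plan is to prove the two inclusions $\mathcal{L}_{D\setminus\{x\}}\subseteq\mathcal{L}_D$ and $\mathcal{L}_D\subseteq\mathcal{L}_{D\setminus\{x\}}$ separately. The first is immediate: a letter realisation $(\ell,c)$ of a graph $G$ over $D\setminus\{x\}$ is also a letter realisation of $G$ over $D$, since it never uses the letter $x$ and the adjacency conditions only refer to arcs between letters of $\Sigma\setminus\{x\}$, which are arcs of $D$ as well. So the content is in the reverse inclusion.

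For that, given a letter graph $G$ with letter realisation $(\ell,c)$ over $D$, I would define a new letter assignment $\ell'$ by $\ell'(v)=y$ whenever $\ell(v)=x$ and $\ell'(v)=\ell(v)$ otherwise, keeping the same ordering $c$; the claim is that $(\ell',c)$ is a letter realisation of the \emph{same} graph $G$ over $D\setminus\{x\}$. First I would record the consequences of $x$ and $y$ being twins: from $N_D^-(x)=N_D^-(y)$ and $N_D^+(x)=N_D^+(y)$ one gets, for every letter $b\neq x$, that $(x,b)\in A\iff (y,b)\in A$ and $(b,x)\in A\iff (b,y)\in A$; instantiating these equivalences at $b\in\{x,y\}$ further shows that the four pairs $(x,x)$, $(x,y)$, $(y,x)$, $(y,y)$ are either all in $A$ or all absent from $A$.

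Then I would verify the adjacency condition for an arbitrary pair of vertices $u,v$ with $c(u)<c(v)$ by a short case analysis according to whether $\ell(u)$ and/or $\ell(v)$ equals $x$. In each case the equivalences above allow us to replace an occurrence of $x$ by $y$ in the relevant arc without affecting its membership in $A$, and the resulting arc, having both endpoints in $\Sigma\setminus\{x\}$, lies in $A$ if and only if it lies in the arc set of $D\setminus\{x\}$; hence $uv\in E(G)$ if and only if the defining condition for $(\ell',c)$ over $D\setminus\{x\}$ holds. This yields $G\in\mathcal{L}_{D\setminus\{x\}}$ and finishes the proof. There is no real obstacle here; the only mild subtlety is that a decoder may carry loops, so the case $\ell(u)=\ell(v)=x$ genuinely occurs and is precisely where one needs the observation that the four pairs among $\{x,y\}$ behave alike.
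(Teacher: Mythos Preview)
Your argument is correct and is exactly the natural verification the paper has in mind; the paper does not spell out a proof but simply calls the observation ``straightforward.'' One tiny cosmetic point: the equivalences $(x,b)\in A\iff(y,b)\in A$ and $(b,x)\in A\iff(b,y)\in A$ actually hold for \emph{all} $b\in\Sigma$ (the restriction $b\neq x$ is unnecessary), so there is no tension in then instantiating them at $b=x$.
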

	
	Due to this observation, where convenient, we will consider twin-free decoders without explicitly referring to the observation.
	The next lemma can be derived from the definition of lettericity, and proofs can be found in \cite{Petkovsek2002}.

	\begin{lemma}[\cite{Petkovsek2002}]\label{lem:chain-graph} Let $G$ be an undirected graph.
		\begin{enumerate}
			\item For every letter realisation $(\ell,c)$ of $G$ over a decoder $D=(\alphabet,A)$,
			and any two distinct $a,b\in \alphabet$,
			\begin{itemize}
				\item $\ell^{-1}(a)$ is either a clique or an independent set in $G$;
				\item $G[\ell^{-1}(a),\ell^{-1}(b)]$ is a chain graph.
			\end{itemize} 
			\item For every $x \in V(G)$, $\lettericity(G\setminus \{x\}) \leq \lettericity(G) \leq 2\cdot \lettericity(G\setminus \{x\})+1$.
			\item $\lettericity(\overline{G})=\lettericity(G)$. 
		\end{enumerate}
	\end{lemma}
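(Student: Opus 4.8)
The plan is to verify each of the three items directly from the definition of a letter realisation; items~1 and~3 are pure bookkeeping, while item~2 needs one small construction. Fix a letter realisation $(\ell,c)$ of $G$ over $D=(\Sigma,A)$. For item~1, if $x,y\in\ell^{-1}(a)$ and, say, $c(x)<c(y)$, then $x$ and $y$ are adjacent iff $(a,a)\in A$, a condition independent of the pair chosen; so $\ell^{-1}(a)$ is a clique if $(a,a)\in A$ and an independent set otherwise. For distinct $a,b$: if both $(a,b)$ and $(b,a)$ lie in $A$ then $G[\ell^{-1}(a),\ell^{-1}(b)]$ is complete bipartite, and if neither does it is edgeless; in both cases it is homogeneous, hence a chain graph. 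Otherwise exactly one arc is present, say $(a,b)\in A$ and $(b,a)\notin A$ (the symmetric case is analogous); then for $y\in\ell^{-1}(b)$ we have $N_G(y)\cap\ell^{-1}(a)=\{x\in\ell^{-1}(a):c(x)<c(y)\}$, and these sets are linearly ordered by inclusion (ordering $\ell^{-1}(b)$ by $c$), which is exactly the chain-graph condition.

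For item~2, the lower bound is immediate: given a $k$-letter realisation $(\ell,c)$ of $G$, restricting $\ell$ to $V(G)\setminus\{x\}$ and composing $c$ with the order-isomorphism $[n]\setminus\{c(x)\}\to[n-1]$ yields a realisation of $G\setminus\{x\}$ over the same decoder, so $\lettericity(G\setminus\{x\})\le\lettericity(G)$. For the upper bound, let $(\ell,c)$ be a $k$-letter realisation of $G\setminus\{x\}$ over $D=(\Sigma,A)$ with $|\Sigma|=k$. Define a decoder $D'$ on the alphabet $\{a^{+},a^{-}:a\in\Sigma\}\cup\{z\}$ by putting $(a^{\varepsilon},b^{\delta})\in A'$ iff $(a,b)\in A$ for all signs $\varepsilon,\delta$, putting $(a^{+},z)\in A'$ and $(a^{-},z)\notin A'$ for all $a$, and giving $z$ no out-arcs. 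Order $V(G)$ by keeping $c$ on $V(G)\setminus\{x\}$ and placing $x$ last; label $x$ by $z$, and label $v\in\ell^{-1}(a)$ by $a^{+}$ if $v\in N_G(x)$ and by $a^{-}$ otherwise. Adjacencies among the vertices of $G\setminus\{x\}$ are unchanged (the sign of a label never affects membership of $A'$), while $x$ is adjacent to $v$ iff $v$ got a $+$-label iff $v\in N_G(x)$; so this is a $(2k+1)$-letter realisation of $G$, giving $\lettericity(G)\le 2\lettericity(G\setminus\{x\})+1$.

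For item~3, take a $k$-letter realisation $(\ell,c)$ of $G$ over $D=(\Sigma,A)$ and let $\overline{D}=(\Sigma,(\Sigma\times\Sigma)\setminus A)$, complementing the arc set including the loops. For $x\neq y$ with $c(x)<c(y)$, in the letter graph of $(\ell,c)$ over $\overline{D}$ the pair $x,y$ is adjacent iff $(\ell(x),\ell(y))\notin A$, i.e.\ iff $xy\notin E(G)$; hence that letter graph is exactly $\overline{G}$, giving $\lettericity(\overline{G})\le\lettericity(G)$. Since $\overline{\overline{G}}=G$, applying this to $\overline{G}$ yields the reverse inequality, so $\lettericity(\overline{G})=\lettericity(G)$.

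The only genuinely non-routine point is the letter-splitting used for the upper bound in item~2: one cannot in general simply append $x$ to the order with a reused colour, because a colour class of $G\setminus\{x\}$ may be neither contained in nor disjoint from $N_G(x)$, which forces that class to be split in two; handling this is exactly what costs the factor $2$ and the extra letter, and it is the step I would expect to need the most care to state cleanly.
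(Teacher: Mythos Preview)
Your proof is correct in all three parts; the arguments are the standard ones and match what one finds in Petkov\v{s}ek's original paper. Note that the present paper does not actually prove this lemma but simply attributes it to \cite{Petkovsek2002}, so there is no in-paper proof to compare against beyond the remark that it ``can be derived from the definition of lettericity''---which is precisely what you have done.
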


	The notion of \emph{rank-width} \cite{OumS2006} was introduced in order to approximate \emph{clique-width} of undirected graphs \cite{CourcelleO2000}, and
	\emph{linear rank-width} is the linearized version of rank-width. Given an $n$-vertex graph $G$ and $X\subseteq V(G)$, we let
	$\cutrk_G(X) = \rk(M_G[X,\compl{X}])$, where $M_G$ is the adjacency matrix of $G$ and $\rk$ is the matrix-rank function over the binary field.
	Given a linear order $\pi:V(G)\to [n]$ of an $n$-vertex graph $G$, we define the \emph{width of $\pi$} as
	$$ 
	\max\limits_{1\leq i \leq n-1}\cutrk_G(\pi^{-1}([i])),
	$$ 
	The \emph{linear rank-width} of a graph $G$, denoted by
	$\lrwd(G)$, is the minimum width over all linear orderings of $V(G)$. We refer for instance to \cite{AdlerK2013} and \cite{JeongKO17} for more information on
	computation of linear rank-width.
	The following proposition relates lettericity to \emph{linear rank-width}.
	
	\begin{proposition}\label{prop:lettericity-lrw} 
		For every graph $G$, $\lrwd(G) \leq \lettericity(G)$. 
	\end{proposition}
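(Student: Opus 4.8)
The plan is to show that any $k$-letter realisation of $G$ yields a linear ordering of $V(G)$ whose width is at most $k$. Let $(\ell, c)$ be a letter realisation of $G$ over a decoder $D = (\Sigma, A)$ with $|\Sigma| = k = \lettericity(G)$, and let $\pi = c$ be the induced linear ordering of $V(G)$. I claim that for every $i \in \{1, \ldots, n-1\}$, writing $X = \pi^{-1}([i])$ and $\overline{X} = V(G) \setminus X$, we have $\cutrk_G(X) = \rk(M_G[X, \overline{X}]) \le k$.

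The key step is to observe that the rows of the matrix $M_G[X, \overline{X}]$ take at most $k$ distinct values. Indeed, take two vertices $x, x' \in X$ with $\ell(x) = \ell(x')$, i.e.\ carrying the same letter. For any $y \in \overline{X}$, we have $c(x) < c(y)$ and $c(x') < c(y)$ since $x, x' \in X$ and $y \notin X$; hence, by the definition of a letter graph, $x$ is adjacent to $y$ if and only if $(\ell(x), \ell(y)) \in A$, and likewise $x'$ is adjacent to $y$ if and only if $(\ell(x'), \ell(y)) = (\ell(x), \ell(y)) \in A$. Therefore $x$ and $y$ are adjacent precisely when $x'$ and $y$ are, so the rows of $M_G[X, \overline{X}]$ indexed by $x$ and by $x'$ are identical. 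Since $\ell$ takes at most $|\Sigma| = k$ values, $M_G[X, \overline{X}]$ has at most $k$ distinct rows, and thus its rank over $\mathbb{F}_2$ (indeed over any field) is at most $k$. This holds for every cut $(X, \overline{X})$ along $\pi$, so the width of $\pi$ is at most $k$, and consequently $\lrwd(G) \le k = \lettericity(G)$.

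I do not anticipate a serious obstacle here; the only point requiring a little care is the direction of the comparison in the letter-graph adjacency rule, namely that when $x \in X$ and $y \in \overline{X}$ the ordering $c(x) < c(y)$ is forced, so adjacency between $x$ and $y$ is governed solely by whether $(\ell(x), \ell(y)) \in A$ and does not depend on $x$ beyond its letter. Once this is in place, the bound on the number of distinct rows — and hence on the rank — is immediate, and the statement follows. (One could alternatively package the "rows take few values" argument through the fact, implicit in Lemma~\ref{lem:chain-graph}, that $\ell^{-1}(a)$ is a module-like set with respect to any later part of the ordering, but the direct row-counting argument above is the most economical.)
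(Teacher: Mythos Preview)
Your proof is correct and follows essentially the same approach as the paper: take the ordering $c$ from a $k$-letter realisation, observe that any two vertices in $c^{-1}([i])$ sharing a letter have identical neighbourhoods in the complement, and conclude that $M_G[X_i,\overline{X_i}]$ has at most $k$ distinct rows and hence rank at most $k$. Your write-up is slightly more explicit about why the adjacency depends only on the letter once $c(x)<c(y)$ is forced, but the argument is the same.
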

	
	\begin{proof} Let $(\ell,c)$ be a $k$-letter realisation of $G$. For every $1\leq i < n$, let $X_i:=c^{-1}([i])$ and $Y_i:=V(G)\setminus X_i$. By definition of
		a letter realisation, for every $x,y\in X_i$ with $\ell(x)=\ell(y)$, and $z\in Y_i$, we have $xz\in E(G)$ if and only if $yz\in E(G)$, i.e. $z$ does not distinguish $x$ and $y$.
		This implies that, for every $1\leq i<n$, there are at most $k$ different rows in the matrix $M_G[X_i,Y_i]$, and thus its rank is at most $k$.
	\end{proof}

	It is worth mentionning that linear rank-width and lettericity are not equivalent, \eg, paths have unbounded lettericity \cite{Petkovsek2002}, but their
        linear rank-width is $1$.

	\section{\mso-definability of $k$-lettericity}
    \label{subsec:mso-definability}

	In this section, we prove that, for a fixed $k$, the class of $k$-letter graphs is \mso-definable, i.e.\ there exists an \mso-formula that is satisfied by a graph $G$ if and only if $G$ is a $k$-letter graph.
	As a first consequence of this result, we obtain an FPT algorithm, parametrized by $k$, for recognizing $k$-lettericity, and more generally for recognizing any \mso-definable class of $k$-letter graphs. As a second consequence, we derive an implicit bound on the size of obstructions for any \mso-definable subclass of $k$-letter graphs. 

	We start by recalling some relevant notions; for more information on \mso formulas, we refer the reader to \cite{CourcelleE2012}. We assume that there is a countable
	set of vertex variables (we always use lower case letters to denote vertex variables), and another countable set of \emph{set (monadic) variables} (upper case letters
	are used for set variables).  The following are atomic formulas: $x=y$, $\edg(x,y)$ and $x\in X$. For \mso formulas $\varphi$ and $\psi$, the following are also
	\mso formulas: $\varphi \wedge \psi$, $\varphi \vee \psi$, $\neg \varphi$, $\exists x\varphi$ and $\exists X \varphi$.  We often write
	$\varphi(\bar{x},\bar{X})$, with $\bar{x}=(x_1,\ldots,x_p)$ and $\bar{X}=(X_1,\ldots, X_t)$, to mean that $x_1,\ldots, x_p,X_1,\ldots,X_t$ are free
	variables of $\varphi$, \ie, those that are not bound by the quantifier $\exists$. The meaning of \mso formulas is defined inductively as follows: 

    \noindent
    Let $G$ be
	a graph and $\varphi(\bar{x},\bar{X})$ an \mso formula. We say that $G$ \emph{satisfies} $\varphi(\bar{u},\bar{U})$, where $\bar{u}\in V(G)^p$ and
	$\bar{U}\in (2^{V(G)})^t$, whenever
	\begin{itemize}
		\item $u_i$ and $u_j$ are the same vertex in $G$, if $\varphi$ is $x_i=x_j$,
		\item $u_iu_j\in E(G)$, if $\varphi$ is $\edg(x_i,x_j)$,
		\item $u_i\in U_j$ if $\varphi$ is $x_i\in X_j$,
		\item $G$ satisfies $\varphi_1(\bar{u},\bar{U})$ and $\varphi_2(\bar{u},\bar{U})$ (resp.\ $\varphi_1(\bar{u},\bar{U})$ or $\varphi_2(\bar{u},\bar{U})$), if
		$\varphi$ is $\varphi_1\wedge \varphi_2$ (resp.\ $\varphi_1\vee \varphi_2$),
		\item $G$ does not satisfy $\psi(\bar{u},\bar{U})$, if $\varphi$ is $\neg \psi$,
		\item there is a vertex $z$ (resp.\ a set $Z$ of vertices) such that $G$ satisfies $\psi(z,\bar{u},\bar{U})$ (resp. $\psi(\bar{u},Z,\bar{U})$), if $\varphi$ is
		$\exists y\psi(y,\bar{x},\bar{X})$ (resp.\ $\exists Y\psi(\bar{x},Y,\bar{X})$).
	\end{itemize}

	As usual, we use $\forall x \varphi(x)$ (resp. $\forall X \varphi(X)$) for $\neg (\exists x \neg \varphi(x))$ (resp. $\neg (\exists X \neg \varphi(X))$),
	$\varphi\to \psi$ for $\neg \varphi \vee \psi$ and $\varphi \leftrightarrow \psi$ for $\varphi \to \psi \wedge \psi \to \varphi$. We also use the shortcut
	$\exists!x \varphi(x)$ for $\exists x(\varphi(x) \wedge \forall y (\varphi(y) \to y=x))$ and $X\subseteq Y$ for $\forall x(x\in X\to x\in Y)$. 
	A graph property $\mathcal{P}$ is \emph{\mso-definable} if there exists an \mso-formula $\varphi$
	such that a graph $G$ satisfies $\varphi$ if and only if $G$ belongs to $\mathcal{P}$.
	
	We first provide a
	combinatorial characterisation of $k$-lettericity  (Theorem \ref{thm:characterisation}) and then use it for constructing an \mso formula that characterises $k$-lettericity.
	
	\begin{definition}[Compatibility graph]\label{defn:compatible} 
		For a graph $G=(V,E)$, a decoder $D = (\alphabet, A)$,
		and a letter assignment $\ell : V \rightarrow \alphabet$,
		the \emph{compatibility graph}, denoted by $\CG(G,D,\ell)$, 
		is the directed graph with vertex set $V$ and the arc set
		\begin{align*}
			\bigcup\limits_{(a,b)\in A, (b,a)\notin A} \big\{ (x,y)\mid \ell(x)=a,\ell(y)=b,xy\in E \big\} \cup \big\{(y,x)\mid \ell(x)=a,\ell(y)=b,xy\notin E \big\}.
		\end{align*}
	\end{definition}
	
	\noindent
	The compatibility graph associated with  a letter assignment over $D$ 
	represents precedence order that any letter realisation $(\ell,c)$ over $D$ should satisfy.
	Namely, for any such letter realisation and distinct vertices $x,y \in V(G)$ we have 
	that if $(x,y)$ is an arc in $\CG(G,D,\ell)$, then $c(x) < c(y)$.

	\begin{theorem}\label{thm:characterisation}  Let $D=(\alphabet,A)$ be a $k$-letter decoder. 
		A graph $G=(V,E)$ is a $k$-letter graph over $D$ if and only if there exists a letter assignment $\ell : V \rightarrow \alphabet$ such that
		\begin{enumerate}
			\item[(C1)] for any $a\in \Sigma$ with $|\ell^{-1}(a)| \geq 2$, the set $\ell^{-1}(a)$ is a clique if and only if $(a,a)\in A$, 
			\item[(C2)] for any $a\in \Sigma$ with $|\ell^{-1}(a)| \geq 2$, the set $\ell^{-1}(a)$ is an independent set if and only if $(a,a)\notin A$,
			\item[(C3)] for any $a,b \in \Sigma$, 
			if $(a,b),(b,a)\in A$ (resp. $(a,b),(b,a)\notin A$), then the graph $G[\ell^{-1}(a), \ell^{-1}(b)]$ is a complete bipartite graph (resp.\ edgeless bipartite graph).
			\item[(C4)] the compatibility graph $\CG(G,D,\ell)$ is acyclic.
		\end{enumerate}
		
		Moreover, for any topological ordering $\pi$ of the compatibility graph, the pair $(\ell, \pi)$ is a $k$-letter realisation of $G$ over $D$.
	\end{theorem}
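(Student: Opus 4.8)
The plan is to prove both directions directly from the definition of a letter realisation, obtaining the ``moreover'' part as a byproduct of the backward direction. For the forward direction, suppose $(\ell,c)$ is a $k$-letter realisation of $G$ over $D$; I would check (C1)--(C4) for $\ell$. Conditions (C1)--(C3) are immediate from the adjacency rule applied to pairs of vertices: if $\ell(x)=\ell(y)=a$ and, say, $c(x)<c(y)$, then $xy\in E$ if and only if $(a,a)\in A$, giving (C1) and (C2); and if $\ell(x)=a\neq b=\ell(y)$ with both (resp.\ neither) of $(a,b),(b,a)$ in $A$, then $xy\in E$ (resp.\ $xy\notin E$) regardless of the order of $c(x),c(y)$, giving (C3). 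For (C4), the key point is that every arc of $\CG(G,D,\ell)$ points from the smaller $c$-value to the larger: if $(a,b)\in A$, $(b,a)\notin A$, $\ell(x)=a$, $\ell(y)=b$, then $xy\in E$ forces $c(x)<c(y)$ (the second disjunct of the adjacency rule is unavailable since $(b,a)\notin A$), while $xy\notin E$ forces $c(y)<c(x)$; by Definition~\ref{defn:compatible}, in the former case the arc $(x,y)$ is added and in the latter the arc $(y,x)$, so every arc respects the linear order $c$, and therefore $\CG(G,D,\ell)$ is acyclic.

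For the backward direction (and the ``moreover'' statement simultaneously), suppose $\ell$ satisfies (C1)--(C4). By (C4), $\CG(G,D,\ell)$ has a topological ordering $\pi$, and I claim $(\ell,\pi)$ is a $k$-letter realisation of $G$ over $D$. Fixing distinct $x,y\in V$ with $\pi(x)<\pi(y)$ and writing $a=\ell(x)$, $b=\ell(y)$, it suffices to show $xy\in E$ if and only if $(a,b)\in A$. If $a=b$ this is exactly (C1)/(C2), since together they force $\ell^{-1}(a)$ to be a clique if $(a,a)\in A$ and an independent set otherwise. If $a\neq b$ and both or neither of $(a,b),(b,a)$ lie in $A$, this is (C3). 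Otherwise exactly one of $(a,b),(b,a)$ lies in $A$, and then Definition~\ref{defn:compatible} places an arc between $x$ and $y$ whose direction is determined by whether $xy\in E$; since $\pi$ is a topological ordering with $\pi(x)<\pi(y)$, that arc must be $(x,y)$, and working through the two sub-cases ($(a,b)\in A$, $(b,a)\notin A$; or $(b,a)\in A$, $(a,b)\notin A$) shows that this amounts precisely to $xy\in E$ in the first and to $xy\notin E$ in the second, i.e.\ to $xy\in E\iff(a,b)\in A$ in both. Hence $(\ell,\pi)$ is a $k$-letter realisation, and as $\pi$ was an arbitrary topological ordering of $\CG(G,D,\ell)$, this also yields the ``moreover'' claim.

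I expect no serious obstacle: the proof is a finite case analysis on the four possibilities for whether $(a,b)$ and $(b,a)$ belong to $A$. The one delicate point is the case where exactly one of $(a,b),(b,a)$ is in $A$, where the edge relation between $\ell^{-1}(a)$ and $\ell^{-1}(b)$ is \emph{not} fixed by the chain-graph/homogeneity conditions alone but only in conjunction with the acyclicity constraint; there, keeping straight which vertex plays the role of which letter in Definition~\ref{defn:compatible}, and hence the orientation of the forced arc, is the only thing requiring attention.
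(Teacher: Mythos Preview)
Your proposal is correct and follows essentially the same approach as the paper's own proof: a case analysis on whether $a=b$, whether both/neither of $(a,b),(b,a)$ lie in $A$, or whether exactly one does, using (C1)--(C3) for the homogeneous cases and the compatibility-graph arc orientation for the asymmetric case. The only cosmetic difference is that for (C4) in the forward direction the paper argues by contradiction from a hypothetical circuit, whereas you argue directly that $c$ is a topological ordering of $\CG(G,D,\ell)$; these are the same observation.
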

	
	\begin{proof} 
		Assume that $\ell$ is a letter assignment satisfying (C1)-(C4).
		Let $\pi$ be a topological ordering of $\CG(G,D,\ell)$ and let $G'$ be the graph with vertex set $V$ and with the $k$-letter realisation $(\ell,\pi)$ over $D$.  
		We claim that $G=G'$. Let $x$ and $y$ be two vertices in $V$. We first
		notice that if $\ell(x)=\ell(y)$, then $xy\in E(G')$ if and only if $(\ell(x),\ell(x))\in D$, and since $\ell$ satisfies (C1)-(C2), this is also the case in $G$. 
		Similarly, due to (C3), if $\ell(x) \neq \ell(y)$ and either both or none of $(\ell(x),\ell(y))$ and $(\ell(y),\ell(x))$ belong to $D$, then $xy\in E(G)$ if and only if $xy\in E(G')$. 
		We therefore can assume that $\ell(x)\ne \ell(y)$ and exactly one of $(\ell(x),\ell(y))$ and $(\ell(y),\ell(x))$ belongs to $D$.
		Suppose, without loss of generality, that $(\ell(x),\ell(y))\in D$. 
		If $xy \in E(G)$, then $(x,y)\in A(\CG(G,D,\ell))$ and hence $\pi(x)<\pi(y)$, which implies $xy\in E(G')$. 
		Similarly, if $xy \notin E(G)$, then $(y,x)\in A(\CG(G,D,\ell)$ and hence $\pi(y)<\pi(x)$, implying $xy \notin E(G')$.
		
		Assume now that $G$ is a $k$-letter graph and $(\ell,c)$ is a $k$-letter realisation of $G$ over $D$. By definition, $\ell$
		satisfies (C1)-(C3) and it remains
		only to show that $\CG(G,D,\ell)$ is acyclic. Suppose $\CG(G,D,\ell)$ has a circuit $(x_1,x_2,\ldots,x_p)$. From the definition of $\CG(G,D,\ell)$, we know that, for each $1\leq j\leq p$, with indices modulo $p$,
		\begin{itemize}
			\item either $x_{j}x_{j+1}\in E(G)$, $(\ell(x_{j}),\ell(x_{j+1}))\in D$ and $(\ell(x_{j+1}),\ell(x_{j}))\notin D$,
			\item or $x_{j}x_{j+1}\notin E(G)$, $(\ell(x_{j+1}),\ell(x_{j}))\in D$ and $(\ell(x_{j}),\ell(x_{j+1}))\notin D$.
		\end{itemize}
		Hence, $c(x_{j}) < c(x_{j+1})$ for all $1\leq j\leq p$. Therefore, $c(x_1)<c(x_2) < c(x_3)< \cdots < c(x_p) < c(x_1)$, a
		contradiction. 
	\end{proof}

	For a graph $G$, we will
	call any ordered partition $\bar{X}=(X_1,\ldots, X_k)$ of $V(G)$
	such that $\ell_{\bar{X}}$ satisfies (C1)-(C4) for some $D=([k],A)$, a \emph{letter partition of $G$ over $D$}. We now show how
	to define an \mso formula $\varphi_k$ such that a graph satisfies $\varphi_k$ if and only it has lettericity at most $k$.

	\begin{proposition}\label{prop:d-mso} 
		For every decoder $D=([k],A)$, there exists an \mso formula $\varphi_D(X,X_1,\ldots,X_k)$, whose length depends linearly on the size of $D$, such that
		for every graph $G$ and $Z\subseteq V(G)$, $G[Z]$ admits a $k$-letter realisation $(\ell,c)$ over $D$ if and only if $G$ satisfies
		$\varphi_D(Z,\ell^{-1}(1),\ldots, \ell^{-1}(k))$.
	\end{proposition}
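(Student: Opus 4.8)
The plan is to translate each of the conditions (C1)--(C4) from \Cref{thm:characterisation} directly into \mso, working relative to a free set variable $X$ (which plays the role of $Z$) and free set variables $X_1,\dots,X_k$ (which encode the preimages $\ell^{-1}(1),\dots,\ell^{-1}(k)$). First I would write an auxiliary formula $\mathrm{partition}_k(X,X_1,\dots,X_k)$ asserting that $X_1,\dots,X_k$ are pairwise disjoint and their union is exactly $X$; note we must allow some $X_i$ to be empty, since a $k$-letter graph need not use all $k$ letters, so this is a ``partition into at most $k$ blocks'' rather than a genuine ordered partition. Then $\varphi_D$ will be the conjunction of $\mathrm{partition}_k$ with one subformula for each of (C1)--(C4), all of whose quantifiers range only over elements/subsets of $X$ (easily enforced by relativising $\exists x$ to $\exists x(x\in X\wedge\cdots)$, etc.), so that the formula speaks about $G[Z]$ rather than about $G$.

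Conditions (C1), (C2), (C3) are first-order and essentially immediate: for each $a\in[k]$, whether ``$X_a$ is a clique'' or ``$X_a$ is independent'' is expressed by $\forall x\forall y\big((x\in X_a\wedge y\in X_a\wedge x\neq y)\to\edg(x,y)\big)$ and its negation-style analogue; for each ordered pair $(a,b)$ one writes the corresponding ``complete bipartite'' or ``edgeless'' assertion about $X_a,X_b$. Crucially, the decoder $D=([k],A)$ is \emph{fixed}, so the set $A$ is known: for each $a$ we simply look up whether $(a,a)\in A$ and include the clique-clause or the independence-clause accordingly, and similarly for each pair $(a,b)$ with $(a,b),(b,a)\in A$ or $(a,b),(b,a)\notin A$. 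There are $O(k)$ such clauses for (C1)--(C2) and $O(k^2)$ for (C3), each of constant size, matching the claimed linear dependence on $|D|$.

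The only real content is (C4): acyclicity of $\CG(G,D,\ell)$. The standard \mso trick for ``the digraph has no directed cycle'' is to say that every nonempty vertex subset has a source, i.e.\ a vertex with no in-arc inside the set. So I would introduce a quantifier-free formula $\mathrm{arc}_{D}(x,y,X_1,\dots,X_k)$ that, using the definition of $\CG$, says $(x,y)$ is an arc: it is a finite disjunction over the pairs $(a,b)\in A$ with $(b,a)\notin A$ of the clause $\big(x\in X_a\wedge y\in X_b\wedge\edg(x,y)\big)\vee\big(x\in X_b\wedge y\in X_a\wedge\neg\edg(x,y)\big)$ (being careful with the direction of the arc, which is reversed for non-edges). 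Then (C4) becomes
\[
\forall Y\Big(\big(Y\subseteq X\wedge\exists x\,x\in Y\big)\to\exists x\big(x\in Y\wedge\forall y(y\in Y\to\neg\,\mathrm{arc}_D(y,x,X_1,\dots,X_k))\big)\Big),
\]
which is a single monadic second-order quantifier over a formula of size $O(|A|)$. The correctness of this encoding of acyclicity is a routine induction (a minimal cycle would be a nonempty set with no source). I expect this acyclicity clause to be the main obstacle only in the sense of getting the bookkeeping right --- in particular making sure $\mathrm{arc}_D$ faithfully reproduces \Cref{defn:compatible}, including the self-loop-free nature of arcs and the edge/non-edge asymmetry --- rather than requiring any genuinely new idea.

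Finally I would assemble $\varphi_D := \mathrm{partition}_k(X,X_1,\dots,X_k)\wedge(\mathrm{C1})\wedge(\mathrm{C2})\wedge(\mathrm{C3})\wedge(\mathrm{C4})$ and verify both directions of the equivalence by appealing to \Cref{thm:characterisation}: if $G$ satisfies $\varphi_D(Z,X_1,\dots,X_k)$ then $X_1,\dots,X_k$ induce a letter assignment $\ell$ on $Z$ satisfying (C1)--(C4) in $G[Z]$, so by the theorem any topological order of the (acyclic) compatibility graph yields a $k$-letter realisation of $G[Z]$ over $D$; conversely, a $k$-letter realisation $(\ell,c)$ of $G[Z]$ over $D$ gives sets $X_a=\ell^{-1}(a)$ which by the ``only if'' part of the theorem satisfy all four conditions, hence satisfy $\varphi_D$. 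The length bound is clear from the clause count above.
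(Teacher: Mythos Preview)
Your proposal is correct and follows essentially the same approach as the paper: both reduce to \Cref{thm:characterisation} and encode (C1)--(C4) directly in \mso, with a partition formula, first-order clauses for (C1)--(C3), and an \mso\ acyclicity clause for (C4) built on top of a quantifier-free ``arc'' formula for the compatibility graph. The only noteworthy difference is cosmetic: the paper folds (C1)--(C2) into the (C3) conjunctions by allowing $a=b$, and encodes acyclicity as ``no nonempty subset in which every vertex has in- and out-degree exactly one,'' whereas you separate (C1)--(C2) and use the equivalent ``every nonempty subset has a source'' formulation; both are standard and yield the same linear size bound.
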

	
	\begin{proof} 
		We first notice that any map $\ell:V(G)\to [k]$ corresponds to a partition of $V(G)$ into at most $k$ parts.  Moreover, by Theorem \ref{thm:characterisation},
		a graph $G$ has a $k$-letter realisation $(\ell,c)$ over $D$ if and only if $(\ell^{-1}(1), \ldots,\ell^{-1}(k))$ is a letter partition of $G$ over $D$.  We
		can express $\varphi_D(X,X_1,\ldots,X_k)$ as follows, which decides the four conditions (C1)-(C4):
		\begin{align*}
			& Partition(X,X_1,\ldots,X_k) \wedge \\
			& \left(\bigwedge\limits_{(a,b),(b,a)\in D} \forall x,y (x \in X_a \wedge y\in X_b \rightarrow \edg(x,y))\right) \wedge \\
			& \left(\bigwedge\limits_{(a,b),(b,a)\notin D} \forall x,y (x \in X_a\wedge y\in X_b \rightarrow \neg\edg(x,y))\right) \wedge \\
			& Acyclic(X,X_1,\ldots,X_k),
		\end{align*}
		
		\noindent
		where the formula $Partition(X,X_1,\ldots,X_k)$ checks that $\{X_1, \ldots, X_k\}$ is a partition of $X$, and the formula $Acyclic(X,X_1,\ldots,X_k)$ is defined as follows:
		\begin{align*}
			\neg\Big(\exists Y(Y\subseteq X\ \wedge\ \forall x \in Y \exists!y \exists! z (y\in Y \wedge z\in Y \wedge \psi(x,y,X,X_1,\ldots,X_k) \wedge \psi(z,x,X,X_1,\ldots,X_k)))\Big),\\
			\intertext{where $\psi(x,y,X,X_1,\ldots,X_k)$ is equal to}
			x\in X \wedge y\in X \wedge\ \bigvee\limits_{(a,b)\in D,(b,a)\notin D} (x\in X_a \wedge y\in X_b \wedge \edg(x,y)) \vee (x\in X_b\wedge y\in X_a \wedge \neg \edg(x,y)).
		\end{align*}
		
		\noindent 
		Clearly $\varphi_D$ has the desired length.  The first three parts of $\varphi_D$ check that $\bar{X}=(X_1,\ldots, X_k)$ is an ordered partition of
		$X$ such that whenever $(a,b),(b,a)\in D$ (resp. $(a,b),(b,a)\notin D$), then $G[X_a,X_b]$ is a complete (resp. edgeless) bipartite graph.  Notice that
		the formula also checks that each part is either a clique (if $(a,a)\in D$) or is an independent set (if $(a,a)\notin D$) because $a$ is not necessarily
		different from $b$ in the big conjunctions.  Observe that all these checks decide whether the letter assignment $\ell_{\bar{X}}$ satisfies (C1)-(C3). Now,
		it is easy to verify that $\psi(x,y,X,X_1,\ldots,X_k)$ corresponds to the arcs of $\CG(G[X],D,\ell_{\bar{X}})$ by the definition of compatibility graph. Indeed,
		$(x,y)\in A(\CG(G[X],D,\ell_{\bar{X}}))$ if ($(\ell_{\bar{X}}(x),\ell_{\bar{X}}(y))\in D$, $(\ell_{\bar{X}}(y),\ell_{\bar{X}}(x))\notin D$ and
		$xy\in E(G)$) or ($(\ell_{\bar{X}}(y),\ell_{\bar{X}}(x))\in D$, $(\ell_{\bar{X}}(x),\ell_{\bar{X}}(y))\notin D$ and $xy\notin E(G)$), and this
		is exactly what $\psi(x,y,X,X_1,\ldots,X_k)$ checks.  Finally, the sub-formula $Acyclic(X,X_1,\ldots,X_k)$ checks the non-existence of a subset $Y$ of $X$
		such that in the subgraph of $\CG(G[X],D,\ell_{\bar{X}})$ induced by $Y$ every vertex has in-degree $1$ and out-degree $1$, which characterises the existence
		of a circuit. We can therefore conclude that $G[Z]$ has a $k$-letter realisation $(\ell,c)$ over $D$ if and only if $G$ satisfies
		$\varphi_D(Z,\ell^{-1}(1),\ldots, \ell^{-1}(k))$.
	\end{proof}
	
	We are now ready to define a formula, which is satisfied by graphs of lettericity at most $k$ and only by them.
	
	\begin{theorem}\label{thm:mso} For every positive integer $k$, there exists an \mso formula $\varphi_k(X,X_1,\ldots,X_k)$ of length $2^{O(k^2)}$,
		such that, for every graph $G$ and $Z\subseteq V(G)$, $G[Z]$ is a $k$-letter graph if and only if $G$ satisfies $\varphi_k(Z,Z_1,\ldots,Z_k)$ for some ordered
		partition $(Z_1,\ldots,Z_k)$ of $Z$.
	\end{theorem}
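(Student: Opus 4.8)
The plan is to obtain $\varphi_k$ as a disjunction, over all decoders on the alphabet $[k]$, of the per-decoder formulas supplied by Proposition~\ref{prop:d-mso}. Concretely, I would set
$$
\varphi_k(X,X_1,\ldots,X_k)\ :=\ \bigvee_{D=([k],A)}\varphi_D(X,X_1,\ldots,X_k),
$$
where the disjunction ranges over all $2^{k^2}$ directed graphs $D$ with vertex set $[k]$ (loops allowed, since $(a,a)\in A$ is meaningful in a decoder). Each $\varphi_D$ has length linear in the size of $D$, which is $O(k^2)$, so $\varphi_k$ has length $2^{k^2}\cdot O(k^2)=2^{O(k^2)}$, matching the claimed bound.

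For soundness, if $G$ satisfies $\varphi_k(Z,Z_1,\ldots,Z_k)$ for some ordered partition $(Z_1,\ldots,Z_k)$ of $Z$, then it satisfies $\varphi_D(Z,Z_1,\ldots,Z_k)$ for at least one $D=([k],A)$, and Proposition~\ref{prop:d-mso} then provides a $k$-letter realisation of $G[Z]$ over $D$, so $G[Z]\in\mathcal{L}_k$. For completeness, suppose $G[Z]$ is a $k$-letter graph. By~\eqref{eq:union} it admits a $j$-letter realisation $(\ell,c)$ over some decoder $D'=(\alphabet',A')$ with $j:=|\alphabet'|\le k$. I would pad $D'$ to a decoder on $[k]$: relabel $\alphabet'$ as $\{1,\ldots,j\}$, add fresh letters $j+1,\ldots,k$ incident to no arcs, and call the result $D=([k],A)$. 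Since $\ell$ uses none of the new letters, the sets $\ell^{-1}(j+1),\ldots,\ell^{-1}(k)$ are empty, so conditions (C1)-(C4) of Theorem~\ref{thm:characterisation} for $\ell$ over $D$ are literally the same as for $\ell$ over $D'$; hence $(\ell,c)$ is a $k$-letter realisation of $G[Z]$ over $D$. Applying Proposition~\ref{prop:d-mso} in the other direction, $G$ satisfies $\varphi_D(Z,\ell^{-1}(1),\ldots,\ell^{-1}(k))$, and therefore $\varphi_k(Z,\ell^{-1}(1),\ldots,\ell^{-1}(k))$, with $(\ell^{-1}(1),\ldots,\ell^{-1}(k))$ an ordered partition of $Z$.

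Given Proposition~\ref{prop:d-mso}, there is no genuine obstacle here: the construction is a finite disjunction, the length estimate is just a count of decoders on $k$ letters, and the only point that needs a line of justification is the padding step, namely that a realisation using fewer than $k$ letters can be promoted to one over a decoder on exactly $[k]$ without affecting (C1)-(C4). I would also flag in the final write-up that, as in the proof of Proposition~\ref{prop:d-mso} (where $Partition(X,X_1,\ldots,X_k)$ is allowed to have empty parts), "ordered partition'' in the statement is understood to permit empty blocks; otherwise the equivalence would fail for $Z$ with $|Z|<k$, in particular for $Z=\emptyset$.
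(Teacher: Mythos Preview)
Your proposal is correct and follows essentially the same approach as the paper: define $\varphi_k$ as the disjunction of the $\varphi_D$ over all $2^{k^2}$ decoders on $[k]$, and read off the length bound from the count of decoders. You are simply more explicit than the paper about two points it glosses over, namely the padding of a realisation using fewer than $k$ letters to one on $[k]$, and the need for ``ordered partition'' to permit empty blocks.
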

	\begin{proof} 
		By Proposition~\ref{prop:d-mso}, for every decoder $D=([k],A)$, one can construct a formula 
		$$
		\varphi_D(X,X_1,\ldots,X_k),
		$$
		whose length depends linearly on the size of $D$,
		such that $G[Z]$ admits a $k$-letter realisation $(\ell,c)$ over $D$ if and only if $G$ satisfies
		$\varphi_D(Z,\ell^{-1}(1),\ldots,\ell^{-1}(k))$. 
		Thus, by defining $\varphi_k (X,X_1,\ldots, X_k)$ as 
		\begin{align*}
			\bigvee_{\substack{D\ \textrm{: directed graph} \\ \textrm{on $k$ vertices}}} \varphi_D(X,X_1,\ldots, X_k),
		\end{align*}
		we conclude that $G[Z]$ is a $k$-letter graph if and only if $G$ satisfies $\varphi_k (Z,Z_1,\ldots, Z_k)$
		for some ordered partition $(Z_1,\ldots, Z_k)$ of $Z$.
		
		The bound on the length of $\varphi_k$ follows from the fact that there are at most $2^{k^2}$ directed graphs on $k$ vertices.
	\end{proof}
	
	Using \cref{prop:lettericity-lrw} and \cref{thm:mso}, we can provide an FPT algorithm for recognising $k$-letter graphs. An algorithm with a better
        running time \wrt $k$ is proposed in Appendix \ref{sec:fpt-recognition}.
	
	\begin{theorem}\label{thm:recognition}  Let $k$ be a positive integer.  For some computable function $f:\mathbb{N}\to \mathbb{N}$, there is an algorithm that takes as input an $n$-vertex graph $G$ and checks in time
		$f(k)\cdot n^3$ whether $G$ has lettericity at most $k$, and if so outputs a $k$-letter realisation of $G$. The same holds for any \mso-definable hereditary property of $k$-letter graphs. 
	\end{theorem}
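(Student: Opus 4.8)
The plan is to reduce recognition of $k$-letter graphs to \mso model checking on graphs of bounded clique-width, combining the formula of \cref{thm:mso} with \cref{prop:lettericity-lrw}, which ensures that every ``yes''-instance has bounded rank-width, and hence bounded clique-width. Concretely, I would first convert the formula $\varphi_k$ of \cref{thm:mso} into an \mso sentence by existentially quantifying over the blocks and forcing the first argument to be the whole vertex set:
\[
	\psi_k \;:=\; \exists X\,\exists X_1\cdots\exists X_k\,\bigl(\forall x\,(x\in X)\ \wedge\ \varphi_k(X,X_1,\ldots,X_k)\bigr).
\]
By \cref{thm:mso}, a graph $G$ satisfies $\psi_k$ if and only if $\lettericity(G)\le k$; its length is still $2^{O(k^2)}$, and it uses only the predicate $\edg$, set membership, and quantification over vertices and vertex sets, so it lies in precisely the fragment handled by Courcelle's model-checking algorithm on graphs of bounded clique-width (see~\cite{CourcelleE2012}).

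The algorithm then has two phases. Given an $n$-vertex graph $G$, first run Oum's cubic-time fixed-parameter algorithm that, for fixed $k$, either produces a clique-width expression of $G$ of width $2^{O(k)}$ (obtained from an approximate rank-decomposition of width $O(k)$) or correctly reports that the rank-width of $G$ exceeds $k$. In the latter case, since rank-width is at most linear rank-width and $\lrwd(G)\le\lettericity(G)$ by \cref{prop:lettericity-lrw}, we conclude $\lettericity(G)>k$ and answer ``no''. In the former case, run Courcelle's \mso model-checking algorithm on the obtained expression to decide whether $G\models\psi_k$; this runs in time $g(k)\cdot n$ for a computable function $g$, and decides whether $\lettericity(G)\le k$. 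The total running time is dominated by the first phase and is $f(k)\cdot n^3$.

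To output a realisation when $\lettericity(G)\le k$, I would use the constructive version of the model-checking algorithm --- or, equivalently, $O(kn)$ rounds of self-reduction that fix the block of one vertex at a time --- to obtain sets $X_1,\ldots,X_k$ witnessing $\varphi_k$. Since $\varphi_k=\bigvee_D\varphi_D$ over the at most $2^{k^2}$ decoders $D=([k],A)$, we then identify, by a direct polynomial-time test of conditions (C1)--(C4), a decoder $D$ for which $\bar X=(X_1,\ldots,X_k)$ is a letter partition of $G$ over $D$. By \cref{thm:characterisation}, the pair $(\ell_{\bar X},\pi)$ is then a $k$-letter realisation of $G$ for any topological ordering $\pi$ of $\CG(G,D,\ell_{\bar X})$, and such a $\pi$ is computed in linear time by topological sort. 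For an \mso-definable hereditary class $\mathcal{P}\subseteq\mathcal{L}_k$ defined by a sentence $\chi$, one simply model-checks $\psi_k\wedge\chi$ in the second phase instead of $\psi_k$: membership in $\mathcal{P}$ forces $\lettericity(G)\le k$, and hence bounded rank-width, so the same two-phase pipeline applies unchanged.

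I expect the main obstacle to be bookkeeping rather than conceptual: checking that the rank-width approximation indeed runs in $f(k)\cdot n^3$ for fixed $k$, that the clique-width model-checking step is genuinely linear in $n$ once the expression is in hand, and that a witnessing assignment to the outermost set quantifiers can be extracted within the same bound. It is also worth flagging that the function $f(k)$ produced this way is astronomical --- the automaton associated with a formula of length $2^{O(k^2)}$ has size bounded only by a tower of exponentials --- which is exactly why the appendix develops a self-contained dynamic-programming algorithm whose dependence on $k$ is far more reasonable.
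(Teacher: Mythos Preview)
Your proposal is correct and follows essentially the same two-phase strategy as the paper: bound a width parameter via \cref{prop:lettericity-lrw}, then invoke Courcelle's algorithm on the \mso formula from \cref{thm:mso}, and finally extract a letter realisation via \cref{thm:characterisation} and a topological sort of the compatibility graph. The only differences are cosmetic: the paper uses the exact linear-rank-width algorithm of~\cite{JeongKO17} to obtain a linear ordering (rather than Oum's rank-width approximation and a clique-width expression), and it loops over all decoders~$D$, model-checking each $\varphi_D'$ separately so that the witnessing decoder is identified directly, whereas you model-check the disjunction $\psi_k$ once and recover $D$ afterwards by testing (C1)--(C4).
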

	\begin{proof} 
		Let $G$ be an $n$-vertex graph. The algorithm works in two phases.
		In the first phase, it checks whether $G$ has linear rank-width at most $k$ using the FPT algorithm from \cite{JeongKO17}. In time $2^{O(k^2)}\cdot n^3$, this algorithm outputs NO if the linear rank-width of $G$ is larger than $k$, and otherwise it outputs 
		a linear ordering $x_1,x_2,\ldots,x_n$ of the vertices of $G$ witnessing linear rank-width at most $k$.
		In the former case, by Proposition \ref{prop:lettericity-lrw}, our algorithm correctly answers that the lettericity of $G$ is strictly greater than $k$.
		In the latter case, the algorithm proceeds to the second phase. In this phase, 
		for every $k$-letter decoder $D$, we invoke
		Courcelle's algorithm which, given the linear
		ordering $x_1,\ldots, x_n$ of the vertices of $G$, checks in time $g(k,|\varphi'_D|)\cdot n$, for some function $g$, whether there exists $\bar{Z} = (Z_1, Z_2, \ldots, Z_k)$ such that $G$ satisfies
		$\varphi'_D(Z_1, Z_2, \ldots, Z_k)$, and if so computes one such $\bar{Z}$ (see for instance \cite[Theorem 6.55]{CourcelleE2012} or \cite{CourcelleK2009}),
		where
		$$
		\varphi_D'(X_1,\ldots, X_k) := \exists X\left(\forall x(x\in X)\ \wedge\ \varphi_D(X,X_1,\ldots, X_k)\right).
		$$
		If $G$ does not satisfy any $\varphi_D'$, then it follows from Proposition \ref{prop:d-mso} that $G$ is not a $k$-letter graph. Otherwise,
		our algorithm picks a decoder $D$ and an ordered partition $\bar{Z} = (Z_1, \ldots, Z_k)$ such that $G$ satisfies $\varphi_D'(Z_1, Z_2, \ldots, Z_k)$. 
		Then, by Proposition \ref{prop:d-mso} and Theorem \ref{thm:characterisation}, the letter assignment $\ell_{\bar{Z}}$ together with any
		topological ordering $\pi$ of the compatibility graph $\CG(G,D,\ell_{\bar{Z}})$ is a $k$-letter realisation of $G$. Both $\ell_{\bar{Z}}$ and $\pi$ can be computed in polynomial time from $G, D$, and $\bar{Z}$, which concludes the proof of the first statement.
		
		For the second statement, if $\cC$ is a hereditary family of $k$-letter graphs \mso-definable by $\theta_{\cC}$, 
		the conclusion follows using the same algorithm as for the first statement and by replacing
		$\varphi'_D$ with $\varphi'_D \wedge \theta_{\cC}$.
	\end{proof}
	
	Using the formula $\varphi_k$ from \cref{thm:mso}, we can define 
	an \mso formula that characterises obstructions for
	$k$-lettericity.  We remind that a graph $H$ is an \emph{obstruction} for a hereditary property $\mathcal{P}$ if $H$ does not satisfy $\mathcal{P}$, but $H\setminus \{x\}$
	does for every vertex $x$ of $H$.
	
	\begin{proposition}\label{prop:obs} There exists an \mso sentence $Obs_k$ of length  $2^{O(k^2)}$, such that a graph $G$ is an obstruction for the class of
		$k$-letter graphs if and only if $G$ satisfies $Obs_k$.
	\end{proposition}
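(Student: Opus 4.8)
The plan is to assemble $Obs_k$ directly from the formula $\varphi_k$ of \cref{thm:mso}, exploiting the fact that $\varphi_k(X,X_1,\ldots,X_k)$ is relativised to an arbitrary vertex subset $X$. Recall from \cref{thm:mso} that, for every graph $G$ and every $Z\subseteq V(G)$, the induced subgraph $G[Z]$ is a $k$-letter graph if and only if $G$ satisfies $\varphi_k(Z,Z_1,\ldots,Z_k)$ for some ordered partition $(Z_1,\ldots,Z_k)$ of $Z$. Since $\varphi_k$ itself forces $(X_1,\ldots,X_k)$ to be an ordered partition of $X$, existentially quantifying the blocks yields a formula with a single free set variable,
$$
\lambda_k(X)\ :=\ \exists X_1\cdots\exists X_k\ \varphi_k(X,X_1,\ldots,X_k),
$$
which expresses ``$G[X]$ is a $k$-letter graph''; its correctness is immediate from \cref{thm:mso}, and its length is $k+|\varphi_k|=2^{O(k^2)}$.

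Next I would translate the two defining conditions of an obstruction for $\cL_k$ into sentences over $\lambda_k$. To say that $G$ itself is not a $k$-letter graph, I instantiate $X$ with the whole vertex set via the standard idiom $\exists X\,(\forall x\,(x\in X)\wedge\neg\lambda_k(X))$: the unique set satisfying $\forall x\,(x\in X)$ is $V(G)$, so this sentence holds exactly when $G\notin\cL_k$. To say that $G\setminus\{v\}$ is a $k$-letter graph for every vertex $v$, I write $\forall v\ \exists X\,\big(\forall x\,(x\in X\leftrightarrow\neg(x=v))\ \wedge\ \lambda_k(X)\big)$, where the inner quantifier pins down $X=V(G)\setminus\{v\}$. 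Then I set
$$
Obs_k\ :=\ \big(\exists X\,(\forall x\,(x\in X)\wedge\neg\lambda_k(X))\big)\ \wedge\ \big(\forall v\ \exists X\,(\forall x\,(x\in X\leftrightarrow\neg(x=v))\wedge\lambda_k(X))\big).
$$

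It then remains to check correctness and the length bound. Correctness is immediate from the previous paragraph together with the definition of an obstruction and the fact that $\cL_k$ is hereditary (\cref{lem:chain-graph}(2)), so that ``$G$ is an obstruction for $\cL_k$'' is literally ``$G\notin\cL_k$, and $G\setminus\{v\}\in\cL_k$ for every $v\in V(G)$''. For the length, $Obs_k$ consists of two occurrences of $\lambda_k$ surrounded by a constant number of quantifiers, Boolean connectives, and the two constant-length subformulas describing $V(G)$ and $V(G)\setminus\{v\}$, so $|Obs_k|=O(|\lambda_k|)=2^{O(k^2)}$. I do not expect a genuine obstacle: the real work has already been carried out in making $\varphi_k$ relativisable in \cref{thm:mso}, and the only points deserving a moment's care are checking that the two set-quantifier idioms indeed isolate $V(G)$ and $V(G)\setminus\{v\}$, and sanity-checking degenerate inputs (the empty graph and single vertices are $k$-letter graphs, hence not obstructions, and are correctly rejected by the first conjunct).
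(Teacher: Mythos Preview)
Your proposal is correct and follows essentially the same approach as the paper: both build $Obs_k$ by conjoining an instance of $\varphi_k$ (negated, with $X$ forced to be $V(G)$) with a universally quantified instance (with $X$ forced to be $V(G)\setminus\{v\}$), and both read off the $2^{O(k^2)}$ length bound directly from \cref{thm:mso}. The only differences are cosmetic---you package $\exists X_1\cdots\exists X_k\,\varphi_k$ into an abbreviation $\lambda_k$ and use a biconditional to pin down $V(G)\setminus\{v\}$, whereas the paper writes out the quantifiers inline and uses the equivalent description $x\notin X\wedge\forall y(y\ne x\to y\in X)$.
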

	\begin{proof} 
		By definition, a graph $G$ is an obstruction for $k$-lettericity if and only if $G$ is not a $k$-letter graph, but $G\setminus \{x\}$ is a $k$-letter graph for every $x\in V(G)$. It is
		straightforward now to check that $G$ is an obstruction for $k$-letter graphs if and only if $G$ satisfies the following \mso sentence $Obs_k$
		\begin{align*}
			& \neg \left(\exists X,X_1,\ldots, X_k (\forall x(x\in X) \wedge \varphi_k(X,X_1,\ldots,X_k))\right) \wedge\\
			& \forall x (\exists X,X_1, \ldots,X_k(x\notin X\wedge \forall y(y\ne x \to y\in X) \wedge \varphi_k(X,X_1,\ldots,X_k))).
		\end{align*}
	\end{proof}
	
	We can now derive an implicit bound on the size of obstructions for $k$-lettericity or for any other \mso-definable hereditary property of $k$-letter graphs.
	
	\begin{theorem}\label{thm:obs1} Let $k$ be a positive integer. There is a computable function $f:\mathbb{N}\to \mathbb{N}$ such that any obstruction for \emph{the class of $k$-letter graphs} has at most $f(k)$ vertices. The same holds for any \emph{\mso-definable hereditary property of $k$-letter graphs}. 
	\end{theorem}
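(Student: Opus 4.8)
The plan is to combine three ingredients: the \mso sentence $Obs_k$ defining obstructions for $\mathcal{L}_k$ (\cref{prop:obs}); an upper bound on the linear rank-width of such obstructions, obtained from \cref{lem:chain-graph}(2) and \cref{prop:lettericity-lrw}; and the fact, recalled in the introduction, that $\mathcal{L}_k$ is well-quasi-ordered by the induced subgraph relation, so that it has only finitely many obstructions. These come together through the elementary observation that a \emph{finite} regular language recognised by a deterministic finite automaton with $s$ states contains only words of length less than $s$.

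First I would bound the linear rank-width of an obstruction $G$ for $\mathcal{L}_k$. Since $\lettericity(G)>k$ while $\lettericity(G\setminus\{v\})\le k$ for every vertex $v$, \cref{lem:chain-graph}(2) gives $\lettericity(G)\le 2\lettericity(G\setminus\{v\})+1\le 2k+1$, and hence $\lrwd(G)\le 2k+1=:w$ by \cref{prop:lettericity-lrw}. So every obstruction for $\mathcal{L}_k$ lies in the class $\mathcal{G}_w$ of graphs of linear rank-width at most $w$.

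Next I would move to an automata-theoretic setting. A graph of linear rank-width at most $w$, together with a linear layout witnessing this, can be encoded as a word over a finite alphabet $\Gamma_w$ using one symbol per vertex, in such a way that the set of words that arise as valid encodings is regular, and, by Courcelle's theorem for graphs of bounded clique-width (equivalently, bounded linear rank-width), for every $\mathsf{MSO}_1$ sentence $\varphi$ the set of valid encodings whose graph satisfies $\varphi$ is recognised by a deterministic automaton $\mathcal{A}_{\varphi}$ whose number of states is computable from $\varphi$ and $w$ (see, for instance, \cite{CourcelleE2012}). I would apply this to $\varphi=Obs_k$, which is an \mso sentence over the adjacency relation, and let $\mathcal{A}$ be the product of $\mathcal{A}_{Obs_k}$ with the (fixed, $w$-dependent) automaton for valid encodings; then $\mathcal{A}$ is deterministic, its number of states $s$ is computable from $Obs_k$ and $w$, and its language $L$ is exactly the set of encodings of graphs in $\mathcal{G}_w$ satisfying $Obs_k$, that is --- by the previous paragraph --- the set of encodings of \emph{all} obstructions for $\mathcal{L}_k$. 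Because $\mathcal{L}_k$ has only finitely many obstructions and each admits only finitely many layouts, $L$ is a finite language; hence every word of $L$ has length less than $s$, and since an $n$-vertex obstruction has an encoding of length $n$ lying in $L$, every obstruction has fewer than $s$ vertices. As $Obs_k$ has length $2^{O(k^2)}$ (\cref{prop:obs}) and $w=2k+1$, the number $s$ --- and therefore the bound $f(k):=s$ --- is computable from $k$. The step demanding most care is setting up the word encoding of $\mathcal{G}_w$ and invoking Courcelle's theorem on it in effective form; well-quasi-orderability is used only to guarantee that $L$ is finite, which is precisely what makes the automaton bound applicable.

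For the general statement, let $\mathcal{C}\subseteq\mathcal{L}_k$ be hereditary and \mso-definable by a sentence $\theta_{\mathcal{C}}$. As in \cref{prop:obs}, ``being an obstruction for $\mathcal{C}$'' is expressible by an \mso sentence $\theta^{\mathrm{obs}}_{\mathcal{C}}$ --- asserting $\neg\theta_{\mathcal{C}}$ together with the statement that, for every vertex $x$, the graph induced on the other vertices satisfies $\theta_{\mathcal{C}}$ --- which is computable from $\theta_{\mathcal{C}}$. Any obstruction $H$ for $\mathcal{C}$ still satisfies $\lrwd(H)\le 2k+1$: for every $v$ we have $H\setminus\{v\}\in\mathcal{C}\subseteq\mathcal{L}_k$, so $\lettericity(H\setminus\{v\})\le k$, and \cref{lem:chain-graph}(2) applies exactly as above. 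Moreover, being a subclass of the well-quasi-ordered class $\mathcal{L}_k$, the class $\mathcal{C}$ is itself well-quasi-ordered by induced subgraphs and therefore has only finitely many obstructions. Running the argument of the previous paragraph with $Obs_k$ replaced by $\theta^{\mathrm{obs}}_{\mathcal{C}}$ then gives a bound on the number of vertices of obstructions for $\mathcal{C}$, computable from $k$ and $\theta_{\mathcal{C}}$.
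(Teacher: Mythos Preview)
Your argument is correct, but it follows a genuinely different route from the paper's. The paper exploits the fact that every obstruction has lettericity at most $2k+1$ not merely to bound linear rank-width, but to work directly with \emph{word realisations} over a $(2k+1)$-letter alphabet: the \mso formula $Obs_k$ is translated (by substituting $\varphi_E^D$ for $\edg$) into an \mso formula on words, B\"uchi's theorem yields a finite automaton $\cA$ recognising exactly the word realisations of obstructions, and then the Pumping Lemma says that any accepted word longer than the number of states of $\cA$ has a proper \emph{subword} that is also accepted. The crucial structural observation is that a subword of a word realisation of $G$ is a word realisation of a proper induced subgraph of $G$; hence a long obstruction would contain a strictly smaller obstruction as an induced subgraph, contradicting minimality. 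In particular, the paper never invokes well-quasi-orderability.

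Your version instead passes through linear rank-width encodings and Courcelle's automaton, and then uses WQO of $\mathcal{L}_k$ as an external fact to conclude that the obstruction language is finite, whence the DFA size bounds word length. This is valid, but it imports two pieces of heavier machinery: the word encoding of bounded-linear-rank-width graphs compatible with Courcelle's theorem (where some care is needed to ensure one symbol per vertex, or at least linear length), and the WQO result itself. The paper's route is more self-contained and arguably more natural here, since the letter-graph words are already the ``right'' encoding and the subword--induced-subgraph correspondence replaces the appeal to WQO. On the other hand, your approach is more generic: it would work for any \mso-definable hereditary class of bounded linear rank-width known to be WQO, without needing a bespoke word encoding with the subword property.
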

	
	\begin{proof} Any word $w$ over an alphabet $\alphabet$ of length $n$ can be represented by the structure $([n], <, (P_a)_{a\in \alphabet})$ where $<$ is the
		natural order on integers and, for each $a\in \alphabet$, $P_a$ is the predicate equal to $\{i\in [n]\mid w_i=a\}$ (see for instance \cite{Buchi1960}). One can therefore write \mso formulas defining
		properties on words, using  the binary relation $<$ and predicates $(P_a)_{a\in \alphabet}$.
		
		One can easily check that if $w$ is a word realisation of a graph $G=([n],E)$ over a decoder $D=(\alphabet,A)$, then the following \mso formula on words decides whether the vertices $x$ and $y$ are adjacent in $G$
		$$ 
		\varphi_E^D(x,y) := \left(x < y\ \wedge\ \bigvee_{(a,b)\in A} P_a(x)\wedge P_b(y)\right)\  \vee\ \left(y < x\ \wedge\ \bigvee_{(a,b)\in A} P_a(y)\wedge P_b(x)\right).
		$$

		Thus, any \mso formula on $k$-letter graphs over $D$ can be translated into an \mso formula on word realisations over $D$ by replacing any occurrence of
		$\edg(x,y)$ by $\varphi_E^D(x,y)$ (see for instance \cite[Theorem 7.10]{CourcelleE2012}).
		
		Using now the fact that the set of obstructions for $k$-letter graphs is \mso-definable 
		(by \cref{prop:obs}) and each obstruction has lettericity at
		most $2k+1$ (which follows from \cref{lem:chain-graph}), we conclude the existence of an \mso formula $\psi_k$ on words over an alphabet with $2k+1$ letters
		such that a word satisfies $\psi_k$ if and only if it is a word realisation of an obstruction for $k$-letter graphs. By B\"{u}chi's Theorem \cite{Buchi1960},
		there is a finite state word automaton $\cA$ such that a word realisation satisfies $\psi_k$ if and only if it is accepted by $\cA$. By the Pumping Lemma on
		finite state word automaton (see e.g.\ \cite{Sip96}), any word accepted by $\cA$ and of size strictly greater than the number of states in $\cA$ has a proper subword accepted by
		$\cA$. Since any subword of a word realisation of $G$ is a word realisation of an induced subgraph of $G$, we conclude that the size of any obstruction is bounded from above by the number of states in $\cA$. 

		The second assertion is proved similarly. Let $\cC$ be a hereditary class of $k$-letter graphs \mso-definable by $\theta_{\cC}(X)$, where $G$ satisfies $\theta_{\cC}(Z)$ if
		and only if $G[Z] \in \cC$. Then, the set of obstruction for $\cC$ is \mso-definable by 
		the following formula $\psi_{\cC}$, which defines minimal graphs that do not satisfy $\theta_{\cC}(Z)$:
		\begin{align*}
			\psi_{\cC} := \neg(\exists X(\forall x(x\in X)\wedge \theta_{\cC}(X))\ \wedge
			\ \forall x (\exists X(x\notin X\wedge \forall y(y\ne x\to y\in X) \wedge \theta_{\cC}(X))).
		\end{align*}
		
		Since, as before, any obstruction for $\cC$ has lettericity at most $2k+1$, formula $\psi_{\cC}$
		can be translated into an \mso formula on words over an alphabet with $2k+1$ letters such
		that a word satisfies the formula if and only if it is a word realisation of an obstruction for $\cC$.
		As in the previous case, this implies a bound on the number of vertices in such obstructions.
	\end{proof}

	We note that while Theorem \ref{thm:obs1} guarantees a bound on the size of obstructions for $k$-lettericity that depends only on $k$, this dependence
	can involve iterated exponentials. Indeed, if $a$ is the maximum number of states of complete and deterministic automaton associated with atomic formulas,
	then it is proved in \cite[Corollary 6.30]{CourcelleE2012} that the number of states of the minimal automaton associated with any formula $\varphi$ can be bounded
	by $exp(h,m(a^m+h))$ where $h$ and $m$ are two values associated to $\varphi$, both upper-bounded by $|\varphi|$, and 
	$exp:\mathbb{N}\times \mathbb{N}\to \mathbb{N}$ is the function defined by $exp(0,n)=n$ and $exp(h+1,n)=2^{exp(h,n)}$.  In the next section, we will provide an explicit
	single exponential bound with a direct proof.
	
	We conclude this section with a remark on the relationship between bounded lettericity of graphs and geometric griddability of permutations.
	The notion of \emph{geometric griddability} was introduced in \cite{AlbertABRV2013} in order to study well-quasi-ordering and rationality of permutation classes.
	It is known that geometrically griddable classes of permutations are in bijection with \mso-definable trace languages \cite{AlbertABRV2013}, and it is proved in \cite{AlecuFKLVZ2022} that a class of permutations is geometrically griddable if and only if its associated permutation graph class has bounded lettericity. 
	Since geometrically griddable classes of permutations and graph classes of bounded lettericity are both well-quasi-ordered by the pattern containment relation and the induced subgraph relation respectively, one may be interested in other properties shared by the two concepts.
	In recent work \cite{braunfeld2023}, Braunfeld showed that geometric griddablity is \mso-definable
	and applied this result to answer the questions from \cite{AlbertABRV2013} about the computability of obstructions and 
	generating functions for geometric griddable classes of permutations. As a consequence, Braunfeld also established the 
	existence of an upper bound on the maximum size of an obstruction for a geometrically griddable class that depends only on the size of a geometric specification of the class.
	These results parallel ours about classes of graphs of bounded lettericity, and the proof strategies are very similar. We refer the reader to \cite{braunfeld2023} for further details.

	
	\section{A single exponential upper bound on the size of obstructions}
    \label{sec:size-obs}
	
	
	We have seen in Theorem \ref{thm:obs1} that any obstruction for $k$-lettericity has at most $f(k)$ vertices, for some function $f:\mathbb{N}\to \mathbb{N}$. The aim of this section is to show that $f(k)$ can be bounded from above by $2^{O(k^2\log k)}$. In \cref{sec:auxiliary-tools}, we prove a number of auxiliary results that we use in \cref{sec:upper-bound} to derive the bound.
	
	We say that a graph $G$ is a {\em critical letter graph} if, for any proper induced subgraph $H$ of $G$, $\lettericity(H) < \lettericity(G)$. 
	If, in addition, $\lettericity(G) = k$, we call $G$ a {\em critical $k$-letter graph}.
	A word $w'$ is a \emph{subword} of a word $w$, if $w'$ is obtained from $w$ by removing some letters.
	A \emph{factor} of $w$ is a \emph{contiguous} subword of $w$, \ie, a subword that can be obtained from $w$ by removing some
	(possibly empty) prefix and suffix.

	\subsection{Auxiliary tools}
	\label{sec:auxiliary-tools}
	
	In an undirected graph $G$, two vertices $x$ and $y$ are
	{\em twins} if $x$ and $y$ have the same neighbourhood in $V(G)\setminus \{x,y\}$.
	The twin relation is an equivalence relation. A \emph{twin class} is an equivalence class of this relation. Notice that if $X$ is a twin class, then $X$ is either a clique or an independent set.

 The overall intuition behind our proof of the bound is simple: we will show that a critical letter graph (in particular, an obstruction) cannot have long factors that use few letters. The next lemma is a base case for induction (which we will do in detail in Section 4.2), but it also serves as a proof of concept for our method.
	
	\begin{lemma}\label{lem:large-twin-classes} Let $G$ be a graph with a twin class $X$ of size at least $4$. Then, for every $x\in X$,
		$\lettericity(G)=\lettericity(G\setminus \{x\})$.
	\end{lemma}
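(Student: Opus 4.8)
The plan is to take a $k$-letter realisation of $G\setminus\{x\}$ and extend it to a $k$-letter realisation of $G$ by inserting $x$ at a carefully chosen position with a carefully chosen letter; since $\lettericity(G\setminus\{x\})\le\lettericity(G)$ always holds, this suffices. Let $(\ell,c)$ be a $k$-letter realisation of $G\setminus\{x\}$ over some twin-free decoder $D=(\alphabet,A)$, and write $w=w(\ell,c)$ for the associated word. Since $X$ is a twin class of $G$, the three vertices of $X\setminus\{x\}$ (there are at least three of them) are mutually twins in $G\setminus\{x\}$; by the first part of \cref{lem:chain-graph} any two of them either get the same letter or get letters $a,b$ with $G[\ell^{-1}(a),\ell^{-1}(b)]$ a chain graph, and a short argument shows that in fact pairwise twins that are additionally pairwise adjacent-or-nonadjacent uniformly force a common letter once there are at least three of them — the chain-graph structure leaves no room to separate three mutually indistinguishable vertices with two distinct letters unless those letters are "interchangeable" at those positions. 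So I may assume $\ell$ assigns a single letter $a$ to all of $X\setminus\{x\}$, and $x$ gets some letter, possibly different.

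The key step is to relocate $x$ so that it sits immediately adjacent (in $c$) to a vertex of $X\setminus\{x\}$ and then argue that giving $x$ the letter $a$ and placing it right next to that vertex reproduces all of $G$'s adjacencies. Concretely: pick $x'\in X\setminus\{x\}$ that is $c$-closest to $x$ among $X\setminus\{x\}$, say $c(x')<c(x)$ without loss of generality, with no other vertex of $X$ strictly between them; then at least one further vertex $x''\in X\setminus\{x\}$ has $c(x'')<c(x')$ or $c(x'')>c(x)$, and using that $x,x',x''$ are all mutual twins in $G$ one checks that every vertex $y\notin X$ is adjacent to $x$ iff it is adjacent to $x'$ iff it is adjacent to $x''$, and moreover that the letter realisation is "monotone" enough that $y$'s adjacency to $x$ depends only on whether $c(y)$ lies to the left or right of $x$'s position together with $\ell(y)$ and $\ell(x)$. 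Having three twins lets me bracket $x$'s position: whatever side of $x'$ the vertex $y$ is on, there is a witness in $X$ telling me the adjacency must match, so I can safely slide $x$ to sit immediately after $x'$ in the ordering, reassign $\ell(x):=a$, and verify conditions (C1)–(C4) of \cref{thm:characterisation} for the modified assignment: (C1)–(C3) hold because $X$ is a clique or independent set and the chain-graph/homogeneity structure is inherited, and (C4) holds because the compatibility graph of the new assignment is the old one with $x$'s arcs made identical to $x'$'s arcs, which cannot create a circuit.

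I expect the main obstacle to be the bookkeeping in the second step: namely, proving rigorously that sliding $x$ next to $x'$ and relabelling it does not flip the adjacency between $x$ and some vertex $y$ that lies strictly between $x'$'s old position and $x$'s old position, or between $x$ and a vertex $y$ with $\ell(y)=a$ (edges within $\ell^{-1}(a)$). For the former, the point is that any such intermediate $y$ is distinguished from $x'$ the same way it is distinguished from $x$ (both are in $X$, hence twins), so the chain-graph condition on $G[\ell^{-1}(a),\ell^{-1}(\ell(y))]$ forces $y$'s behaviour to be consistent across the whole block of $X$-vertices, and moving $x$ within that block is harmless; this is exactly where size $\ge 4$ (three vertices in $X\setminus\{x\}$) is used, to have enough witnesses on both sides. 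For the latter, since $X$ is a clique or independent set and $x\in X$, giving $x$ the letter $a$ and making $\ell^{-1}(a)$ respect $(a,a)\in A$ or $(a,a)\notin A$ just extends the existing clique/independent-set structure. Once these two local checks are in place, \cref{thm:characterisation} immediately yields a $k$-letter realisation of $G$, completing the proof.
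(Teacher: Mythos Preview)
Your proposal has a genuine gap at its central claim, namely that in an optimal realisation of $G\setminus\{x\}$ one may assume all of $X\setminus\{x\}$ receives a single letter $a$. You justify this with ``a short argument shows that in fact pairwise twins that are additionally pairwise adjacent-or-nonadjacent uniformly force a common letter once there are at least three of them,'' but this is precisely where the content of the lemma lies, and no such short argument exists. Indeed, the paper's proof explicitly handles the situation where the three vertices $x_1,x_2,x_3\in X\setminus\{x\}$ (with $c(x_1)<c(x_2)<c(x_3)$) carry distinct letters: if none of $(\ell(x_i),\ell(x_i))$ is in $A$, then since $X$ is a clique one must have $(\ell(x_1),\ell(x_2)),(\ell(x_2),\ell(x_3))\in A$, and a twin argument then forces $x_2$ to be the \emph{unique} vertex with letter $\ell(x_2)$, so one can safely add the loop $(\ell(x_2),\ell(x_2))$ to the decoder. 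Your chain-graph hand-wave does not produce this conclusion and does not rule out the mixed-letter scenario.

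There is also a persistent confusion in your second paragraph: you speak of ``$x$ gets some letter, possibly different,'' and of picking $x'\in X\setminus\{x\}$ ``that is $c$-closest to $x$,'' but $x\notin V(G\setminus\{x\})$, so $\ell(x)$ and $c(x)$ are undefined in the realisation you are working with. The task is to \emph{insert} $x$ into the word, not to slide it; the paper does this by placing $x$ immediately after some $x_i$ with $(\ell(x_i),\ell(x_i))\in A$ and giving it letter $\ell(x_i)$, after first ensuring (via the argument above) that such an $x_i$ exists. Once you accept that the three twins need not share a letter, the rest of your outline collapses, because your verification of (C1)--(C4) presupposes the common letter $a$.
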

	\begin{proof} Because $\lettericity(G\setminus \{x\})\leq \lettericity(G)$, it suffices to prove the inverse inequality. Also,
		since $\lettericity(\compl{H}) = \lettericity(H)$ for all graphs $H$ (see Lemma \ref{lem:chain-graph}), we can assume without loss of generality that $X$ is a clique. 
		Let $x\in X$ and assume that $\lettericity(G\setminus \{x\}) = k$, and let $(\ell,c)$ be a $k$-letter realisation of
		$G\setminus \{x\}$ over some decoder $D=([k],A)$.  Consider three vertices $x_1,x_2,x_3\in X\setminus \{x\}$ with $c(x_1) < c(x_2) < c(x_3)$, which exist because $|X|\geq 4$. 
		We analyse two cases:
		\begin{itemize}
			\item There exists an $i\in [3]$ such that $(\ell(x_i),\ell(x_i))\in A$. In this case, $(\ell_1,c_1)$ is a $k$-letter realisation of $G$, where
			\begin{align*}
				\ell_1(z) &=\begin{cases} \ell(z) & \textrm{if $z\ne x$},\\
					\ell(x_i) & \textrm{otherwise}. \end{cases}\\
				c_1(z) & = \begin{cases} c(z) & \textrm{if $z\ne x$ and $c(z)\leq c(x_i)$},\\
					c(z)+1 & \textrm{if $z\ne x$ and $c(z)> c(x_i)$},\\
					c(x_i)+1 & \textrm{otherwise}.
				\end{cases}
			\end{align*}
			
			\item For each $i\in [3]$, $(\ell(x_i),\ell(x_i))\notin A$. In this case, as $X$ is a clique, both $(\ell(x_1),\ell(x_2))$ and $(\ell(x_2),\ell(x_3))$ must be in
			$A$. Now, if there is a vertex $z\in V(G)\setminus \{x\}$ with $\ell(z)=\ell(x_2)$ and $c(z)<c(x_3)$, then $z$ would distinguish $x_2$ and $x_3$, which is impossible
			as they are twins. Similarly, there is no $z\in V(G)\setminus \{x\}$ with $\ell(z)=\ell(x_2)$ and $c(z)>c(x_1)$. Therefore, $x_2$ is the unique
			vertex $z$ in $V(G)\setminus \{x\}$ with $\ell(z)=\ell(x_2)$. Let $D'=([k],A\cup \{(\ell(x_2),\ell(x_2))\})$. 
			We note that $(\ell, c)$ is still a letter realisation of $G \setminus \{x\}$ over the decoder $D'$, and we can proceed as in the first case. 
		\end{itemize}
	\end{proof}
	
	When considering factors which use more than a single letter, things quickly get much more complicated. We would like to build a mechanism that generalizes \cref{lem:large-twin-classes}: if we have a factor using some number $t$ of letters, and that factor is very long, we would like to deduce that some of the letters appearing in the factor are superfluous, in the sense that their presence does not influence the lettericity of the graph, and hence the graph cannot be a critical letter graph. This requires a number of technicalities, which we will now introduce. 
	
    Let $D=(\Sigma,A)$ be a decoder. The \emph{asymmetry graph of $D$} is the directed graph
	with vertex set $\Sigma$, where, for every ordered pair $a,b\in \Sigma$, $(a,b)$ is an arc in the graph if and only if $(a,b) \in A$ and $(b,a) \not\in A$.
	If there is an arc between $a$ and $b$ in the asymmetry graph, we will say that $a$ and $b$ are \emph{dependent in $D$}, otherwise we will say that $a$ and $b$ are \emph{independent in $D$}. 
	To justify this terminology, we observe that the adjacency of two vertices labelled by independent letters is independent of their relative positions 
	in a word representing the graph. In other words, if two distinct letters $a$ and $b$ are independent, then, regardless of their positions in the word, they describe a bipartite graph which is either edgeless (if neither $(a,b)$ nor $(b,a)$ belongs to the decoder) or complete bipartite (if both $(a,b)$ and $(b,a)$ belong to the decoder).
	
	Let $w$ be a word over an alphabet $\Sigma$.
	For two distinct letters $a,b \in \Sigma$, we denote by $\inter_w(a, b)$ the largest $t$ such that $w$ contains a subword which is the $t$-fold concatenation of $ab$. We note that $\inter_w(a, b)$ and $\inter_w(b, a)$ differ by at most 1. We say that $a$ and $b$ \emph{interlace in $w$} if $w$ contains $abab$ or $baba$ as a subword, i.e $\max \{\inter_w(a, b), \inter_w(b, a) \} \geq 2$.
	The following observation follows directly from the definitions.

	\begin{observation}\label{obs:dependency}
		Let $(\ell,c)$ be a letter realisation of a graph $G$ over a decoder $D=(\Sigma,A)$, and
		let $a, b \in \Sigma$ be two distinct letters that interlace in $w(\ell,c)$. 
		Let $A,B \subseteq V(G)$ be the sets of vertices represented by the letters $a$ and $b$,
		respectively.
		Then $a$ and $b$ are independent in $D$ if and only if the bipartite graph $G[A,B]$ is homogeneous.
	\end{observation}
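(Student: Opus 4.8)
The plan is to unwind both directions directly from the definition of a letter realisation, using the interlacing hypothesis only for the backward implication. I would not expect any serious difficulty; the argument is a short case analysis.

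For the forward direction, suppose $a$ and $b$ are independent in $D$. By the definition of the asymmetry graph, this means that either both $(a,b)$ and $(b,a)$ are arcs of $D$, or neither of them is. Fix any $x\in A$ and $y\in B$. By the definition of a letter graph, $xy\in E(G)$ if and only if $(\ell(x),\ell(y))=(a,b)$ is an arc of $D$ with $c(x)<c(y)$, or $(\ell(y),\ell(x))=(b,a)$ is an arc of $D$ with $c(y)<c(x)$. If neither $(a,b)$ nor $(b,a)$ is an arc, both clauses fail and $x,y$ are non-adjacent, so $G[A,B]$ is edgeless; if both are arcs, then exactly one clause holds according to whether $c(x)<c(y)$, so $x,y$ are adjacent and $G[A,B]$ is complete bipartite. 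In either case $G[A,B]$ is homogeneous.

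For the backward direction, suppose $a$ and $b$ are dependent in $D$; after possibly exchanging the names of $a$ and $b$, assume $(a,b)$ is an arc of $D$ but $(b,a)$ is not. Since $a$ and $b$ interlace in $w(\ell,c)$, the word contains $abab$ or $baba$ as a subword, and in both cases it contains the pattern $aba$; let $x_1,x_2\in A$ and $y\in B$ be vertices realising it, so that $c(x_1)<c(y)<c(x_2)$. Applying the adjacency rule: since $(a,b)$ is an arc of $D$ and $c(x_1)<c(y)$, we have $x_1 y\in E(G)$; on the other hand, for the pair $x_2,y$ the first clause fails because $c(x_2)>c(y)$ and the second fails because $(b,a)$ is not an arc of $D$, so $x_2 y\notin E(G)$. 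Hence $G[A,B]$ contains both an edge and a non-edge, and is therefore not homogeneous. The only step requiring a little care is extracting, from the definition of interlacing, a triple of vertices witnessing an ``$a$ before $b$'' occurrence and an ``$a$ after $b$'' occurrence simultaneously --- which is exactly what forces a non-homogeneous pattern once the two letters are dependent.
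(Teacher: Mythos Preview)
Your proof is correct and is exactly the direct unwinding of the definitions that the paper has in mind; the paper itself gives no explicit proof, merely noting that the observation ``follows directly from the definitions.'' Your case analysis for both directions is clean, and the extraction of an $aba$ subword from the interlacing hypothesis to witness an edge and a non-edge in the dependent case is precisely the point.
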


	\begin{lemma}\label{lem:interlacing} 
		Let $w$ be a word over an alphabet $\Sigma$, and let $x,y,z$ be pairwise distinct letters in $\Sigma$. If there is a copy of $z$ between any two copies of $y$ in $w$, then $\inter_w(x, z) \geq \lfloor \inter_w(x, y)/2\rfloor$.
	\end{lemma}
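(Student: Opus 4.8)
The plan is to extract from a long $xy$-alternating subword a long $xz$-alternating subword, using the hypothesis that every gap between consecutive $y$'s contains a $z$. Write $t = \inter_w(x,y)$, so $w$ contains a subword of the form $x y_1 x y_2 \cdots x y_t$ (the $y_i$ being occurrences of $y$, and the $x$'s being occurrences of $x$), where these $2t$ occurrences appear in this order inside $w$. Group the $t$ occurrences of $y$ into $\lfloor t/2 \rfloor$ consecutive pairs $(y_{2j-1}, y_{2j})$ for $j = 1, \ldots, \lfloor t/2 \rfloor$. By hypothesis, between $y_{2j-1}$ and $y_{2j}$ there is an occurrence $z_j$ of $z$; and between $y_{2j}$ and $y_{2j+1}$ there is an occurrence $x_j$ of $x$ (namely the $x$ that the alternating subword placed there). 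This gives occurrences, in the order they appear in $w$, of the form
\[
x' \; z_1 \; x_1 \; z_2 \; x_2 \; \cdots \; z_{\lfloor t/2\rfloor - 1} \; x_{\lfloor t/2\rfloor-1} \; z_{\lfloor t/2\rfloor},
\]
where $x'$ is the first $x$ of the alternating subword. Reading these off, $w$ contains $(xz)^{\lfloor t/2 \rfloor}$ as a subword (or, being slightly careful about whether $x'$ is available before $z_1$, at least $(xz)^{\lfloor t/2\rfloor - 1}xz$, which is the same count), hence $\inter_w(x,z) \geq \lfloor t/2 \rfloor = \lfloor \inter_w(x,y)/2 \rfloor$.

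The one point requiring care is the bookkeeping of which occurrence sits strictly before which: the occurrence $z_j$ of $z$ guaranteed by the hypothesis lies strictly between $y_{2j-1}$ and $y_{2j}$, and the occurrence $x_j$ of $x$ from the alternating subword lies strictly between $y_{2j}$ and $y_{2j+1}$ (these exist for $j \le \lfloor t/2\rfloor - 1$ since there are $x$'s interleaved with all $t$ of the $y_i$'s). Thus $z_1 < x_1 < z_2 < x_2 < \cdots$ in the position order of $w$, and prepending the initial $x$ of the alternating subword (which precedes $y_1$, hence precedes $z_1$) yields the alternating pattern $x z x z \cdots$ of length $2\lfloor t/2\rfloor$, i.e.\ $\inter_w(x,z) \ge \lfloor t/2\rfloor$.

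I do not anticipate a real obstacle here; the main thing to get right is the off-by-one accounting in the pairing of the $y$-occurrences and the placement of the interleaved $x$-occurrences, so that the count $\lfloor t/2\rfloor$ comes out exactly as claimed rather than $\lfloor t/2\rfloor - 1$. If that threatens to be an issue, one can simply argue the weaker-looking bound and then note that a single extra $xz$ or $zx$ from the endpoints of the alternating subword recovers the floor, or alternatively phrase the extraction in terms of $\inter_w$ directly: every block $y_{2j-1}\,(\text{stuff})\,y_{2j}\,(\text{stuff})\,y_{2j+1}$ contributes one $x$ and one $z$ in the correct relative order, and there are $\lfloor t/2\rfloor$ disjoint such contributions.
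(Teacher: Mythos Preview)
Your argument is correct and follows essentially the same approach as the paper: from the $(xy)^t$ subword, you alternate between the $x$'s already present (in the ``odd'' gaps before/between the $y$'s) and the $z$'s guaranteed by the hypothesis (in the ``even'' gaps between consecutive $y$'s) to build an $(xz)^{\lfloor t/2\rfloor}$ subword. The paper phrases the same extraction slightly more cleanly by naming the factors $A_1,\dots,A_t$ between successive $y$'s and observing that odd-indexed factors contain an $x$ while even-indexed ones contain a $z$, which sidesteps the off-by-one hedging in your write-up.
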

	
	\begin{proof}
		We can assume that $\inter_w(x, y)\geq 2$, as otherwise the statement holds trivially. 
		By definition, $w$ must contain a subword $w'  = xy\dots xy$ which is the concatenation of $\inter_w(x, y)$ copies of $xy$. 
		We consider the factors of $w$ between successive $y$s from $w'$; specifically, $A_1$ is the factor before the first $y$, 
		$A_2$ is the factor between the first and second $y$, and so on, up to $A_{\inter_w(x, y)}$, which is the factor between the penultimate and the final $y$. 
		We note that each of the odd numbered factors contains an $x$ (the ones from $w'$), while each of the even numbered factors contains a $z$ (by assumption). 
		It follows that $w$ contains a subword consisting of 
		$\lfloor \inter_w(x, y)/2\rfloor$ copies of $xz$ concatenated together, as required.
	\end{proof}

    \cref{lem-decoders-new} below is the culmination of our set-up, and the core technical part of the proof. The reader may find the following intuitive explanation of it useful. We start with a word $w$ realising a graph $G$. Assuming that $w$ has a suitably long factor $w'$ that uses $t$ letters (for a given $t$), we wish to find some certificate that $G$ cannot be letter-critical. By using an inductive hypothesis (the details of which are in Section~4.2), we can assume that $w'$ contains a long ``universal subword'' for $t$-letter graphs, so we will take this assumption as part of the statement of the lemma. We think of this subword as being coloured blue, of the rest of $w'$ as being coloured red, and of the rest of $w$ as being coloured black (and the vertices of $G$ inherit the corresponding colours from this particular letter realisation). The essence of the lemma then says that any decoder representing the graph induced by the blue and black vertices has ``sufficient power'' to represent the full graph $G$ (with the red vertices included) at no additional cost. Thus the presence of the superfluous red vertices is our certificate that $G$ is not letter-critical.
	
	To simplify the statement of \cref{lem-decoders-new}, let us introduce one more piece of notation. 
	For a word realisation $w=w_1w_2\ldots w_n$ of an $n$-vertex graph $G$ over a
	decoder $D=(\Sigma,A)$ and a subword $w'=w_{j_1}w_{j_2}\ldots w_{j_s}$ of $w$ (with a fixed embedding into $w$), we  denote by $\Ind_w(w')$ the set of indices $\{j_1,j_2,\ldots, j_s\}$.

	\begin{lemma}\label{lem-decoders-new}
		Let $\Sigma=\{a_1,\ldots, a_k\}$ and $\Sigma'=\{b_1,\ldots, b_m\}$ be two alphabets, and let $D$ and $D'$ be two decoders over $\Sigma$ and $\Sigma'$, respectively.  
		Suppose that a graph $G$ has a letter realisation $(\ell, c)$ over $D$ 
		such that $w:=w(\ell,c)$ has a factor $w_jw_{j+1}\ldots w_{j+p}$
            that (1) contains exactly $t$ pairwise distinct letters $a_{s_1}, a_{s_2}, \ldots, a_{s_t}$;
		and (2) contains a periodic subword $\beta$ with a period $a_{s_1} a_{s_2} \ldots a_{s_t}$ concatenated $2^{t-1}+1$ times.
		Call the vertices in $B := \{x: c(x) \in \Ind_w(\beta)\}$ blue and the vertices in $R := \{x: c(x) \in I \setminus \Ind_w(\beta)\}$ red, where $I = \{j,j+1,\ldots,j+p\}$. If the graph $G' := G \setminus R$ has a letter realisation $(\ell', c')$ over $D'$ such that blue vertices with the same letter in $(\ell, c)$ are assigned the same letter in $(\ell', c')$, then $G$ has a letter realisation $(\ell'', c'')$ over $D'$.
	\end{lemma}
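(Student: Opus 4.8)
The plan is to construct $(\ell'', c'')$ by taking the realisation $(\ell', c')$ of $G' = G \setminus R$ and "reinserting" each red vertex into it, reusing a letter and a position borrowed from a suitable blue vertex. The key structural fact I would establish first is the following: because $\beta$ is periodic with period $a_{s_1}\cdots a_{s_t}$ repeated $2^{t-1}+1$ times, the letters $a_{s_1},\ldots,a_{s_t}$ pairwise interlace within $\beta$ — in fact each ordered pair $(a_{s_i}, a_{s_j})$ occurs as a subword of $\beta$ at least $2^{t-1}$ times consecutively. By \cref{obs:dependency}, for every pair of these $t$ letters that is \emph{independent} in $D$, the corresponding blue classes induce a homogeneous bipartite graph; and more importantly, since $\beta$ sits inside the factor that contains \emph{exactly} the $t$ letters $a_{s_1},\ldots,a_{s_t}$, the interlacing pattern is confined to these blue classes. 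I would then argue, using that blue vertices keep their letters in $(\ell',c')$, that the relevant dependency/independency structure among the $t$ blue classes is "recorded" inside $D'$ well enough to faithfully re-create the adjacencies involving red vertices.

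The core combinatorial step is a \emph{type-counting} argument. Each red vertex $r \in R$ is labelled by one of the $t$ letters $a_{s_q} = \ell(r)$, and its adjacency to the rest of $G$ is governed by $D$. The only vertices whose adjacency to $r$ depends on $r$'s position are those labelled (in $(\ell,c)$) by a letter \emph{dependent} on $a_{s_q}$ in $D$; call the set of such letters $\mathrm{Dep}(a_{s_q}) \subseteq \Sigma$. Since $r$ lies in the factor indexed by $I$, and that factor contains only the $t$ letters $a_{s_1},\ldots,a_{s_t}$, the vertices labelled by $\mathrm{Dep}(a_{s_q})$ that $r$ can "see differently" from a blue vertex of the same letter are partitioned by position into at most $t$ intervals relative to the blue copies of $a_{s_q}$ inside $\beta$. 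Because $\beta$ contains $2^{t-1}+1$ blue copies of $a_{s_q}$, there are $2^{t-1}$ "gaps" between consecutive blue copies, and each such gap determines a subset of $\mathrm{Dep}(a_{s_q})$ (the letters whose dependency-adjacency to a vertex placed in that gap differs). There are at most $2^{|\mathrm{Dep}(a_{s_q})|} \le 2^{t-1}$ possible such subsets when we restrict attention to the $t-1$ letters other than $a_{s_q}$ (self-loops are handled separately via a clique/independent-set case as in \cref{lem:large-twin-classes}). By pigeonhole, \emph{every} position that a red vertex of letter $a_{s_q}$ occupies in $w$ is adjacency-equivalent, with respect to $\mathrm{Dep}(a_{s_q})$-labelled vertices, to some gap between two consecutive blue copies of $a_{s_q}$ in $\beta$ — hence adjacency-equivalent to placing it immediately after some particular blue vertex $b(r)$ of the same letter. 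I would then set $\ell''(r) := \ell'(b(r))$ and insert $r$ into the order $c'$ immediately after $b(r)$ (breaking ties among several red vertices assigned to the same blue vertex by preserving their relative $c$-order).

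It then remains to verify that $(\ell'', c'')$, defined to agree with $(\ell', c')$ on $G'$ and extended to $R$ as above, realises $G$ over $D'$. There are three kinds of pairs to check: blue/black–blue/black pairs (inherited from $(\ell',c')$, unchanged), red–blue/black pairs, and red–red pairs. For a red–blue/black pair $\{r, v\}$: if $\ell(v) \notin \mathrm{Dep}(\ell(r))$ then adjacency of $r$ to $v$ in $G$ equals adjacency of $b(r)$ to $v$ in $G$ (homogeneous behaviour, by \cref{obs:dependency} and the fact that $\ell(r), \ell(v)$ are independent in $D$ but interlace in $w$), which equals adjacency of $b(r)$ to $v$ in $G'$, which equals adjacency of $r$ to $v$ in the new graph since $r$ and $b(r)$ share a letter and $r$ sits right next to $b(r)$; if $\ell(v) \in \mathrm{Dep}(\ell(r))$, the choice of $b(r)$ was precisely engineered so that $r$ and $b(r)$ are distinguished by no such $v$ — so again the adjacency matches $b(r)$'s. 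Red–red pairs are handled by noting any two red vertices with dependent letters either occupy the same gap (then they, too, are adjacency-equivalent to a consecutive pair of blue copies, and we order them compatibly) or different gaps (then they behave like the blue copies bounding those gaps). I expect the main obstacle to be this last bookkeeping, especially making the "each red position equals some blue gap" statement precise when a red vertex lies \emph{outside} the span of $\beta$ but still inside the factor $I$ — one must use that the factor $I$ contains only the $t$ letters, so positions in $I \setminus \Ind_w(\beta)$ before the first (resp.\ after the last) blue copy of $a_{s_q}$ are adjacency-equivalent to the first (resp.\ last) gap. Once the position-to-gap map is pinned down, verifying the three cases is routine case analysis of the decoder $D'$ against $D$.
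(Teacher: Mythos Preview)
Your plan has a genuine gap that I do not see how to close. The crux is that inserting each red vertex $r$ next to a chosen blue vertex $b(r)$ in the $c'$-order, with $\ell''(r) := \ell'(b(r))$, presupposes that the decoder $D'$ behaves on the letters $b_i := \ell'(\text{blue }a_{s_i}\text{-vertex})$ the same way $D$ behaves on $a_{s_i}$. You never establish this. Concretely, take two red vertices $r,r'$ with dependent letters $a_{s_i},a_{s_j}$, say $(a_{s_i},a_{s_j})\in A(D)$, $(a_{s_j},a_{s_i})\notin A(D)$, and $c(r)<c(r')$, so $rr'\in E(G)$. Your construction gives $\ell''(r)=b_i$, $\ell''(r')=b_j$ and (preserving $c$-order) $c''(r)<c''(r')$; for $(\ell'',c'')$ to realise $G$ you then need $(b_i,b_j)\in A(D')$. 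Nothing in your argument forces this. In fact $D'$ may reverse the arc, in which case the correct move is to reverse the insertion order --- and whether to reverse is a \emph{global} decision per connected component of the asymmetry graph of $D$ restricted to $\{a_{s_1},\ldots,a_{s_t}\}$, not something readable from a single red vertex and its gap.

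This is exactly where the $2^{t-1}+1$ repetitions are actually used in the paper, and your proposal does not exploit them in that way. Via \cref{lem:interlacing} one shows (induction on distance in the asymmetry component, halving the interlacing count at each step) that any two $b_i,b_j$ from the same component interlace in $w(\ell',c')$; then \cref{obs:dependency} pins down the dependency structure and one proves that, on each component, $\varphi:a_{s_i}\mapsto b_i$ either preserves all arcs of $D$ or reverses all of them. The paper then rearranges $c'$ by transpositions so that the blue vertices of each component occupy a contiguous interval, and replaces that interval wholesale by the $\varphi$-image (or its reversal) of the \emph{original} subword of $w$ on the corresponding blue and red vertices. Red--red adjacencies within a component are then correct for free, because they are encoded directly by $w$.

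A secondary problem: your pigeonhole step is stated backwards. Having $2^{t-1}$ gaps and at most $2^{t-1}$ types yields that two gaps share a type, not that every red type is realised by some gap. Even the weaker claim --- that every red $a_{s_q}$-vertex is adjacency-equivalent (with respect to blue vertices) to some blue $a_{s_q}$-vertex --- is false in general: a red $a_{s_q}$-vertex sitting strictly between two blue dependent-letter vertices inside a single period of $\beta$ has a type matched by no blue $a_{s_q}$-vertex. And adjacency-equivalence with respect to blue vertices says nothing about other \emph{red} vertices of dependent letters lying in the same gap, which is precisely the case your red--red verification glosses over.
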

	
	\begin{proof}
		Without loss of generality, assume that $\{ s_1, s_2, \ldots, s_t \}~=~[t]$ and
		denote $S := \{a_1, \dots, a_t\}$. Let $\varphi: S \to \Sigma'$ be the map such that 
		$ \varphi(\ell(x)) := \ell'(x)$, for each blue vertex $x \in B$. 
		Without loss of generality we may assume that $\varphi$ is injective by introducing twin vertices in $D'$, if necessary; by \cref{obs:twin-decoder}, any graph admitting a letter realisation over the modified $D'$ also does so over the original $D'$.  
		Thus, without loss of generality we further assume that $\varphi(a_i) = b_i$ for every $i \in [t]$.
		
		Let $C_1, \dots, C_r$ be the connected components of the asymmetry graph of $D[S]$. For every $i \in [r]$, we denote by $\Gamma_i$ the
		set of letters in $\Sigma'$ corresponding to the letters of the connected component $C_i$, \ie, $\Gamma_i := \{ \varphi(a) ~|~ a \in V(C_i) \}$.
		Let $\beta'$ denote the \emph{blue subword} of $w(\ell', c')$, \ie, the subword consisting of the letters representing the blue vertices of $G \setminus R$.
		
		\begin{claim}\label{claim:interlace}
			For any $q \in [r]$, any two distinct letters in $\Gamma_q$ interlace in $w(\ell', c')$.        
		\end{claim}
		\begin{pocd}{\ref{claim:interlace}}
			Let $b_i$ and $b_j$ be two distinct letters in $\Gamma_q$, and let $a_i$ and $a_j$ be their preimages in $V(C_q)$. 
			We will show, by induction on the distance $d$ between $a_i$ and $a_j$ in
			the underlying graph of $C_q$, that $\inter_{\beta'}(b_i,b_j) \geq 2^{t-d}$.
			Since $C_q$ has at most $|S| = t$ vertices and $\beta'$ is a subword of $w(\ell',c')$, this will imply that $b_i$ and $b_j$ interlace in $w(\ell', c')$.
			
			If $d=1$, then $a_i$ and $a_j$ are dependent in $D$, and hence the subword of $\beta$ consisting of the $(2^{t-1} + 1)$-fold concatenation of $a_ia_j$
			corresponds to a non-homogeneous graph. Since the same graph is represented by $b_i$ and $b_j$ in $w(\ell', c')$, the letters $b_i$ and $b_j$ are dependent in $D'$, and  $\beta'$ contains a subword which is the $(2^{t-1} + 1)$-fold concatenation of either $b_ib_j$ or $b_jb_i$; in particular, $\inter_{\beta'}(b_i,b_j) \geq 2^{t-1}$. 
			Furthermore, there is a $b_i$ between any two $b_j$s, and a $b_j$ between any two $b_i$s in $\beta'$. 
			
			Suppose, now that $d > 1$, and let $a_k$ be the vertex in $C_q$ that is 
			the neighbour of $a_j$ on some shortest path from $a_i$ to $a_j$ in the underlying graph of $C_q$.
			Since the distance between $a_i$ and $a_k$ is $d-1$, by the induction hypothesis,
			$\inter_{\beta'}(a_i,a_k) \geq  2^{t-(d-1)}$. Furthermore, since the distance between
			$a_j$ and $a_k$ is one, by the above base case, there is a $b_j$ between
			any two $b_k$s in $\beta'$. Hence, by Lemma~\ref{lem:interlacing},
			$\inter_{\beta'}(a_i,a_j) \geq \lfloor \inter_{\beta'}(a_i,a_k)/2 \rfloor \geq  2^{t-(d-1)}/2 = 2^{t-d}$.
		\end{pocd}
		
		\begin{claim}\label{claim:2}
			For each $q \in [r]$, the restriction of $\varphi$ on the vertices of $C_q$ either preserves all arcs, or reverses all of them. More formally, either
			\begin{enumerate}
				\item for any two distinct $a_i,a_j \in V(C_q)$ it holds that $(b_i, b_j) \in A(D')$ if and only if $(a_i,a_j) \in A(D)$; or
				
				\item for any two distinct $a_i,a_j \in V(C_q)$ it holds that $(b_i, b_j) \in A(D')$ if and only if $(a_j,a_i) \in A(D)$;
			\end{enumerate}
			
		\end{claim}
		\begin{pocd}{\ref{claim:2}}
			First, combining Claim~\ref{claim:interlace} and Observation~\ref{obs:dependency} yields that any two distinct $a_i, a_j \in V(C_q)$ are independent in $D$ if and only if
			their images $b_i$ and $b_j$ are independent in $D'$. Furthermore, since, by \cref{claim:interlace}, the images of any two distinct letters in $V(C_q)$ interlace in $w(\ell',c')$, we conclude that $\varphi$ preserves arcs between independent pairs of letters. 
			
			Now, to prove that either all arcs in $C_q$ are preserved, or all arcs in $C_q$ are reversed, it is enough to show that this holds for the arcs incident to any fixed letter $a_i \in V(C_q)$. 
			Let $a_j \in V(C_q)$ be an arbitrary neighbour of $a_i$ in the underlying graph of $C_q$. 
			Let $v_1$ and $v_2$ be the two vertices of $G \setminus R$ that are represented by the two leftmost
			copies of $a_i$ in $\beta$, and let $u$ be the vertex in $G \setminus R$ that is represented by the 
			letter $a_j$ in $\beta$ that appears between these two copies of $a_i$.
			In order to preserve the adjacencies of $u$ with $v_1$ and $v_2$ in the representation over
			$D'$, the arc between $b_i$ and $b_j$ should be reversed if and only if the order of the letters in $w(\ell',c')$ representing $v_1$ and $v_2$ is swapped with respect to the 
			order of the letters in $w(\ell,c)$ representing the same vertices.
			Since $a_j$ was chosen arbitrarily, we conclude that either all arcs incident to $a_i$ are preserved or all of them are reversed. 
			This concludes the proof of Claim~\ref{claim:2}.
		\end{pocd}
		
		Since $a_i$ and $b_i$ represent the same blue vertices in $G \setminus R$,
		and there is more than one blue vertex represented by $a_i$, we conclude that
		$(a_i,a_i) \in A(D)$ if and only if $(b_i,b_i) \in A(D')$, for every $i \in [t]$.
		This together with \cref{claim:2} imply the following
		
		\begin{corollary}\label{cor:2}
			Let $q \in [r]$, and let $w_1 w_2 \cdots w_n$ be 
			a word realisation of a graph $H$ over $C_q$. 
			Then 
			$\varphi(w_1) \varphi(w_2)  \cdots \varphi(w_n)$ or $\varphi(w_n) \varphi(w_{n-1})  \cdots \varphi(w_1)$  is a word realisation of $H$ over $D'[\Gamma_q]$.
		\end{corollary}
		
		Let $B_i \subseteq B$ be the set of blue vertices whose letters (in representation $(\ell',c')$) are in $\Gamma_i$.
		Any vertex which is neither blue nor red will be called {\it black}.
		
		\begin{claim}\label{claim:distinguish}
			Let $i \in [r]$, $x \in V(G') \setminus B_i$, and $y,z \in B_i$. If $\ell'(y)=\ell'(z)$,
			then $x$ does not distinguish $y$ and $z$, \ie, $x$ does not distinguish
			any two vertices of $G'$ from $B_i$ that are assigned the same letter.
		\end{claim}
		\begin{pocd}{\ref{claim:distinguish}}
			Let $b \in \Gamma_i$ be the letter of $y$ and $z$ assigned by $\ell'$.
			Due to injectivity of $\varphi$, we know that $y$ and $z$ are assigned by $\ell$ the same letter, say, $a \in V(C_i)$.
			
			Now, if $x \in V(G') \setminus B$, \ie, $x$ is a black vertex, then it cannot distinguish any two blue vertices assigned the same letter as the blue and the red vertices of $G$ form a factor in $w(c,\ell)$.
			If $x \in B_j$ for some $j \in [r] \setminus \{i\}$, then $x$ does not distinguish $y,z$ as
			in $w(\ell, c)$ the letter of $x$ and the letter of $y,z$ are in different connected components,
			namely in $C_i$ and in $C_j$, respectively, of the asymmetry graph of $D[S]$.
		\end{pocd}
		
		We will say that two vertices $x,y \in V(G')$ {\em commute} if $\ell'(x)$ and $\ell'(y)$ are independent in $D'$. 
		Observe that if $x$ and $y$ commute and their letters stand next to each other in a word representation of $G'$ over $D'$, then by swapping the letters we obtain a new word representation of $G'$ over $D'$. We will call such a swapping of independent neighbouring letters a \emph{transposition}.
		
		\begin{claim}\label{claim:3}     
			Let $i \in [r]$ and $x \in V(G') \setminus B_i$. Then $x$ is
			\begin{itemize}
				\item[(1)]  \emph{left-commuting with $B_i$}, \ie, $c'(x) < c'(y)$ for every $y \in B_i$ with which $x$ does not commute; or
				\item[(2)] \emph{right-commuting with $B_i$}, \ie, $c'(y) < c'(x)$ for every $y \in B_i$ with which $x$ does not commute.
			\end{itemize}
		\end{claim}
		\begin{pocd}{\ref{claim:3}}
			Assume to the contrary that there exist $y,z \in B_i$ such that $c'(y) < c'(x) < c'(z)$ and 
			$x$ commutes with none of them.
			We claim that $\ell'(y) \ne \ell'(z)$. Indeed, if $\ell'(y) = \ell'(z)$, then the non-commutativity would imply that $x$ distinguishes $y$ and $z$, which is not possible by \cref{claim:distinguish}.
			
			For the same reason, all occurrences of letter $\ell'(y)$ in $w(\ell',c')$ appear before $c'(x)$ and all occurrences of letter $\ell'(z)$ in $w(\ell',c')$ appear after $c'(x)$. However, this contradicts the fact that $\ell'(y)$ and $\ell'(z)$ interlace in $w(\ell', c')$ due to \cref{claim:interlace}.  
		\end{pocd}

		It follows from \cref{claim:3} that if a vertex is left-commuting (respectively, right-commuting) with $B_i$, then, in any word realisation of $G'$, its letter can be moved to the left (respectively, to the right) by a transposition with any letter representing a vertex in $B_i$.
		Note that a vertex can be both left- and right-commuting.

		\begin{claim}\label{claim:non-right}
			Let $i \in [r]$ and $x,y \in V(G') \setminus B_i$.
			If $c'(x) < c'(y)$ and $y$ is \emph{not} right-commuting with $B_i$, then $x$ and $y$ are left-commuting with $B_i$.
			Similarly, if $x$ is \emph{not} left-commuting with $B_i$, then $x$ and $y$ are right-commuting with $B_i$.
		\end{claim}
		\begin{pocd}{\ref{claim:non-right}} 
			The two statements are analogous, so we only show the first one. 
			If $y$ is not right-commuting with $B_i$, then, by \cref{claim:3}, it is left-commuting with $B_i$.
			To show that $x$ is also left-commuting with $B_i$, assume the contrary:
			there exists a vertex $u \in B_i$ such that $c'(u) < c'(x)$ and $x$ does not commute with $u$. 
			In particular, this implies that $c'(w) < c'(x)$ for all $w \in B_i$ with $\ell'(w) = \ell'(u)$, since $x$ does not distinguish $u$ and $w$ by \cref{claim:distinguish}.
			Similarly, since $y$ is not right-commuting with $B_i$ by assumption, there exists a vertex $v \in B_i$ with $c'(v) > c'(y)$ and with which $y$ does not commute, 
			and likewise, $c'(w) > c'(y)$ for all $w \in B_i$ with $\ell'(w) = \ell'(v)$. But this is a contradiction, because if $\ell'(u) \ne \ell'(v)$, then they must interlace by Claim~\ref{claim:interlace}, and if $\ell'(u) = \ell'(v)$, then $x$ distinguishes two vertices $u, v \in B_i$ that are assigned the same letter, which is not possible by \cref{claim:distinguish}. 
		\end{pocd}
		
		\medskip
		
		We now describe how to use transpositions to modify $(\ell', c')$ (or, more specifically, $c'$) to obtain a more well-behaved letter realisation of $G'$. 
		
		We will perform $r$ steps each consisting of a sequence of transpositions such that, after step $i$, $c'(B_1), \dots, c'(B_i)$ are \emph{intervals} in $c'$, i.e., for every $j \in [i]$, the set $c'(B_j) : = \{ c'(x) : x \in B_j \}$ is a set of consecutive integers.
		Assume that we have performed the first $i - 1$ steps so that $c'(B_1), \dots, c'(B_{i - 1})$ are intervals. Step $i$ is then done in two phases. First, we select a {\em middle}: an index $m_i$ such that everything smaller is left-commuting with $B_i$, and everything larger is right-commuting with $B_i$. A little bit of care is required, since we want to guarantee that this middle does not fall inside any previously obtained intervals $c'(B_j)$ with $j < i$. In the second phase, we simply use commutativity in order to move everything smaller than the middle to the left of $B_i$, and everything larger to the right. In detail, Step $i$ is as follows:
		
		\begin{itemize}
			\item {\em Selecting the middle:} If every vertex of $V(G') \setminus B_i$ is left-commuting with $B_i$, we define the middle $m_i$ to be $|c'(V(G'))| + 1$. Otherwise, we let $x_i$ be the leftmost vertex under $c'$ in $V(G') \setminus B_i$ which is not left-commuting with $B_i$. If $x_i \notin B_j$ for any $j \in [i - 1]$, then we set $m_i := c'(x_i)$. Otherwise, $x_i \in B_j$ for a unique $j \in [i - 1]$, and we set $m_i := \min(c'(B_j))$. We then define $L_i := \{x \in V(G') \setminus B_i : c'(x) < m_i\}$, and $M_i := \{x \in V(G') \setminus B_i : c'(x) \geq m_i\}$. 
			
			By construction, every $x \in L_i$ is left-commuting with $B_i$, since it is to the left of $x_i$. We claim that, additionally, every $x \in M_i$ is right-commuting with $B_i$. This is obviously true if $m_i = |c'(V(G))| + 1$ (since then $M_i = \emptyset$), and it is true by Claim~\ref{claim:non-right} if $m_i = c'(x_i)$. It remains to show that this is the case when $x_i \in B_j$ for some $j \in [i - 1]$. It suffices to show that $x$ is right-commuting with $B_i$ for all $x$ with $m_i \leq c'(x) < c'(x_i)$, since again by Claim~\ref{claim:non-right}, every $x$ with $c'(x) \geq c'(x_i)$ is right-commuting with $B_i$. 
			
			To show this, we first note that whether or not a vertex in $B_j$ left-commutes (respectively, right-commutes) with $B_i$ only depends on its letter. Indeed, since $c'(B_j)$ is an interval,
			there are no elements of $B_i$ between any two $x, y \in B_j$, and hence if $\ell'(x) = \ell'(y)$, then $x$ left-commutes (respectively, right-commutes) with $B_i$ if and only if $y$ does.
			In particular, since $x_i$ is not left-commuting with $B_i$, and any $x \in V(G') \setminus B_i$ with $c'(x) < c'(x_i)$ is left-commuting with $B_i$, this implies that $\ell'(x) \neq \ell'(x_i)$.
			Thus, by Claim~\ref{claim:interlace}, for any $x \in B_j$ with $c'(x) < c'(x_i)$, the letters $\ell'(x)$ and $\ell'(x_i)$ interlace. Hence, there is a $y \in B_j$ with $\ell'(y) = \ell'(x)$ and $c'(y) > c'(x_i)$. Consequently, Claim~\ref{claim:non-right} implies that $y$ right-commutes with $B_i$, and so $x$ does as well, by the argument above. This establishes that each $x \in L_i$ left-commutes with $B_i$, and each $x \in M_i$ right-commutes with $B_i$, as desired

			\item {\em Separating the word:} With $L_i$ and $M_i$ as defined above, using transpositions, we modify $c'$ by moving to the left, one by one and starting with the leftmost one, each vertex in $L_i$ until it has no more vertices in $B_i$ to its left. Proceed similarly by moving to the right the vertices in $M_i$, starting with the rightmost one. We note that this procedure does not modify the graph represented by $(\ell', c')$, since it consists of a succession of transpositions. Furthermore, notice that in Step $i$, the relative order under $c'$ of the vertices in $B_i$ does not change, and similarly the relative order   of the vertices not in $B_i$ also does not change. By the choice of $m_i$, each $c'(B_j)$ with $j \in [i - 1]$ is still an interval, and moreover, we observe that $c'(B_i)$ is now also an interval. 
		\end{itemize} 
		
		\noindent
		After Step $r$, our procedure yields that:
		
		\begin{itemize}
			\item [(*)] For each $i \in [r]$, the set $c'(B_i)$ is an interval.
		\end{itemize}  
		
		\medskip 
		
		\noindent
		Now, we are finally ready to construct the desired letter realisation $(\ell'', c'')$ of $G$ starting with the modified letter realisation $(\ell', c')$ of $G'$ that satisfies (*).
		Let $R_i \subseteq R \subseteq V(G)$ be the set of red vertices whose letters are in $C_i$, and
		let $w_1 w_2 \ldots w_{s_i}$ be the subword of $w(\ell, c)$ corresponding to 
		the vertices in $B_i\cup R_i$. In particular, $w_1 w_2 \ldots w_{s_i}$ is a word realisation of $G[B_i\cup R_i]$ over $D[C_i]$.
		To obtain $(\ell'', c'')$, we simply replace the factor corresponding to $B_i$ in $w(\ell',c')$ by the word realisation 
		of $G[B_i \cup R_i]$ over $D'[\Gamma_i]$ as in Corollary~\ref{cor:2}, \ie, either by $\varphi(w_1) \varphi(w_2) \ldots \varphi(w_{s_i})$ or its reverse. We do this for each $i$. Let us denote by $G''$ the graph realised by $(\ell'', c'')$ over $D'$. 
		We claim that $G=G''$. Clearly, both graphs have the same vertex set. Thus, we only need to check that adjacencies between the vertices are the same in both graphs.
		First, we note that since $c'(B_i)$, $i \in [r]$, are intervals in $c'$, and $c''(B_i \cup R_i)$, $i \in [r]$, are intervals in $c''$, and both $\ell'$ and $\ell''$ assign the same letters to the same blue vertices, the addition of the vertex sets $R_i$, $i \in [r]$, does not change the adjacencies in $G' = G'' \setminus R = G \setminus R$.
		It remains to examine the adjacencies between pairs of vertices in which at least one vertex is in $R$.

		\begin{itemize}
			\item Let $x \in R_i$ and $y \in B_i \cup R_i$ for some $i \in [r]$. Then the adjacency between $x$ and $y$ is the same in both graphs by construction. 
			
			\item Let $x \in R_i$ and $y \in B_j \cup R_j$ for distinct $i,j \in [r]$. Denote by  $b \in \Gamma_i$ the letter representing $x$ and by $b' \in \Gamma_j$
			the letter representing $y$ in $G''$. The letters  $\varphi^{-1}(b)$ and $\varphi^{-1}(b')$ are independent in $D$, and hence the adjacency in $G$ of any two vertices (in particular, of $x$ and $y$) labelled by these letters does not depend on their relative positions in  $w(\ell,c)$. 
			We observe that in letter realisation $(\ell'', c'')$,  $B_i$ necessarily contains a vertex $x'$ labelled by $b$ and $B_j$ necessarily contains a vertex $y'$ labelled by $b'$ (where $y'$ could coincide with $y$). Furthermore, with respect to $c''$, either all vertices in $R_i \cup B_i$ are to the left of all vertices in $B_j \cup R_j$, or all vertices in $R_i \cup B_i$ are to the right of all vertices in $B_j \cup R_j$.
			This implies that, in $G''$, vertices $x$ and $y$ have the same adjacency as $x'$ and $y'$. In turn, since $x'$ and $y'$ are vertices of $G'$, their adjacency in $G''$ is the same as in $G$, and coincides with the adjacency of $x$ and $y$ in $G$.
			Consequently, $x$ and $y$ have the same adjacency in both $G''$ and $G$.
			
			\item Let $x \in R_i$ for some $i \in [r]$, and let $y$ be a black vertex, \ie, $y \in V(G'') \setminus (B \cup R)$. Note that in each $c$ and $c''$, vertex $y$ is either to the left or to the right of all vertices in $B_i \cup R_i$. Thus in both graphs $G$ and $G''$ vertex $y$ does not distinguish $x$ from any other vertex in $B_i \cup R_i$ that is labelled by the same letter as $x$.			
			
			Since $B_i$ necessarily contains a vertex $x'$ labelled by $\ell''(x)$, and $y$ and $x'$ are  vertices of $G'$, the adjacency between $y$ and $x'$ in $G''$ is the same as in $G$. This implies that the adjacency between $y$ and $x$ is the same in both graphs too.
		\end{itemize}
		We have now completed the proof by showing that $(\ell'',c'')$ is a letter realisation of $G$. 
	\end{proof}

	\subsection{An upper bound on the size of obstructions}
	\label{sec:upper-bound}
	
	Let $G$ be a critical $k$-letter graph with a $k$-letter realisation $(\ell, c)$.
	We define $g(t,\ell, c)$ as the maximum length of a factor of $w(\ell,c)$
	in which at most $t$ distinct letters appear. Our first goal is to show that the function $g$ is bounded above by a function which only depends on $t$ and $k$. 
	In other words, we will show that the value
	$$
	f_k(t) := \max\{g(t,\ell, c): (\ell, c)\text{ is a $k$-letter realisation} \text{ of a critical $k$-letter graph}\}
	$$ 
	is finite for each $t$ and $k$. Assuming this, $f_k(t)$ is then a function such that, for any critical $k$-letter graph $G$, and any $k$-letter representation $(\ell, c)$ of $G$ over
	any decoder $D$, any factor of length greater than $f_k(t)$ must contain at least $t + 1$ different letters. In particular, $f_k(k)$ is an upper bound on
	the number of vertices in any critical $k$-letter graph. 
	
	\begin{lemma}\label{prop:no-large-factor}
		For every natural $k$, the following holds:
		\begin{enumerate}
			\item[(1)] $f_k(1)\leq 3$;
			\item[(2)] $f_k(t) < (2^{t-1}+1) \cdot (k-1)^t \cdot t  \cdot (f_k(t-1) + 1)$, for every $2\leq t \leq k$;
			\item[(3)] $f_k(k) = 2^{O(k^2\log k)}$.
		\end{enumerate}
	\end{lemma}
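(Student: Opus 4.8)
The plan is to establish the three items in turn; items (1) and (3) are short, and the substance is in item (2), which combines a greedy/pigeonhole word argument with \cref{lem-decoders-new}.

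For item (1), note that a factor of $w(\ell,c)$ using a single letter $a$ consists of vertices all labelled $a$ that occupy consecutive positions of the order $c$; hence no vertex of $G$ lies strictly between any two of them under $c$, and it follows from the definition of a letter realisation that these vertices are pairwise twins in $G$. If such a factor had length at least $4$, then $G$ would contain a twin class of size at least $4$, and \cref{lem:large-twin-classes} would give a vertex $x$ with $\lettericity(G\setminus\{x\}) = \lettericity(G)$, contradicting the fact that $G$ is a critical $k$-letter graph. Thus $f_k(1) \le 3$.

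For item (2), suppose towards a contradiction that some critical $k$-letter graph $G$ has a $k$-letter realisation $(\ell,c)$ over a decoder $D$ such that $w := w(\ell,c)$ has a factor $w' = w_jw_{j+1}\cdots w_{j+p}$ of length $L \ge (2^{t-1}+1)(k-1)^t\,t\,(f_k(t-1)+1)$; let $I=\{j,\ldots,j+p\}$ be its position set. If $w'$ used at most $t-1$ distinct letters we would have $L \le f_k(t-1)$, which is smaller than the claimed bound, so $w'$ uses exactly $t$ distinct letters $a_{s_1},\ldots,a_{s_t}$. Match the pattern $a_{s_1}a_{s_2}\cdots a_{s_t}$ greedily from left to right through $w'$: while scanning for the next required letter, the portion scanned before reaching it contains no occurrence of that letter, hence uses at most $t-1$ letters and so has length at most $f_k(t-1)$; consequently each letter of a period-copy is found within $f_k(t-1)+1$ positions (using also that every sub-factor of $w'$ of length $f_k(t-1)+1$ uses all $t$ letters), and each full period-copy is completed within $t(f_k(t-1)+1)$ positions. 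By the choice of $L$ we therefore obtain a subword of $w'$ equal to the concatenation $\beta_0$ of $N := (2^{t-1}+1)(k-1)^t$ copies of $a_{s_1}\cdots a_{s_t}$. Since $|\beta_0| = Nt < L$, the set $R$ of vertices occupying positions in $I\setminus\Ind_w(\beta_0)$ is nonempty, so by criticality $\lettericity(G\setminus R) \le k-1$.

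Now fix any realisation $(\ell',c')$ of $G' := G\setminus R$ over a decoder $D'$ with at most $k-1$ letters, and write each of the $N$ period-copies as $P_i = v_{i,1}v_{i,2}\cdots v_{i,t}$ with $\ell(v_{i,j}) = a_{s_j}$. Each $P_i$ has a \emph{signature} $\rho_i := (\ell'(v_{i,1}),\ldots,\ell'(v_{i,t}))$ taking one of at most $(k-1)^t$ values, so by the pigeonhole principle some $2^{t-1}+1$ of the copies share a signature; their concatenation $\beta$ is again a periodic subword of $w'$ with period $a_{s_1}\cdots a_{s_t}$, now repeated $2^{t-1}+1$ times, and any two of its vertices with a common $\ell$-letter (necessarily $v_{i,j}$ and $v_{i',j}$ for the same coordinate $j$) receive a common $\ell'$-letter. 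Let $R' \supseteq R$ consist of the vertices occupying positions of $I\setminus\Ind_w(\beta)$; then $G\setminus R'$ is an induced subgraph of $G'$, hence has a realisation over $D'$ in which the blue vertices keep their $\ell'$-letters. Applying \cref{lem-decoders-new} with the factor $w'$, the subword $\beta$, the decoder $D$, and the decoder $D'$ yields a realisation of $G$ over $D'$, so $\lettericity(G) \le k-1$, contradicting $\lettericity(G)=k$; this proves item (2). Finally, item (3) follows by unrolling: items (1) and (2) give $f_k(k)+1 \le (f_k(1)+1)\prod_{t=2}^{k}(2^{t-1}+1)(k-1)^t\,t \le 4\cdot 2^{\sum_{t=2}^{k}t}\,(k-1)^{\sum_{t=2}^{k}t}\,k!$, and since $\sum_{t=2}^{k}t = O(k^2)$ and $k! = 2^{O(k\log k)}$, this is $2^{O(k^2)}\cdot 2^{O(k^2\log k)}\cdot 2^{O(k\log k)} = 2^{O(k^2\log k)}$. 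The main obstacle is item (2), and within it the bookkeeping: one must check that the greedy matching really yields all $N$ period-copies before $w'$ is exhausted — which rests on the fact that every sub-factor of $w'$ of length $f_k(t-1)+1$ must use each of the $t$ letters of $w'$ — and that all hypotheses of \cref{lem-decoders-new} survive the re-selection of $\beta$ and the passage to the nested red set $R'\supseteq R$ (in particular, that $G\setminus R'$, being an induced subgraph of $G'$, still realises over $D'$ with the blue letters fixed).
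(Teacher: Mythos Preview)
Your proof is correct and follows essentially the same approach as the paper. The only cosmetic difference is in item~(2): where the paper partitions the long factor into $\gamma$ consecutive blocks of length $t(f_k(t-1)+1)$ and extracts one copy of $a_{s_1}\cdots a_{s_t}$ from each, you extract the $N=\gamma$ period-copies by greedy left-to-right matching; both arguments rest on the same observation that every sub-factor of length $f_k(t-1)+1$ must contain all $t$ letters, and both then apply the same pigeonhole step on signatures followed by \cref{lem-decoders-new}.
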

	\begin{proof}
		We start by proving (1). Observe that a factor of a word realisation of a graph in which every letter is the same corresponds to a twin class in the graph.
		This together with \cref{lem:large-twin-classes} imply that no word realisation of a $k$-critical graph contains a factor of lengths 4 consisting of the same letter.
		
		We now proceed with the proof of (2). Assume it does not hold and 
		let $(t,k)$ be a lexicographically minimal pair with $2 \leq t \leq k$ for which the proposition does not hold, \ie, there exists a critical $k$-letter graph $G$
		and a $k$-letter realisation $(\ell, c)$ of $G$ over a $k$-letter decoder $D$ such that $w := w(\ell,c)$ has a factor $u$ of length $(2^{t-1}+1) \cdot (k-1)^t \cdot t \cdot (f_k(t-1) + 1)$ that contains exactly $t$ different letters, say, $a_1, \ldots, a_{t}$.
		
		Let $\gamma := (2^{t-1}+1) \cdot (k-1)^t$ and let $u^1, u^2, \ldots, u^\gamma$
		be consecutive factors of $u$ each of length $t \cdot (f_k(t-1)+1)$.
		We note that, by the minimality of the counterexample, each of these factors contains a subword $a_1\cdots a_{t}$, since each of its subfactors of length $f_k(t-1) + 1$ must contain every one of the $t$ letters. 
		In each factor $u^j$, pick one such subword $a_1\cdots a_{t}$ and denote it by $s^j$.
		
		Consider the graph $G^+ := G \setminus \left\{x: c(x) \in I \setminus \left(\bigcup_j \Ind_w(s^j)\right)\right\}$, where $I = \Ind_w(u)$.
		By assumption, $G^+$ has lettericity at most $k - 1$, and therefore, it has a letter
		realisation $(\ell^{+}, c^{+})$ over a $(k-1)$-letter decoder $D'$. In particular, a simple application of the pigeonhole principle allows us to conclude that there are $2^{t-1}+1$ of the
		words $s^j$ in which $a_i$ is mapped in $(\ell^{+}, c^{+})$ to the same letter $b_i$ of $D'$ for all $i \in [t]$. Let $\beta$ be the subword of $w$ consisting of all these $2^{t-1}+1$ words $s^j$.
		
		Following the notation from the statement of Lemma~\ref{lem-decoders-new}, write 
		$B = \{x \in V(G): c(x) \in \Ind_w(\beta)\}$,
		$R = \{x \in V(G): c(x) \in I \setminus \Ind_w(\beta)\}$, and
		$G' := G \setminus R$. Letting $(\ell', c')$ be the restriction of $(\ell^{+}, c^{+})$ to $G'$, we see that the conditions of Lemma~\ref{lem-decoders-new}, are satisfied. Hence, by \cref{lem-decoders-new}, there exists a letter realisation of $G$ over the $(k-1)$-letter decoder $D'$, which is in contradiction with the fact that $G$ has lettericity $k$. This concludes the proof of (2).
		
		Finally, we are ready to prove (3). For every $2 \leq t \leq k$, we derive from (2) the following inequality
		$$
		f_k(t) < (2^{t-1}+1) \cdot (k-1)^t \cdot t \cdot (f_k(t-1) + 1) \leq (2k)^{k+1} \cdot f_k(t-1).
		$$ 
		Now, from this inequality and (1), we deduce the desired 
		$$
		f_k(k) \leq 3 \cdot ((2k)^{k+1})^{k - 1} \leq  (2k)^{k^2} = 2^{O(k^2\log k)}.
		$$
	\end{proof}

	We finish by deriving the intended upper bound on the size of obstructions for $k$-lettericity.
	
	\begin{theorem} \label{th:main}
		Let $k$ be a positive integer. 
		Any obstruction for $k$-lettericity has at most $2^{O(k^2\log k)}$ vertices.
	\end{theorem}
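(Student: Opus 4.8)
The plan is to recognise that an obstruction for $k$-lettericity is itself a critical letter graph of controlled lettericity, and then to feed it into \cref{prop:no-large-factor}, which has already done all the hard work.

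First I would check that any obstruction $H$ for the class of $k$-letter graphs is a \emph{critical $m$-letter graph} for some $m$ satisfying $k+1 \le m \le 2k+1$. That $\lettericity(H) \ge k+1$ is immediate from $H \notin \mathcal{L}_k$. For the upper bound, fix any vertex $v$ of $H$; since $H\setminus\{v\} \in \mathcal{L}_k$ by the definition of an obstruction, part~(2) of \cref{lem:chain-graph} gives $\lettericity(H) \le 2\lettericity(H\setminus\{v\})+1 \le 2k+1$. Criticality follows because every proper induced subgraph $H'$ of $H$ is an induced subgraph of $H\setminus\{v\}$ for some $v$, whence $\lettericity(H') \le \lettericity(H\setminus\{v\}) \le k < \lettericity(H)$.

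Next, setting $m := \lettericity(H)$, I would fix an $m$-letter realisation $(\ell,c)$ of $H$ over some decoder $D=([m],A)$, and observe that the word $w(\ell,c)$ has length $|V(H)|$ and uses at most $m$ distinct letters. Regarding $w(\ell,c)$ as a factor of itself, it is a factor in which at most $m$ distinct letters appear, so $|V(H)| \le g(m,\ell,c) \le f_m(m)$, where $f_m$ is the function of \cref{prop:no-large-factor} applied with parameter $m$ (a legitimate substitution since $H$ is a critical $m$-letter graph). Part~(3) of \cref{prop:no-large-factor} then gives $f_m(m) = 2^{O(m^2\log m)}$, and since $m \le 2k+1 = O(k)$ we conclude $|V(H)| = 2^{O(k^2\log k)}$.

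In short, almost nothing remains to be proved here: the genuine difficulty --- establishing the finiteness and single-exponential growth of $f_k(t)$ --- was already handled by \cref{prop:no-large-factor} (itself built on the technical \cref{lem-decoders-new}), so the only work in this proof is the routine observation that obstructions are critical letter graphs and the substitution of $m = O(k)$ into the bound.
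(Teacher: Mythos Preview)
Your proposal is correct and follows essentially the same route as the paper: reduce to showing that an obstruction is a critical $m$-letter graph with $m\le 2k+1$ via \cref{lem:chain-graph}, then invoke \cref{prop:no-large-factor}(3). You supply slightly more detail (explicitly verifying criticality and the lower bound $m\ge k+1$), but the argument is the same.
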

	
	\begin{proof} Let $G$ be an obstruction for $k$-lettericity.
		From Lemma~\ref{lem:chain-graph}, it follows that $\lettericity(G) \leq 2k+1$.
		Hence, $G$ is a critical $s$-letter graph for some $s \leq 2k+1$.
		Thus, the number of vertices in $G$ is 
		upper bounded by $f_{s}(s)$, which, by \cref{prop:no-large-factor} (3), is bounded
		by $2^{O(s^2\log s)} = 2^{O(k^2\log k)}$.
	\end{proof}

	\section{Conclusion}
    \label{sec:con}
	We conclude our paper with a number of open problems. 

    \medskip
    \noindent
    {\it Algorithmic problems}. In spite of the positive results presented in the paper, the complexity of computing the lettericity, or equivalently, the complexity of recognizing $k$-letter graphs when $k$ is not fixed, remains unknown. We conjecture it to be NP-complete. 

	\medskip
    \noindent
    {\it Bounds on obstructions}.
	We note that there is still a big discrepancy between our bound and the obstruction sizes we have observed so far, which are all linear. That is to say, all of our ``standard'' examples of structures with growing lettericity have size within a small constant factor of the lettericity. This is really curious, since even our exponential bound required significant effort to derive. In \cite{AlecuL2021}, the structure of obstacles to bounded lettericity is investigated, and certain conjectures are made. In view of those conjectures, it seems believable (albeit perhaps slightly ambitious) to conjecture that this is, in fact, the true answer: 
	
	\begin{conjecture}
		There is a $C > 0$ such that any obstruction for the class of $k$-letter graphs has at most $C \cdot k$ vertices.
	\end{conjecture}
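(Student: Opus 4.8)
Since this statement is a conjecture, the following is a plan of attack rather than a proof. The first step would be to reduce the problem to bounding the number of vertices in \emph{critical} letter graphs. If $G$ is an obstruction for $k$-lettericity, then $\lettericity(G)\le 2k+1$ by \cref{lem:chain-graph}, and every proper induced subgraph of $G$ has lettericity at most $k$, so $G$ is a critical $s$-letter graph for some $s$ with $k<s\le 2k+1$. Hence it suffices to prove that every critical $s$-letter graph has $O(s)$ vertices, after which the bound $C\cdot k$ for obstructions follows with $C$ larger by a factor of at most $3$. Equivalently, the target becomes the following: in a critical $k$-letter graph $G$ there is a realisation $(\ell,c)$ over some decoder $D=([k],A)$ in which every letter is used only a bounded number of times, so that $|V(G)|=|w(\ell,c)|=O(k)$. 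This is consistent with all obstruction sizes observed so far (\eg\ among paths) being linear in $k$.

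Next I would argue that a genuinely new idea is required, because the recursion of \cref{prop:no-large-factor} cannot by itself produce a linear bound: it induces on $t$ through $k$ levels, and even if each level merely multiplied $f_k(t-1)$ by an absolute constant $c$, the outcome $f_k(k)\le 3c^{k-1}$ would still be exponential in $k$. A linear bound must therefore come from a single \emph{global} argument on the whole word $w=w(\ell,c)$. The template would remain that of \cref{lem-decoders-new}: take a realisation of the critical graph $G$ over a twin-free $k$-letter decoder $D$ with $|w|$ large; use criticality to obtain a $(k-1)$-letter realisation of a suitably chosen induced subgraph $G^+$ of $G$; and splice the two to produce a $(k-1)$-letter realisation of the whole of $G$, contradicting $\lettericity(G)=k$. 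The crucial change is that the ``reducible configuration'' triggering the splice must have size $O(1)$ \emph{per letter}: one wants a blue stretch $\beta$ in which each of its $t$ letters occurs only a bounded number of times, rather than the $2^{t-1}+1$ occurrences demanded in the hypothesis of \cref{lem-decoders-new}, and correspondingly the $(k-1)^t$ pigeonhole in \cref{prop:no-large-factor} must be avoided.

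The technical core would then be to rework the interlacing analysis, \cref{claim:interlace} through \cref{claim:non-right}. At present the exponential is born in \cref{claim:interlace}, where pairwise interlacing of the image letters inside a component $C_q$ of the asymmetry graph of $D[S]$ is obtained by chaining \cref{lem:interlacing} along a path of length up to $t$, halving the interlacing count at every step. The plan would be to replace this by an argument needing interlacing only along a spanning structure of $C_q$ --- or, better still, to isolate a stronger splicing lemma in which pairwise interlacing of the images plays no role and one only needs the map $\varphi$ to act coherently (uniformly preserving or uniformly reversing arcs) on each component, a property that should be certifiable by $O(1)$ well-placed occurrences. In parallel, the $(k-1)^t$ factor, which comes from a pigeonhole over the possible images of the $t$ letters of $\beta$ in the $(k-1)$-letter decoder, would have to be tamed, for instance by choosing the candidate sub-factors so that a consistent one is guaranteed already within a window of length $O(k)$, or by an amortised/charging argument processing all $k$ letters of $D$ at once rather than one sub-alphabet at a time.

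The step I expect to be the main obstacle is dealing with the multiplicative coupling between letters. In any realisation of a critical graph, two consecutive occurrences of a letter $a$ are twins unless a letter dependent with $a$ lies between them, so the occurrences of $a$ split into at most $\prod_{b\ \text{dependent with}\ a}(m_b+1)$ twin classes (here $m_b$ is the number of occurrences of $b$), each of size at most $3$ by \cref{lem:large-twin-classes}; iterated over all letters this product is exponential, and all of the easy local arguments cascade the same way. Converting this into a linear bound requires criticality to deliver a much sharper dichotomy --- essentially, that \emph{if some letter occurs more than a constant number of times, then $w$ already contains a block whose deletion does not raise the lettericity} --- with only additive, never multiplicative, loss. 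Proving such a statement would be a genuine strengthening of \cref{lem:large-twin-classes} and \cref{lem-decoders-new}, and the structural conjectures about the shape of obstructions in \cite{AlecuL2021} look like the most promising source of the missing combinatorial input. Short of a full resolution, a reasonable intermediate goal would be to push the present bound from $2^{O(k^2\log k)}$ down to single-exponential, or polynomial, in $k$, by reorganising the recursion so that all $k$ letters are handled in one pass.
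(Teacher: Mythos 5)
This statement is an open conjecture: the paper does not prove it (and explicitly presents it as a conjecture motivated by the linear-size examples and the structural conjectures of \cite{AlecuL2021}), and your submission, as you acknowledge, is a research plan rather than a proof, so there is nothing here that could be checked for correctness as a proof. What you do get right as framing: the reduction to critical letter graphs is exactly the paper's own reduction in \cref{th:main} (an obstruction has lettericity at most $2k+1$ by \cref{lem:chain-graph}, hence is a critical $s$-letter graph with $k<s\leq 2k+1$, so a linear bound for critical graphs gives a linear bound for obstructions with the constant inflated by at most a factor around $3$); and your diagnosis of where the exponential in \cref{prop:no-large-factor} is born is accurate --- the halving along paths of the asymmetry graph in \cref{claim:interlace} forces the $2^{t-1}+1$ repetitions in the hypothesis of \cref{lem-decoders-new}, and the $(k-1)^t$ factor comes from the pigeonhole over images of the $t$ letters in the smaller decoder, so no reorganisation of that recursion alone can yield a bound better than exponential in $k$.

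The genuine gap is that the step your plan hinges on is the conjecture itself in disguise. Your proposed ``sharper dichotomy'' --- that in a critical letter graph, any letter occurring more than a constant number of times forces a deletable block, with only additive loss --- is precisely the statement one would need, and neither your outline nor the paper's machinery suggests how to certify the coherence of the re-encoding map $\varphi$ (arc-preserving or arc-reversing on each component of the asymmetry graph, as in \cref{claim:2}) from only $O(1)$ occurrences per letter: the current argument genuinely uses the long periodic subword both to force interlacing in the new realisation (\cref{claim:interlace}) and to make the interval-rearrangement via \cref{claim:3} and \cref{claim:non-right} possible, and no substitute mechanism is offered. Likewise, your suggestion to avoid the pigeonhole ``by choosing the candidate sub-factors so that a consistent one is guaranteed already within a window of length $O(k)$'' is an aspiration, not an argument. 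So the proposal correctly identifies the obstacles but contributes no step that would close them; as the paper stands, even improving $2^{O(k^2\log k)}$ to single-exponential or polynomial in $k$ remains open.
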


    \medskip
    \noindent
    {\it Graphs of unbounded lettericity}.
    Finally, we observe that lettericity can be of interest even for classes, 
    where this parameter is unbounded. For instance, the class of bipartite permutation
    graphs is precisely the class of letter graphs deciphered by the decoder $D$ which is 
    an infinite directed path: these graphs are also known as Parikh word
    representable graphs \cite{Parikh}. By extending $D$ to its transitive closure, we extend 
    the class of bipartite permutation graphs to the class of all permutation graphs. Both these 
    classes play a critical role in the area of graph parameters. In particular, the class of bipartite
    permutations graphs is a minimal hereditary class of unbounded clique-width, while 
    the permutation graphs form a minimal hereditary class of unbounded twin-width. Identifying 
    other critical classes by means of letter graphs with a well-structured infinite decoder 
    is an interesting research direction. 

    \medskip
    \medskip
    \noindent
    \emph{\bf Acknowledgement.} This work is a follow-up of \cite{AlecuFKLVZ2022} and the authors thank Robert Ferguson and Vince Vatter for preliminary discussions.

    \medskip
    \medskip
    \noindent
    \emph{\bf Conflict of interest statement.}
    The authors declare no conflicts of interest.

	\newpage
	
	\appendix

	\section{An FPT algorithm with a moderate exponential on the parameter}\label{sec:fpt-recognition}

	In this section, we will develop a Dynamic Programming algorithm which decides whether a given graph has lettericity at most $k$ in time
	$2^{O(k^2\cdot 2^{2k})}\cdot n^3$. We first recall the following algorithm that decides whether a graph has linear rank-width at most $k$, and if so, outputs a
	linear ordering of optimal width. 
	
	\begin{theorem}[\cite{JeongKO17}]\label{thm:recognition-lrwd} Let $k$ be a positive integer. There is an algorithm that takes as input an $n$-vertex graph $G$
		and checks in time $2^{O(k^2)}\cdot n^3$ whether $G$ has linear rank-width at most $k$ and if so, outputs a linear ordering of $G$ of width at most $k$.
	\end{theorem}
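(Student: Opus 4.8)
The plan is to recast $\lrwd$ as a path-width-type parameter of the cut-rank connectivity function and then run a dynamic program in the spirit of Bodlaender and Kloks' algorithm for path-width, carried out not over the graph itself but over a rank-decomposition of bounded width; this is the route taken in \cite{JeongKO17}. I would organise it into three phases, described next.

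\emph{Phase 1: a bounded-width scaffold.} A linear layout of $V(G)$ is in particular a (caterpillar-shaped) rank-decomposition of $G$ of no larger width, so for $|V(G)|\ge 2$ the rank-width of $G$ is at most $\lrwd(G)$; hence if the rank-width of $G$ exceeds $k$ we may immediately report $\lrwd(G) > k$. Otherwise, using a known FPT algorithm for rank-width (for instance the $(3k+1)$-approximation of branch-width of Oum--Seymour \cite{OumS2006} applied to the cut-rank function, which is a symmetric submodular function with a polynomial-time oracle), compute in time $2^{O(k^2)}\cdot n^3$ a rank-decomposition $(T,\mathcal{L})$ of $G$ of width $O(k)$; this $n^3$ term is what dominates the final running time. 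Root $T$ at a subdivided edge; every node $t$ of $T$ then carries a vertex set $V_t\subseteq V(G)$ (the leaves of $\mathcal{L}$ below $t$) with $\cutrk_G(V_t)=O(k)$, and the two children of an internal node split $V_t$ into two parts.

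\emph{Phase 2: characteristics.} The dynamic program computes, bottom-up, for each node $t$ a bounded-size set of \emph{characteristics} summarising all essentially different orderings of $V_t$ that might extend to a full layout of width $\le k$. A characteristic records two things. First, a \emph{boundary type}: the way $V_t$ interacts with $V(G)\setminus V_t$. A priori this interaction is described by the partition of each side into up to $2^{O(k)}$ neighbourhood classes together with a binary matrix of cross-adjacencies, which is doubly exponential in $k$; the point is that after a linear-algebraic normalisation (working with a basis of the rank-$O(k)$ boundary space rather than with all classes) it suffices to keep an $O(k)\times O(k)$ matrix over $\mathbb{F}_2$, of which there are $2^{O(k^2)}$. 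Second, a \emph{typical sequence}: the sequence of values taken by $\cutrk_G$ along the prefixes of the given ordering of $V_t$, recorded up to the standard compression of integer sequences over $\{0,\dots,k\}$ to their typical sequences, of which there are $2^{O(k)}$. A characteristic is a pair of these, so each node stores at most $2^{O(k^2)}$ of them, and a \emph{full} set of characteristics of $t$ is the set of all characteristics realised by some ordering of $V_t$.

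\emph{Phase 3: transitions, and the main obstacle.} Leaves are handled trivially. At an internal node $t$ with children $t_1,t_2$, an ordering of $V_t$ is an interleaving of an ordering of $V_{t_1}$ with one of $V_{t_2}$, and the key analytic fact is that the cut-rank of a prefix of the interleaved order is controlled, via submodularity of $\cutrk_G$, by the two constituent prefixes together with the two boundary types; this lets one define a \emph{merge} operation that, from a characteristic of $t_1$ and one of $t_2$, produces the set of characteristics of $t$ they can realise (interleaving the two typical sequences, re-compressing, and discarding any merge whose intermediate cut-rank would exceed $k$). The full set of characteristics of $t$ is the union of these merges over all pairs. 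At the root we answer YES iff some characteristic has all its cut-rank values $\le k$, and an optimal layout is then recovered by backtracking through the table. The main obstacle, and the technical heart of the argument, is proving this dynamic program sound and complete: one must show (a) that the linear-algebraic normalisation really does retain all information needed to compute cut-ranks of prefixes of merged orders from the stored data alone, keeping boundary types at $2^{O(k^2)}$ rather than doubly exponential, and (b) an exchange argument for typical sequences guaranteeing that any genuine full layout of width $\le k$ can be reconstructed from compressed characteristics. Each node's processing then costs $2^{O(k^2)}$, so Phases~2--3 take time $2^{O(k^2)}\cdot n$ and the overall running time is $2^{O(k^2)}\cdot n^3$, dominated by Phase~1.
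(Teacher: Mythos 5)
The paper contains no proof of this statement: Theorem~\ref{thm:recognition-lrwd} is imported verbatim from Jeong, Kim and Oum \cite{JeongKO17} and is used as a black box (in Theorem~\ref{thm:recognition} and in the appendix). So the only meaningful comparison is with \cite{JeongKO17} itself, and there your outline is faithful to the strategy of that work: obtain a bounded-width scaffold decomposition, then run a Bodlaender--Kloks-style bottom-up computation of compressed characteristics, where the r\^ole of typical sequences is played by a linear-algebraic generalisation (the ``trajectories'' over the boundary space) so that the amount of data per node stays single-exponential in $k$, and recover an optimal linear layout by backtracking.

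As a proof, however, the proposal is not complete, and the gap is exactly where you flag it. Items (a) and (b) of your Phase~3 --- that the $O(k)\times O(k)$ boundary normalisation retains all information needed to evaluate cut-ranks of prefixes of merged orders, and the exchange/compression argument showing that every layout of width at most $k$ is represented by some stored characteristic --- are the technical heart of \cite{JeongKO17}; submodularity of $\cutrk_G$ alone does not yield the merge lemma, and the passage from integer-valued typical sequences to vector-space-valued trajectories is precisely what makes the correctness argument nontrivial. Until those two lemmas are proved, you have an accurate plan rather than a proof. A secondary inaccuracy: in Phase~1 you attribute a $2^{O(k^2)}\cdot n^3$ construction of an $O(k)$-width rank-decomposition to the Oum--Seymour approximation \cite{OumS2006}, but that generic submodular branch-width algorithm has a much larger polynomial dependence on $n$; to justify the claimed overall running time one needs the later $f(k)\cdot n^3$ rank-decomposition algorithms (or the self-contained route taken in \cite{JeongKO17}). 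Since the statement is cited, none of this affects the paper, but as a standalone argument the proposal should not be accepted as a proof.
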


	We start with some preliminaries. Let $G$ be a graph. If $B\subseteq V(G)$, then we define the binary relation $\equiv_G^B$ on subsets of $B$, where for any
	$X,Y\subseteq B$, we have
	\begin{align*}
		X\equiv_G^B Y\ \textrm{if for any $z\in V(G)\setminus B$, we have}\  \min\{|N_G(z)\cap X|,1\} = \min\{|N_G(z)\cap Y|,1\}.
	\end{align*}
	
	In other words, $X\equiv_G^B Y$ if any vertex $z\in V(G)\setminus B$ that has a neighbour in $X$, also has a neighbour in $Y$, and vice versa. It is not hard to
	check that $\equiv_G^B$ is an equivalence relation. Whenever $\{x\} \equiv_G^B \{y\}$, we call $x$ and $y$ \emph{twins \wrt $B$}, and call \emph{twin class of
		$B$} a maximal set of vertices that are all pairwise twins \wrt $B$. We denote by $\Twin(B)$ the set of twin classes of $B$. 
	
	For a subset $B$ of $V(G)$ and subsets $X$ and $Y$ of $B$, we call $X$ and $Y$ \emph{equivalent}, denoted by $\equiv_{G,\compl{G}}^A$, if $X\equiv_G^A Y$ and
	$X\equiv_{\compl{G}}^A Y$. It is not hard to check that $\equiv_{G,\compl{G}}^B$ is an equivalence relation. Moreover, the checking of the following can be
	found in \cite{BuiXuanTV2013}.
	
	\begin{lemma}[\cite{BuiXuanTV2013}]\label{lem:equiv} Let $B$ and $B'$ be vertex subsets of $V(G)$ with $B\subseteq B'$, for some graph $G$. For any two subsets $X$ and $Y$ of
		$B$, if $X\equiv_G^B Y$, then $X\equiv_G^{B'} Y$.
	\end{lemma}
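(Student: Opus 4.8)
The plan is to derive the statement directly from the definition of $\equiv_G^B$ as an elementary monotonicity observation; essentially no work is required. First I would note that, since $X,Y\subseteq B\subseteq B'$, the sets $X$ and $Y$ are in particular subsets of $B'$, so the relation $X\equiv_G^{B'}Y$ is meaningful when applied to them. The key (and only) non-tautological ingredient is the set inclusion $V(G)\setminus B'\subseteq V(G)\setminus B$, which holds precisely because $B\subseteq B'$ (taking complements inside $V(G)$ reverses inclusion).

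With this in hand the proof reduces to comparing the two defining conditions. By definition, $X\equiv_G^B Y$ asserts that $\min\{|N_G(z)\cap X|,1\}=\min\{|N_G(z)\cap Y|,1\}$ for every $z\in V(G)\setminus B$, whereas $X\equiv_G^{B'}Y$ asserts the same equality for every $z$ in the \emph{smaller} set $V(G)\setminus B'$. Restricting the universally quantified statement defining $X\equiv_G^B Y$ to the subset $V(G)\setminus B'$ of its range of quantification yields exactly the statement defining $X\equiv_G^{B'}Y$. Phrased in the intuitive language used right after the definition: enlarging $B$ to $B'$ only \emph{removes} vertices $z$ that could serve as a ``witness'' distinguishing $X$ from $Y$, so if no such witness exists outside $B$, then none exists outside $B'$ either. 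That finishes the argument.

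I do not anticipate any real obstacle. The two points worth a moment's care are (i) getting the direction of the inclusion between $V(G)\setminus B$ and $V(G)\setminus B'$ right, and (ii) observing that the identical argument applies verbatim with $\compl{G}$ in place of $G$, so that the same monotonicity holds for $\equiv_{\compl{G}}^{\cdot}$ and hence for the combined relation $\equiv_{G,\compl{G}}^{\cdot}$ — which is the form actually invoked later in the dynamic programming algorithm.
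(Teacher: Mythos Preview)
Your argument is correct: the inclusion $B\subseteq B'$ gives $V(G)\setminus B'\subseteq V(G)\setminus B$, and the defining condition for $\equiv_G^{B'}$ is then a specialisation of that for $\equiv_G^{B}$. The paper does not actually supply its own proof of this lemma but defers to the cited reference \cite{BuiXuanTV2013}; your direct unpacking of the definition is exactly the intended elementary verification.
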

	
	Let $D=(\alphabet,A)$ be a decoder. If $\bar{B}=(B_1,\ldots,B_k)$ is an ordered partition of $B\subseteq V(G)$, we define $\paths(\bar{B})$ as the
	set of tuples $(i,j,T,T')\in [k]^2\times (\Twin({B}))^2$ such that there are vertices $x\in B_i$ and $y\in B_j$ and a directed path from $x$ to $y$ in
	$\CG(G[B],D,\ell_{\bar{B}})$, and $T$ and $T'$ are, respectively, the twin classes \wrt $B$ of $x$ and $y$.
	
	The following lemma tells that we can restrict the set of partial information to keep at each step of the DP algorithm.
	
	\begin{lemma}\label{lem:correct-dp} Let $D=(\alphabet,k)$ be a decoder. Let $B$ a subset of the vertex set of a graph $G$, and let $x\in V(G)\setminus B$. Let $\bar{X}=(X_1,\ldots,X_k)$ and
		$\bar{Y}=(Y_1,\ldots,Y_k)$ be two ordered partitions of $B$ such that:
		\begin{enumerate}
			\item[(i)] $X_i\equiv_{G,\compl{G}}^B Y_i$ for all $1\leq i\leq k$,
			\item[(ii)] $\paths(\bar{X}) = \paths(\bar{Y})$.
			\item[(iii)] $\bar{X}$ and $\bar{Y}$ are both letter partitions of $G[B]$ over $D$.  
		\end{enumerate}
		Then, for each $1\leq i \leq k$, we have
		\begin{enumerate}
			\item $X_j\equiv_{G,\compl{G}}^{B'} Y_j$ and $X_j\cup \{x\} \equiv_{G,\compl{G}}^{B'} Y_j\cup \{x\}$,  for all $1\leq j\leq k$,
			\item $\paths(\bar{X}^i) = \paths(\bar{Y}^i)$.
			\item $\bar{X}^i$ is a letter partition of $G[B]$ over $D$ if and only if $\bar{Y}^i$ is.
		\end{enumerate}
		where $B'=B\cup \{x\}$, $\bar{X}^i=(X_1,\ldots, X_{i-1},X_i\cup \{x\}, X_{i+1},\ldots, X_k)$ and
		$\bar{Y}^i=(Y_1,\ldots, Y_{i-1},Y_i\cup \{x\}, Y_{i+1},\ldots, Y_k)$.
	\end{lemma}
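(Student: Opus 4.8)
The common engine behind all three conclusions is that the only new information $x$ contributes, relative to $B$, is \emph{local} and \emph{independent of the partition}. Since $x\notin B$, the adjacency of $x$ to a vertex $v\in B$ is constant over the twin class of $v$ with respect to $B$; hence (a) whether $x$ has a neighbour, or a non-neighbour, in a block is detected by evaluating $\equiv_{G,\compl{G}}^B$ at $z=x$, and (b) the arc, if any, of $\CG(G[B'],D,\ell_{\bar X^i})$ between $x$ and $v\in B$ is a function only of $i$, of $\ell_{\bar X}(v)$, and of the twin class of $v$ with respect to $B$. Moreover, ``twins with respect to $B$'' implies ``twins with respect to $B'$'', so each class of $\Twin(B')$ restricts on $B$ to a union of classes of $\Twin(B)$; the resulting coarsening $\Twin(B)\to\Twin(B')$ and the class $T_x\in\Twin(B')$ of $x$ depend only on $G,B,B'$. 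None of these ``interface'' data tell $\bar X$ and $\bar Y$ apart, so whenever a conclusion for $\bar X^i$ can be written as a function of a hypothesis for $\bar X$ together with interface data, it transfers to $\bar Y^i$. In particular, item~1 uses only~(i): $X_j\equiv_{G,\compl{G}}^{B'}Y_j$ follows from~(i) by applying \cref{lem:equiv} to $G$ and to $\compl{G}$, and since every $z\in V(G)\setminus B'$ satisfies $z\ne x$, $z$ has a neighbour (resp.\ non-neighbour) in $X_j\cup\{x\}$ iff $z$ is adjacent (resp.\ non-adjacent) to $x$ or has a neighbour (resp.\ non-neighbour) in $X_j$, whence $X_j\cup\{x\}\equiv_{G,\compl{G}}^{B'}Y_j\cup\{x\}$ follows from $X_j\equiv_{G,\compl{G}}^{B'}Y_j$.

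For item~3, recall that $\bar X^i$ is a letter partition of $G[B']$ over $D$ iff $\ell_{\bar X^i}$ satisfies (C1)--(C4) of \cref{thm:characterisation}. The instances of (C1)--(C3) on blocks and pairs avoiding index $i$ are exactly those for $\bar X$ and hold by~(iii); the only new constraints are that $X_i\cup\{x\}$ be a clique or an independent set according to whether $(i,i)\in A$, that $G[X_i\cup\{x\},X_l]$ be complete or edgeless whenever $(i,l)$ and $(l,i)$ are both in or both out of $A$, and (C4). For the former: $X_i\cup\{x\}$ is a clique iff $X_i$ is a clique and $x$ is complete to $X_i$; by~(iii), $X_i$ and $Y_i$ are cliques whenever $(i,i)\in A$, and ``$x$ complete to $X_i$'' means $x$ has no non-neighbour in $X_i$, which by $X_i\equiv_{\compl{G}}^B Y_i$ (from~(i) at $z=x$) is equivalent to ``$x$ complete to $Y_i$''; the independent-set and bipartite cases are identical, using $\equiv_G^B$ instead. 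For (C4): $\CG(G[B'],D,\ell_{\bar X^i})$ arises from $\CG(G[B],D,\ell_{\bar X})$, which is acyclic by~(iii), by adding $x$ together with its arcs to $B$; since no vertex of $B$ is joined to $x$ by arcs in both directions, a (simple) cycle exists iff there are distinct $v_1,v_m\in B$ with an arc $x\to v_1$, an arc $v_m\to x$, and a directed $v_1$--$v_m$ path in $\CG(G[B],D,\ell_{\bar X})$. The two arcs depend only on $i$ and on the labels and twin classes (w.r.t.\ $B$) of $v_1,v_m$, and such a path between prescribed label/twin-class classes is recorded exactly by a quadruple of $\paths(\bar X)$; thus acyclicity of $\CG(G[B'],D,\ell_{\bar X^i})$ is a Boolean function of $\paths(\bar X)$ and interface data, hence by~(ii) behaves the same for $\bar Y^i$. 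Combining, $\bar X^i$ is a letter partition iff $\bar Y^i$ is.

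Item~2 is the most involved. I would show that, for each quadruple $\tau=(a,b,T,T')\in[k]^2\times\Twin(B')^2$, the condition $\tau\in\paths(\bar X^i)$ is a fixed Boolean combination — depending only on $\tau$, $i$, $G$, $B$, $B'$, $D$ — of statements of the form ``$(\cdot,\cdot,\cdot,\cdot)\in\paths(\bar X)$''. To get this, one decomposes a directed path of $\CG(G[B'],D,\ell_{\bar X^i})$ realising $\tau$ into at most three consecutive pieces: a directed path of $\CG(G[B],D,\ell_{\bar X})$ ending at a vertex $w$ with an arc $w\to x$ (optional), the vertex $x$ (present iff the path meets $x$), and a directed path of $\CG(G[B],D,\ell_{\bar X})$ starting at a vertex $w'$ with an arc $x\to w'$ (optional). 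Reading $\paths$ so that a lone vertex is a trivial path (so $(a,a,S,S)\in\paths(\bar B)$ iff $B_a\cap S\ne\emptyset$), the existence of each piece with endpoints in prescribed twin classes of $B'$ is captured by $\paths(\bar X)$ together with the ``$x\to\cdot$''/``$\cdot\to x$'' type of each label/twin-class-of-$B$ class, the coarsening $\Twin(B)\to\Twin(B')$, and $T_x$. By~(ii) the resulting Boolean combination evaluates identically for $\bar X$ and $\bar Y$, giving $\paths(\bar X^i)=\paths(\bar Y^i)$.

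The step I expect to be the main obstacle is exactly this last one: setting up the path decomposition cleanly, tracking the one-way refinement between $\Twin(B)$ and $\Twin(B')$ — in particular which classes of $\Twin(B)$ merge and which are absorbed into $T_x$ — and verifying that the outer pieces glue correctly at $x$ (including degenerate single-arc and trivial-path cases). Items~1 and~3, and the remaining case analysis for item~2, are routine once the interface data above has been isolated.
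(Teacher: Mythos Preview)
Your proposal is correct and follows essentially the same approach as the paper: item~1 via \cref{lem:equiv}, items~2 and~3(C4) by decomposing any path/circuit of $\CG(G[B'],D,\ell_{\bar X^i})$ at $x$ into subpaths recorded by $\paths(\bar X)$ and arcs at $x$ that depend only on label and twin class. Your treatment is in fact slightly more careful than the paper's in making explicit the coarsening $\Twin(B)\to\Twin(B')$ and the degenerate cases where the path starts or ends at $x$; the paper glosses over both points but the underlying argument is the same.
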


	\begin{proof} Using Lemma \ref{lem:equiv}, we can conclude that $X_j\equiv_{G,G'}^{B'} Y_j$ and $X_j\cup \{x\} \equiv_{G,\compl{G}}^{B'} Y_j\cup \{x\}$, for all
		$1\leq j\leq k$. 
		
		Let's now prove (ii). Because $\paths(\bar{X}) = \paths(\bar{Y})$, we need only to look at paths going through $x$. Let
		$P=(z_1,\ldots, z,x,z',\ldots,z_2)$ be a path in $\CG(D[B'],D,\ell_{\bar{X}^i})$ with $z_1\in X_t$, $z_2\in X_l$, $T_1, T,T'$ and $T_2$ the twin-classes
		of, respectively, $z_1,z,z'$ and $z_2$ \wrt $B$, \ie, $(t,\ell_{\bar{X}}(z),T_1,T), (\ell_{\bar{X}}(z'),l,T',T_2) \in \paths(\bar{X})$. Because
		$\paths(\bar{X}) = \paths(\bar{Y})$, $(t,\ell_{\bar{X}}(z),T_1,T), (\ell_{\bar{X}}(z'),l,T',T_2) \in \paths(\bar{Y})$, \ie, there are
		vertices $z_1',y,y',z_2'$ with
		\begin{itemize}
			\item $\ell_{\bar{Y}}(z_1')=t$, $\ell_{\bar{Y}}(z_2')=l$, $\ell_{\bar{Y}}(y)=\ell_{\bar{X}}(z)$, $\ell_{\bar{Y}}(y')= \ell_{\bar{X}}(z')$,
			\item paths $P_1=(z_1',\ldots, y)$ and $P_2=(y',\ldots, z_2')$ in $\CG(G[B],D,\ell_{\bar{Y}})$, and
			\item $y$ and $z$ are twins \wrt $B$, $y'$ and $z'$ are twins \wrt $B$.
		\end{itemize}
		
		Therefore, we can conclude that $(z,x)\in
		\CG(D[B'],D,\ell_{\bar{X}^i})$ (resp. $(x,z')\in \CG(D[B'],D,\ell_{\bar{X}^i})$) if and only if $(y,x)\in \CG(G[B'],D,\ell_{\bar{Y}^i})$
		(resp. $(x,y')\in \CG(G[B'],D,\ell_{\bar{Y}^i})$). We have then proved that if $(t,l,T_1,T_2)\in \CG(D[B'],D,\ell_{\bar{X}^i})$, then
		$(t,l,T_1,T_2)\in \CG(G[B'],D,\ell_{\bar{Y}^i})$. Since the other direction is symmetric, we can conclude (ii). 
		
		Let's now prove (iii). Assume that $\bar{X}^i$ is a letter partition of $G[B']$ over $D$. Since $X_j\equiv_{G,\compl{G}}^B Y_j$, for each $1\leq j\leq k$ and $\bar{Y}$ is a
		letter partition of $G[B]$ over $D$, we can conclude that and 
		
		\begin{itemize}
			\item $X_i\cup \{x\}$ is an independent set (resp. clique) if and only if $Y_i\cup \{x\}$ is, and 
			\item $G[X_i\cup \{x\},X_j]$ is a complete bipartite graph (resp. edgeless) if and only if  $G[Y_i\cup \{x\},Y_j]$ is, for each $j\in
			[k]\setminus \{i\}$.
		\end{itemize}
		
		It remains to check that $\CG(G[B'],D,\ell_{\bar{Y}^i})$ does not contain a circuit. Since $\bar{Y}$ is a letter partition  of $G[B]$ over $D$, any
		circuit in $\CG(G[B'],D,\ell_{\bar{Y}^i})$ contains $x$. So, let $C$ be a circuit in $\CG(G[B'],D,\ell_{\bar{Y}^i})$ and let $z$ and $z'$ be,
		respectively, the in- and out-neighbour of $x$ in $C$. Thus, $(\ell_{\bar{Y}}(z'), \ell_{\bar{Y}}(z), T',T)\in \paths(\bar{Y})$ with $T$ and $T'$
		the twin-classes of, respectively, $z$ and $z'$ with respect to $B$, and so $(\ell_{\bar{Y}}(z'), \ell_{\bar{Y}}(z), T',T)\in \paths(\bar{X})$. Because $x$
		is adjacent (or non-adjacent) to every vertex in $T$, and similarly for $T'$, we can conclude that there is also a circuit in
		$\CG(G[B'],D,\ell_{\bar{X}^i})$, a contradiction.
		
		For the same reasons, $\bar{X}^i$ is a letter partition of $G[B']$ over $D$ if $\bar{Y}^i$, which concludes the proof of (iii).
	\end{proof}

	We will now recall some lemmas that help in upper-bounding the time complexity of the dynamic programming algorithm. The following lemma implies that
	$\lrwd(\compl{G}) \leq \lrwd(G)+1$.
	
	\begin{lemma}\label{lem:folklore} For every graph $G$ and every $X\subseteq V(G)$, $\cutrk_{\compl{G}}(X)\leq \cutrk_G(X)+1$.
	\end{lemma}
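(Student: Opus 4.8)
The plan is to argue directly at the level of the adjacency matrices over the binary field. Recall that $\cutrk_G(X) = \rk(M_G[X,\compl{X}])$, where $M_G$ is the adjacency matrix of $G$ and $\rk$ is the rank function over $\bF_2$. The first step is to identify the relationship between $M_{\compl{G}}[X,\compl{X}]$ and $M_G[X,\compl{X}]$: by the definition of the complement, every entry of the former is the negation of the corresponding entry of the latter, so over $\bF_2$ we have $M_{\compl{G}}[X,\compl{X}] = J + M_G[X,\compl{X}]$, where $J$ denotes the all-ones matrix with rows indexed by $X$ and columns indexed by $\compl{X}$.

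The second step is to apply subadditivity of matrix rank, namely $\rk(A+B) \le \rk(A) + \rk(B)$ for matrices of equal dimensions over any field. Taking $A = M_G[X,\compl{X}]$ and $B = J$ gives
$$
\cutrk_{\compl{G}}(X) = \rk\big(J + M_G[X,\compl{X}]\big) \le \rk\big(M_G[X,\compl{X}]\big) + \rk(J) = \cutrk_G(X) + \rk(J).
$$
Since $J$ is an all-ones matrix, $\rk(J) \le 1$ (it equals $1$ when both $X$ and $\compl{X}$ are non-empty, and $0$ when one of them is empty), which immediately yields the desired inequality.

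There is really no obstacle to speak of here; the lemma is essentially a one-line consequence of subadditivity of rank. The only minor point to keep in mind is that all computations take place over $\bF_2$, so that flipping all entries of a $0/1$ matrix is the same operation as adding the all-ones matrix, and the bound $\rk(J)\le 1$ is over $\bF_2$. In the degenerate case $X = \emptyset$ or $X = V(G)$ both sides are $0$, so one may assume both parts of the cut are non-empty, in which case $\rk(J) = 1$ precisely.
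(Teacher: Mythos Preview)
Your proof is correct and essentially identical to the paper's: both observe that $M_{\compl{G}}[X,\compl{X}]$ is obtained from $M_G[X,\compl{X}]$ by adding the all-ones matrix over $\bF_2$, and then use that this can raise the rank by at most one. The paper phrases this as ``adding the column $(1,1,\ldots,1)^t$ to every column'', while you phrase it via subadditivity of rank with $\rk(J)\le 1$; these are two ways of saying the same thing.
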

	
	\begin{proof} It follows from the fact that $M_{\compl{G}}[X,\compl{X}]$ is obtained from $M_G[X,\compl{X}]$ by adding the column $(1,1,\cdots,1)^t$ to every
		column of $M_{\compl{G}}[X,\compl{X}]$, and this operation can only increase the rank by at most $1$.
	\end{proof}
	
	If $V(G)$ is arbitrarily linearly ordered, we denote by $\rep_G^A(X)$ the lexicographically smallest set $R\subseteq A$
	among all $R\equiv_G^A X$ of minimum size, and denote by $\Rep_G(A)$ the set $\{\rep_G^A(X)\mid X\subseteq A\}$.  In our algorithm, we will need to compute the set of
	representatives for each equivalence class, and for that we will use the following.
	
	\begin{lemma}[\cite{BuiXuanTV2013}]\label{lem:bound-necd} Let $G$ be an $n$-vertex graph. Then, for every $A\subseteq V(G)$,
		\begin{enumerate}
			\item the number of equivalence classes of $\equiv_G^A$ is bounded by $2^{\cutrk_G(A)^2}$,
			\item one can compute in time $O(|\Rep_G(A)| \cdot n^2 \cdot \log(|\Rep_G(A)|))$, the set $\Rep_G(A)$ and a data structure that, given
			a subset $X\subseteq A$, computes $\rep_G^A(X)$ in time $O(|A|\cdot n\cdot \log(|\Rep_G(A)|))$.
		\end{enumerate}
	\end{lemma}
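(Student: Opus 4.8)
The plan is to treat the two assertions separately. For (1) I would first unwind the definition: $X \equiv_G^A Y$ holds exactly when $X$ and $Y$ have the same neighbourhood outside $A$, so the equivalence classes of $\equiv_G^A$ are in bijection with the distinct sets $N_G(X)\setminus A$ with $X\subseteq A$, i.e.\ with the distinct supports of Boolean unions $\bigvee_{x\in X}\rho_x$ of rows of the matrix $M:=M_G[A,\compl A]$, where $\rho_x\in\bF_2^{\,\compl A}$ denotes the row of $M$ indexed by $x$. The naive count — one bit per distinct column of $M$, of which there are at most $2^{\cutrk_G(A)}$ — only yields $2^{2^{\cutrk_G(A)}}$, so the content of (1) is to do substantially better. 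Writing $r:=\cutrk_G(A)$, I would fix rows $a_1,\dots,a_r$ forming an $\bF_2$-basis of the row space of $M$ and, for each $x\in A$, the unique set $S_x\subseteq[r]$ with $\rho_x=\sum_{i\in S_x}\rho_{a_i}$. A one-line computation shows that a column $z\in\compl A$ lies in $\mathrm{supp}(\rho_x)$ iff $|S_x\cap T_z|$ is odd, where $T_z:=\{i:\rho_{a_i}[z]=1\}$; hence $z$ lies in $\bigvee_{x\in X}\rho_x$ iff $\mathbf 1_{T_z}$ is \emph{not} orthogonal over $\bF_2$ to every vector of $\{\mathbf 1_{S_x}:x\in X\}$. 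Consequently the union $\bigvee_{x\in X}\rho_x$, and therefore the $\equiv_G^A$-class of $X$, depends on $X$ only through the subspace $\mathrm{span}_{\bF_2}\{\mathbf 1_{S_x}:x\in X\}\subseteq\bF_2^r$. Since every subspace of $\bF_2^r$ is the span of some $r$-tuple of its vectors, there are at most $(2^r)^r=2^{r^2}$ of them, which gives (1).

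For (2) I would start from the observation that a minimum-size representative $R$ of a class cannot contain an $\bF_2$-linear dependence among its rows: if $\sum_{x\in S}\rho_x=0$ with $\emptyset\ne S\subseteq R$, then for any $x_0\in S$ we have $\mathrm{supp}(\rho_{x_0})\subseteq\bigcup_{x\in S\setminus\{x_0\}}\mathrm{supp}(\rho_x)$, so deleting $x_0$ leaves the Boolean union — hence the class — unchanged. Thus every minimum-size representative, in particular every element of $\Rep_G(A)$, has size at most $r=\cutrk_G(A)$. Next I would use that $\equiv_G^A$ is a congruence for union ($X\equiv_G^A Y$ implies $X\cup Z\equiv_G^A Y\cup Z$, since $N_G(X\cup Z)\setminus A=(N_G(X)\setminus A)\cup(N_G(Z)\setminus A)$): it follows that if $R^\ast$ is a minimum-size representative of size $k\ge 1$, then $R^\ast=\rep_G^A(R''\cup\{x\})$ for a suitable vertex $x$ and a representative $R''$ of size $k-1$. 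Hence $\Rep_G(A)$ can be produced by a breadth-first closure: begin with $\emptyset$, and repeatedly extend each representative already found by a single vertex of $A$, replacing each outcome by the canonical representative of its class; the size bound ensures this terminates after at most $r$ levels. To implement it, keep the representatives in a balanced search tree keyed by their neighbourhood vectors $N_G(\cdot)\setminus A$, which have length at most $n$; an extension step updates one such vector in time $O(n)$ and does one lookup/insertion in time $O(n\log|\Rep_G(A)|)$, there are $O(|\Rep_G(A)|\cdot|A|)$ extension steps in all, and answering a query $\rep_G^A(X)$ reduces to computing $N_G(X)\setminus A$ in time $O(|A|\cdot n)$ followed by one lookup. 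The stated running times then follow, using $|A|\le n$.

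The step I expect to be the main obstacle is the linear-algebraic core of (1): improving the trivial $2^{2^{\cutrk_G(A)}}$ bound to $2^{\cutrk_G(A)^2}$ hinges on recognizing that a Boolean union of rows is governed by the $\bF_2$-span of the coordinate sets $S_x$ rather than by the rows themselves, and getting this invariant right is the one genuinely non-mechanical point. In (2) the remaining subtlety is purely organizational — arranging the breadth-first search so that the representative recorded for each class really is the lexicographically smallest among those of minimum size, and verifying that the update and query operations cost what is claimed — which is careful but routine once the size bound $\le\cutrk_G(A)$ has localized the search.
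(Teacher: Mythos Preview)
The paper does not prove this lemma: it is quoted verbatim from \cite{BuiXuanTV2013} and used as a black box in the appendix, with no argument supplied. So there is no ``paper's own proof'' to compare your proposal against.

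That said, your sketch is a correct reconstruction of the argument behind the cited result. The key observation for (1) --- that the Boolean union $\bigvee_{x\in X}\rho_x$ depends only on the $\bF_2$-span of $\{\mathbf 1_{S_x}:x\in X\}$, because membership of a column $z$ in the union is decided by whether $\mathbf 1_{T_z}$ lies in the orthogonal complement of that span --- is exactly the mechanism that pushes the bound from the trivial $2^{2^{r}}$ down to the number of subspaces of $\bF_2^{\,r}$, which is at most $2^{r^2}$. For (2), your argument that minimum-size representatives have at most $r$ elements (via an $\bF_2$-dependence giving a redundant vertex) and that $\equiv_G^A$ is a congruence for union is sound, and the BFS closure you outline is essentially the standard enumeration procedure. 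The one place to be careful is the point you yourself flag: as written, the closure discovers each class, but to guarantee that the stored representative is the lexicographically smallest of minimum size you must process candidates in an order compatible with that tie-breaking rule (e.g.\ level by level in increasing size, and within a level in lexicographic order of the generated sets), updating a stored representative only when the new candidate is strictly better. This is routine bookkeeping and does not affect the asymptotic bounds.
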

	
	We are now ready to give the algorithm in the following theorem.

	\begin{theorem}\label{thm:recognition1}  Let $k$ be a positive integer.  There is an algorithm that takes as input an $n$-vertex graph $G$ and checks in time
		$2^{O(k^3)}\cdot 2^{O(k^2\cdot 2^{2k})}\cdot n^3$ whether $G$ has lettericity at most $k$, and if so outputs a $k$-letter realisation of $G$.
	\end{theorem}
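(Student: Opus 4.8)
The plan is to fix the decoder and then run a dynamic programming procedure along a vertex ordering of small linear rank-width, relying on \cref{lem:correct-dp} to keep the number of states bounded. First I would run the algorithm of \cref{thm:recognition-lrwd} on $G$: if it certifies $\lrwd(G)>k$, then $\lettericity(G)>k$ by \cref{prop:lettericity-lrw} and we answer NO; otherwise it returns a linear ordering $x_1,\ldots,x_n$ of $V(G)$ of width at most $k$. Write $B_i:=\{x_1,\ldots,x_i\}$, so that $\cutrk_G(B_i)\le k$ and, by \cref{lem:folklore}, $\cutrk_{\compl G}(B_i)\le k+1$, for every $i$. The rest of the algorithm scans the vertices in this order.

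Since $\lettericity(G)\le k$ if and only if $G\in\mathcal{L}_D$ for some decoder $D=(\Sigma,A)$ with $|\Sigma|\le k$, I would loop over all such decoders (at most $2^{k^2}$ of them), re-using the linear order and the auxiliary precomputations below across all iterations. Fix one decoder $D=([k],A)$. By \cref{thm:characterisation}, $G\in\mathcal{L}_D$ if and only if some ordered partition $\bar V=(V_1,\ldots,V_k)$ of $V(G)$ (with possibly empty parts) is a letter partition of $G$ over $D$, and then $(\ell_{\bar V},\pi)$ is a realisation for any topological ordering $\pi$ of $\CG(G,D,\ell_{\bar V})$; crucially this characterisation makes no reference to the vertex order, so we may search for such a $\bar V$ by processing $x_1,\ldots,x_n$. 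At step $i$ we maintain a set $\mathcal{T}_i$ of \emph{signatures} of those ordered partitions $\bar B=(B^1,\ldots,B^k)$ of $B_i$ that are letter partitions of $G[B_i]$ over $D$, where the signature of $\bar B$ records: (a) for each part $B^j$, the equivalence class of $B^j$ under $\equiv_{G,\compl G}^{B_i}$, stored via its canonical representative $\rep$; and (b) the set $\paths(\bar B)\subseteq[k]^2\times\Twin(B_i)^2$. By \cref{lem:correct-dp}, any two partitions with equal signatures behave identically when the next vertex $x_{i+1}$ is added to a part: the resulting signatures agree, and one extension is a letter partition of $G[B_{i+1}]$ if and only if the other is. Hence it suffices to store one representative partition per signature, and the transition $\mathcal{T}_i\to\mathcal{T}_{i+1}$ is well defined: for each signature in $\mathcal{T}_i$ and each of the $k$ choices of a part to receive $x_{i+1}$, test whether the extension is still a letter partition — using the stored data and the adjacencies of $x_{i+1}$, exactly as in the proof of \cref{lem:correct-dp}: conditions (C1)--(C3) for $x_{i+1}$ are pairwise checks, and (C4) fails precisely when $\paths(\bar B)$ witnesses a directed path from an out-neighbour of $x_{i+1}$ back to an in-neighbour of $x_{i+1}$ — and if the extension passes, compute and insert its signature, updating the $\rep$-values with the data structure of \cref{lem:bound-necd} precomputed for every prefix. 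After step $n$, we have $G\in\mathcal{L}_D$ iff $\mathcal{T}_n\neq\emptyset$; if so, backtrack through the $\mathcal{T}_i$ to recover an explicit letter partition $\bar V$ and output $(\ell_{\bar V},\pi)$ from \cref{thm:characterisation}.

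For the running time, \cref{lem:bound-necd} gives that $\equiv_G^{B_i}$ has at most $2^{\cutrk_G(B_i)^2}\le 2^{k^2}$ classes and $\equiv_{\compl G}^{B_i}$ at most $2^{(k+1)^2}$, so their common refinement $\equiv_{G,\compl G}^{B_i}$ has $2^{O(k^2)}$ classes, which yields $2^{O(k^3)}$ possibilities for part (a) of a signature. As $\Twin(B_i)$ indexes the distinct rows of a binary matrix of rank at most $k$, we have $|\Twin(B_i)|\le 2^k$, so part (b) ranges over at most $2^{O(k^2\cdot 2^{2k})}$ possibilities. Thus $|\mathcal{T}_i|\le 2^{O(k^3)}\cdot 2^{O(k^2\cdot 2^{2k})}$; multiplying by the $2^{k^2}$ decoders, the $n$ steps, the $k$ choices per transition, and the polynomial-in-$n$ cost of the $\rep$-queries and adjacency tests (the precomputation of \cref{lem:bound-necd} over all prefixes being amortised within $2^{O(k^2)}n^3$), and adding the $2^{O(k^2)}n^3$ of the first phase, the total is $2^{O(k^3)}\cdot 2^{O(k^2\cdot 2^{2k})}\cdot n^3$, as claimed.

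The conceptual heart of the argument — that a signature of this bounded size records everything the dynamic program needs — is precisely \cref{lem:correct-dp}, which is already in hand. So the main obstacle in writing out the proof is bookkeeping rather than a new idea: pinning down the transition (especially the acyclicity test through $\paths$), checking that empty parts and sub-$k$-letter decoders cause no difficulty, maintaining the $\rep$-data structure across the growing prefixes, and verifying that all of this assembles within the claimed time bound.
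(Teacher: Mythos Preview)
Your proposal is correct and follows essentially the same approach as the paper: both first invoke \cref{thm:recognition-lrwd} and \cref{prop:lettericity-lrw}, then for each decoder run a dynamic program along the prefixes of the linear order, indexing states by the tuple of $\equiv_{G,\compl G}$-representatives of the parts together with $\paths(\bar B)$, with correctness supplied by \cref{lem:correct-dp} and the state count bounded via \cref{lem:bound-necd} and \cref{lem:folklore}. The only cosmetic difference is that the paper stores an explicit witness partition in each table entry rather than speaking of backtracking, but this is the same mechanism.
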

	
	\begin{proof} 
		Let $G$ be an $n$-vertex graph. We first check whether $G$ has linear rank-width at most $k$, and if NO, we can correctly answer that the lettericity of $G$
		is strictly greater than $k$ by Proposition \ref{prop:lettericity-lrw}. We can therefore assume that $G$ has linear rank-width at most $k$ and by Theorem
		\ref{thm:recognition-lrwd} we can compute a linear ordering $x_1,x_2,\ldots,x_n$ of $G$ of width at most $k$ in time $2^{O(k^2)}\cdot n^3$.
		
		Let $D=([k],A)$ be a decoder. By Theorem \ref{thm:characterisation}, $G$ has a $k$-letter realisation over $D$ if and only if $G$ has a letter partition
		$\bar{V}=(V_1,\ldots,V_k)$ over $D$ and for any topological ordering $\pi$ of $\CG(G,D,\ell_{\bar{V}})$, the pair $(\ell_{\bar{V}},\pi)$ is a
		letter realisation of $G$. The dynamic programming algorithm will check the existence of such a letter partition and if yes, computes one.
		
		For each $1\leq i\leq n$, let $A_i=\{x_1,\ldots,x_i\}$ and let $\Rep_{G,\compl{G}}(A_i)$ be the number of equivalence classes of
		$\equiv_{G,\compl{G}}^{A_i}$. For each decoder $D=([k],A)$, $1\leq i \leq n$, and
		$(R_1,\ldots,R_k,\cC)\in (\Rep_{G,\compl{G}}(A_i))^k\times ([k]^2\times (\Twin(A_i))^2)$, let $\mathsf{Sol}_i^D[(R_1,\ldots,R_k,\cC)]$ be the
		set
		\begin{small}
			$$ \{ (B_1,\ldots,B_k)\ \textrm{letter partition of $G[A_i]$ over $D$}\mid B_j\equiv_{G,\compl{G}}^{A_i} R_j,\ \textrm{for all $1\leq j\leq k$ and $\cC = \paths(B_1,\ldots,B_k)$}\},$$
		\end{small}
		
		and let $tab_i^D[(R_1,\ldots,R_k,\cC)]$ be an arbitrary element in $\mathsf{Sol}_i^D[(R_1,\ldots,R_k,\cC)]$.  From the characterisation of
		letter realisations in Theorem \ref{thm:characterisation}, we have that $G$ is a $k$-letter graph if and only if $tab_n^D$ is non-empty for some decoder $D$.
		
		Let us now explain the computation of the tables.  The computation of $tab_1^D$ is trivial, for any decoder $D$, and suppose that we have already computed
		$tab_i$, for some $1\leq i\leq n-1$. The following algorithm computes $tab_{i+1}$
		\begin{enumerate}
			\item Pick a decoder $D$ on $k$-vertices and $(R_1,\ldots,R_k,\cC')\in (\Rep_{G,\compl{G}}(A_i))^k\times ([k]^2\times (\Twin(A_i))^2)$ such that
			$tab_i^D[(R_1,\ldots,R_k,\cC')]=(B_1,\ldots,B_k)$ (\ie, $tab_i^D[(R_1,\ldots,R_k,\cC')]\ne \emptyset$)
			\item For each $1\leq j\leq k$, if $(B_1,\ldots, B_{j-1},B_j\cup \{x_{i+1}\},B_{j+1},\ldots, B_k)$ is a letter partition of $G[A_{i+1}]$ over $D$, then 
			set $tab_{i+1}^D[(S_1,\ldots,S_k,\cC)]$ to  $(B_1,\ldots, B_{j-1},B_j\cup \{x_{i+1}\},B_{j+1},\ldots, B_k)$ where
			\begin{itemize}
				\item $R_t\equiv_{G,\compl{G}}^{A_{i+1}} S_t$ for every $t\in [k]\setminus \{j\}$, and $S_j \equiv_{G,\compl{G}}^{A_{i+1}} R_j\cup \{x_{i+1}\}$,
				\item $\cC=\paths(B_1,\ldots,B_{j-1},B_j\cup \{x_{i+1}\},B_{j+1},\ldots,B_k)$.
			\end{itemize}
		\end{enumerate}
		\noindent
		For the correctness of the computation of $tab_{i+1}^D$, first notice that each tuple set to $tab_{i+1}^D[(S_1,\ldots,S_k,\cC)]$ belongs to
		$\mathsf{Sol}_{i+1}^D[(S_1,\ldots,S_k,\cC)]$. Second, whenever there is a letter partition $(Z_1,\ldots, Z_{j-1},Z_j\cup \cup \{x\}, Z_{j+1},\ldots, Z_k)$
		of $G[A_{i+1}]$ over $D$, which belongs to $\mathsf{Sol}_{i+1}^D[(S_1,\ldots, S_k,\cC)]$, there is $(R_1,\ldots, R_k,\cC')$ such that
		$(Z_1,\ldots, Z_{j-1},Z_j,Z_{j+1},\ldots, Z_k)$ belongs to $\mathsf{Sol}_i^D[(R_1,\ldots, R_k,\cC')]$ and
		\begin{enumerate}
			\item $R_t\equiv_{G,\compl{G}}^{A_{i+1}} S_t$ for every $t\in [k]\setminus \{j\}$, and $S_j \equiv_{G,\compl{G}}^{A_{i+1}} R_j\cup \{x_{i+1}\}$,
			\item $\cC'=\paths(Z_1,\ldots,Z_{j-1},Z_j,Z_{j+1},\ldots,Z_k)$.
		\end{enumerate}
		\noindent
		We can therefore conclude using Lemma \ref{lem:correct-dp} that, after executing the algorithm above, 
		$$tab_{i+1}^D[(S_1,\ldots,S_k,\cC)]\in \mathsf{Sol}_{i+1}^D[(S_1,\ldots,S_k,\cC)]$$ for each $(S_1,\ldots,S_k,\cC)\in
		(\Rep_{G,\compl{G}}(A_{i+1}))^k\times ([k]^2\times (\Twin(A_{i+1}))^2)$. 
		
		\medskip
		Let us now analyse time complexity. Notice first that, $X\equiv_{G,\compl{G}}^{A_i} Y$ whenever
		$(\rep_G^{A_i}(X),\rep_{\compl{G}}^{A_i}(X))=(\rep_G^{A_i}(Y),\rep_{\compl{G}}^{A_i}(Y))$. So, we can assume that the equivalence class of each $X$ \wrt
		$\equiv_{G,\compl{G}}$ is exactly $(\rep_G^{A_i}(X),\rep_{\compl{G}}^{A_i}(X))$.  By Lemmas \ref{lem:bound-necd} and \ref{lem:folklore}, this number of
		equivalence classes is bounded by $2^{O(k^2)}$. So, the number of different entries of each $tab_i^D$ is bounded by
		$2^{O(k^3)}\cdot 2^{O(k^2\cdot 2^{2k})}$.

		By using Lemma \ref{lem:bound-necd} we can first compute $\Rep_{G,\compl{G}}(A_i)$, for all $1\leq i \leq n$, in time $2^{O(k^2)}\cdot n^3$, and be able to
		compute $(\rep_G^{A_i}(X),\rep_{\compl{G}}^{A_i}(X))$ in time $O(n^2\cdot k^2)$, for each $X\subseteq A_i$. The time complexity to compute $tab_{i+1}$ needs
		then $2^{O(k^3)}\cdot 2^{O(k^2\cdot 2^{2k})}\cdot n^2$. We can therefore, compute $tab_n$ in time $2^{O(k^3)}\cdot 2^{O(k^2\cdot 2^{2k})}\cdot n^3$. Notice
		that there is only one equivalence class in $\equiv_{G,\compl{G}}^{A_n}$, which is $(\emptyset,\emptyset)$, and one twin class in $\Twin(A_n)$, represented
		also by $\emptyset$. It takes then time $2^{O(k^2)}$ to decide whether there is some decoder $D$ such that
		$tab_n^D[(\emptyset,\emptyset),\ldots,(\emptyset,\emptyset),\cC]$ is non-empty, and if so, we compute the compatibility graph associated with its entry
		in time $O(k^2\cdot n^2)$ and perform a topological ordering in time $O(n^2)$.
	\end{proof}


\begin{thebibliography}{99}       
		
		\bibitem{AdlerK2013}
		Isolde Adler, Mamadou Moustapha Kant\' e. Linear rank-width and linear clique-width
		of trees.  {\it Theoret. Comput. Sci.} 589 (2015), 87--98.
		
		
		
		\bibitem{AlbertABRV2013}
		Michael H. Albert, M. D. Atkinson, Mathilde Bouvel, Nik Ru\v{s}kuc,  Vincent Vatter,  Geometric grid classes of permutations. {\it Trans. Amer. Math. Soc.} 365 (2013), no. 11, 5859--5881.
		
		\bibitem{AlecuL2021}
		Bogdan Alecu, Vadim Lozin. Understanding lettericity I: a structural hierarchy.
		Tech. rep. https://arxiv.org/abs/2106.03267.
		
		\bibitem{AlecuLWZ2020}
		Bogdan Alecu, Vadim Lozin, Dominique de Werra, Viktor Zamaraev, Letter graphs and geometric grid classes of permutations: Characterization
		and recognition. {\it Discrete Appl. Math.} 283 (2020), pp. 482--494.
		
		\bibitem{AlecuFKLVZ2022}
		Bogdan Alecu, Vadim Lozin, Robert Ferguson, Vince Vatter, Mamadou Moustapha Kant\'{e}, Viktor Zamaraev,  
		Letter graphs and geometric grid classes of permutations. {\it SIAM J. Discrete Math.} 36 (2022),  no. 4,  2774–2797.
		
		\bibitem{braunfeld2023}
		Samuel Braunfeld, 
		Decidability in geometric grid classes of permutations. {\it arXiv preprint arXiv:2308.04201} (2023)
		
		\bibitem{BuiXuanTV2013}
		Binh-Minh Bui-Xuan, Jan Arne Telle, Martin Vatshelle, Fast dynamic programming
		for locally checkable vertex subset and vertex partitioning problems. {\it Theoret. Comput.
			Sci.} 511 (2013), pp. 66--76.
		
		\bibitem{Buchi1960}
		J. Richard B\"{u}chi, Weak second-order arithmetic and finite automata.  {\it Z. Math. Logik
			Grundlagen Math.} 6 (1960), pp. 66--92.
		
		\bibitem{CourcelleK2009}
		Bruno Courcelle and Mamadou Moustapha Kant\'{e}, Graph operations characterizing rankwidth.
		{\it Discrete Appl. Math.} 157.4 (2009),  627--640.
		
		
		\bibitem{CourcelleE2012}
		Bruno Courcelle and Joost Engelfriet. Graph structure and monadic second-order logic.
		Vol. 138. Encyclopedia of Mathematics and its Applications. A language-theoretic approach,
		With a foreword by Maurice Nivat. Cambridge University Press, Cambridge, 2012,
		pp. xiv+728.
		
		\bibitem{CourcelleO2000}
		Bruno Courcelle, Stephan Olariu, Upper bounds to the clique width of graphs. 
		{\it Discrete Appl. Math.} 101 (2000), 77--114.
		
		\bibitem{Diestel2005}
		Reinhard Diestel,  Graph theory. Third edition. Graduate Texts in Mathematics, 173. {\it Springer-Verlag, Berlin}, 2005. xvi+411 pp.
  
\bibitem{CW}
M.R. Fellows, F.A. Rosamond, U. Rotics, S. Szeider, 
Clique-width is NP-complete. {\it SIAM J. Discrete Math.} 23 (2009), no. 2, 909--939.

\bibitem{LCW3}
P. Heggernes, D. Meister, C. Papadopoulos,  Graphs of linear clique-width at most 3. 
{\it Theoret. Comput. Sci.} 412 (2011), no. 39, 5466--5486.
  
		\bibitem{JeongKO17}
		Jisu Jeong, Eun Jung Kim, Sang-il Oum. The ``Art of Trellis Decoding'' Is Fixed-Parameter Tractable. {\it IEEE Trans. Information Theory} 63.11 (2017),  7178--7205.
		
		\bibitem{KwonMcOW2019}
		O-joung Kwon, Rose McCarty,   Sang-il Oum, Paul Wollan. 
		Obstructions for bounded shrub-depth and rank-depth.
		{\it J. Combin. Theory Ser. B} 149 (2021), 76--91.
		
		\bibitem{OumS2006}
		Sang-il Oum, Paul Seymour. 
		Approximating clique-width and branch-width.  
		{\it J. Combin. Theory Ser. B} 96.4 (2006),  514--528.
		
		
		\bibitem{Sip96}
		Michael Sipser, 
		{\it Introduction to the Theory of Computation}. 
		ACM Sigact News 27.1 (1996): 27--29.
		
		
		\bibitem{Petkovsek2002}
		M. Petkov\v sek, Letter graphs and well-quasi-order by induced subgraphs. {\it Discrete Math.} 244 (2002), 375--388.
  
        \bibitem{Parikh}
        W. C. Teh, Z. C. Ng, M. Javaid, and Z. J. Chern, Parikh word representability of 
        bipartite permutation graphs, {\it Discrete Appl. Math.} 282 (2020), 208--221.
		
	\end{thebibliography}
\end{document}